\newcommand{\R}{\ensuremath{\mathbb{R}}}
\newcommand{\N}{\ensuremath{\mathbb{N}}}
\newcommand{\Q}{\ensuremath{\mathbb{Q}}}
\newcommand{\Z}{\ensuremath{\mathbb{Z}}}
\newcommand{\s}{\Sigma}
\newcommand{\la}{\lambda}
\newcommand{\e}{\varepsilon}
\newcommand{\V}{\mathcal{V}}
\newcommand{\Cr}{\mathcal{C}^{r}}
\newcommand{\Xr}{\chi^{r}}
\newcommand{\Or}{\Omega^{r}}
\newcommand{\rn}[1]{\mathbb{R}^{#1}}
\newcommand{\U}{\mathcal{U}}
\newcommand{\er}{\mathcal{O}}
\newcommand{\p}{\varphi}
\newcommand{\ag}{\alpha}
\newcommand{\bg}{\beta}
\newcommand{\cg}{\gamma}
\newcommand{\dg}{\delta}
\newcommand{\sgn}{\textrm{sgn}}
\newcommand{\fix}{\textrm{Fix}}
\newtheorem {theorem} {Theorem} 
\newtheorem {proposition} {Proposition}
\newtheorem {corollary} {Corollary}
\newtheorem {lemma} {Lemma}
\newtheorem {definition} {Definition}
\newtheorem {remark} {Remark}
\newtheorem*{thmA}{Theorem A}
\newtheorem*{thmB}{Theorem B}
\newtheorem*{thmC}{Theorem C}
\newtheorem*{thmD}{Theorem D}
\newtheorem*{thmE}{Corollary E}
\definecolor{verde}{rgb}{0.0,0.5,0.0}
\definecolor{azul}{rgb}{0,0,128}
\definecolor{roxo}{rgb}{0.44,0.16,0.39}
\definecolor{vinho}{rgb}{0.5,0.0,0.13}
\definecolor{lilas1}{rgb}{0.6,0.33,0.73}
\definecolor{rosa}{rgb}{0.84,0.04,0.33}
\definecolor{mostarda}{rgb}{0.91,0.41,0.17}
\definecolor{mostarda2}{rgb}{1.0,0.66,0.07}
\begin{document}

\title[Generic Singularities of 3D Piecewise Smooth Dynamical Systems]{Generic Singularities of 3D Piecewise Smooth Dynamical Systems}

%

\author{Ot\'avio M. L. Gomide}
\address[OMLG]{Department of Mathematics, Unicamp, IMECC\\ Campinas-SP, 13083-970, Brazil}
\email{otaviomleandro@hotmail.com}

\author{Marco A. Teixeira}
\address[MAT]{Department of Mathematics, Unicamp, IMECC\\ Campinas-SP, 13083-970, Brazil}
\email{teixeira@ime.unicamp.br}


\maketitle

\begin{abstract}
The aim of this paper is to provide  a discussion on current directions of research involving typical singularities of $3D$ nonsmooth vector fields. A brief survey of known results is presented.\newline\indent
The main purpose of this work is to describe  the dynamical features of a fold-fold singularity in its most basic form and to give a complete and detailed proof of its local structural stability (or instability). In addition, classes of all topological types of a fold-fold singularity are intrinsically characterized. Such proof essentially follows firstly from some lines laid out by Colombo, Garc\'ia, Jeffrey, Teixeira and others and secondly offers a rigorous mathematical treatment under clear and crisp assumptions and solid arguments.\newline\indent
One should to highlight that the geometric-topological methods employed lead us to the completely mathematical understanding of the dynamics around a T-singularity. This approach lends itself to applications in generic bifurcation theory.  It is worth to say that such subject is still poorly understood in higher dimension.
\end{abstract}

\section{Introduction}
\label{sec:1}

\indent Certain aspects of the theory of nonsmooth vector fields has been mainly motivated by the study of vector fields near the boundary of a manifold. Concerning this topic, many authors provided results and techniques which have been very useful in piecewise-smooth systems. It is worthwhile to cite in the 2-dimensional case works from Andronov et al, Peixoto, Teixeira (see \cite{ALGM, PP,T1})  and in higher dimensions the works from Sotomayor and Teixeira, Vishik and Percell (See \cite{ST,V, PE}). In particular,
in \cite{V} (1972), Vishik provided a classification of generic points lying in the boundary of a manifold, through techniques from Theory of Singularities.

Many papers have contributed to the analysis and generic classification of singularities of 2D Filippov systems (Kuznetsov et al, Guardia et al, Kozlova among others, see \cite{GTS,K,KU}). Specifically with respect to the fold-fold singularity we point Ekeland (See \cite{E}) and Teixeira (See \cite{T5}). Regarding the $n$-dimensional problem, we point out the work from Colombo and Jeffrey (see \cite{JC1}) which analyzes an $n$-dimensional family having a two-fold singularity, nevertheless the generic classification for $n>2$ is much more complicated and still poorly understood.

As far as we know, the first approach where a generic 3D fold-fold singularity was studied was offered by Teixeira in \cite{T4} (1981) where one finds a discussion on some features of the first return mapping that occurs around this singularity. Maybe due to this fact, the invisible fold-fold singularity is known as T-singularity.

In \cite{F} (1988), Filippov provided a mathematical formalization of the theory of nonsmooth vector fields. In the last chapter of \cite{F}, Filippov studied generic singularities in $3D$ nonsmooth systems, and a systematic mathematical analysis of the behavior around a fold-fold singularity was officially arisen. However, most of proofs were only roughly sketched and would require a better explanation and interpretation. In particular, the proofs of the results concerning the fold-fold singularity were obscure and unfinished. Many works appeared lately trying to explain it (See \cite{J1,J3,J2,Ponce,T2}).

In \cite{T2}, Teixeira established necessary conditions for the structural stability of the fold-fold singularity and he proved that it is not a generic property. Nevertheless, the case of the invisible fold-fold point having a hyperbolic first return map was not understood. He also provided results concerning asymptotic stability.

In \cite{J1,J3,J2}, Jeffrey et al also studied the problem of the classification of the structural stability around a fold-fold singularity. More specifically, in \cite{J2}, the authors studied the behavior of a 2-parameter semi-linear model $Z_{\ag,\bg}$ having a T-singularity at $Z_{0,0}$. By studying the first return map explicitly, they have found countably many curves $\gamma_{k}$ in a region of the parameter space, where the topological type $\beta_{k}$ of a system in $\gamma_{k}$ satisfies $\beta_k \neq \beta_l$ provided $k \neq l$. Moreover, they predict the existence of classes of structural stability between the curves $\gamma_{k}$ in this region. Guided by these results, we show that in the considered region of the parameter space, a general Filippov system $Z$ having a T-singularity at $p$ always has a first return map with complex eigenvalues. It brings several consequences to the behavior of $Z$ around $p$, in particular, it produces a foliation of this region in the parameter space depending on the argument of the eigenvalues of $Z$ such that, two systems in different leaves are not topologically equivalent near the T-singularity, which means that there is no class of stability in this region of parameters. It provides a negative answer to the questions raised in \cite{J2} concerning the validity of the results for general Filippov systems around a T-singularity.

A 3D-fold-fold singularity is an intriguing phenomenon that has no counterparts in smooth systems, and the complete characterization of the local structural stability of a 3D-nonsmooth system around an elliptic fold-fold singularity has been an open problem over the last 30 years. In this work, we believe that all mathematical existing gaps were filled up and the precise statement of results and proofs were well established.

It is worth to mention that the methods and techniques used in this paper provide a solution from a geometric-topological point of view. In addition, we present a generic and qualitative  characterization of a fold-fold singularity, in order to clarify any fact concerning the generality of the results.

\section{Setting the Problem} \label{prel}
In what follows we summarize a rough overall description of the basic concepts and results in order to set the problem.

\subsection{Filippov Systems}

For simplicity, let $M$ be a connected bounded region of $\rn{3}$ and let $f:M\rightarrow \rn{}$ be a smooth function having $0$ as a regular value, therefore $\Sigma=f^{-1}(0)$ is a compact embedded codimension one submanifold of $M$ which splits it in the sets $M^{\pm}=\{p\in M; \pm f(p)>0\}$.

Denote the set of germs of vector fields of class $\Cr$ at $\s$ by $\Xr$. Endow $\Xr$ with the $\Cr$ topology and consider $\Or=\Xr\times \Xr$ with the product topology.

If $Z=(X,Y)\in\Or$ then, a \textbf{nonsmooth vector field} is defined in some neighborhood $V$ of $\s$ in $M$ as follows:

\begin{equation}
Z(p)=F(p)+\sgn(f(p)) G(p),
\end{equation}
where $F(p)=\frac{X(p)+Y(p)}{2}$ and $G(p)=\frac{X(p)-Y(p)}{2}$.
\begin{definition}
	The \textbf{Lie derivative} of $f$ in the direction of the vector field $X\in \Xr$ at $p\in\Sigma$ is defined by $Xf(p)=X(p)\cdot \nabla f(p)$. The \textbf{tangency set} of $X$ with $\s$ is given by $S_{X}=\{p\in\Sigma;\ Xf(p)=0\}$.
\end{definition}

If $X_{1},\cdots, X_{n}\in \Xr$, the higher order Lie derivatives are defined as:
$$X_{n}\cdots X_{1}f(p)=X_{n}(X_{n-1}\cdots X_{1}f)(p),$$
i.e. $X_{n}\cdots X_{1}f(p)$ is the Lie derivative of the smooth function $X_{n-1}\cdots X_{1}f$ in the direction of the vector field $X_{n}$ at $p$. In particular, $X^{n}f(p)$ denotes the Lie derivative $X_{n}\cdots X_{1}f(p)$, where $X_{i}=X$, for $i=1,\cdots,n$.

If $Z=(X,Y)\in\Or$, then the switching manifold $\Sigma$ generically splits into three distinct open regions:
\begin{itemize}
	\item Crossing Region: $\Sigma^{c}=\{p\in \Sigma;\ Xf(p)Yf(p)>0\};$
	\item Stable Sliding Region: $\Sigma^{ss}=\{p\in \Sigma;\ Xf(p)<0,\ Yf(p)>0\};$
	\item Unstable Sliding Region: $\Sigma^{us}=\{p\in \Sigma;\ Xf(p)>0,\ Yf(p)<0\}.$			
\end{itemize}	

Consider the \textbf{sliding region} of $Z$ as $\s^{s}=\s^{ss}\cup \s^{us}$.

\begin{figure}[H]
	\centering
	\bigskip
	\begin{overpic}[width=10cm]{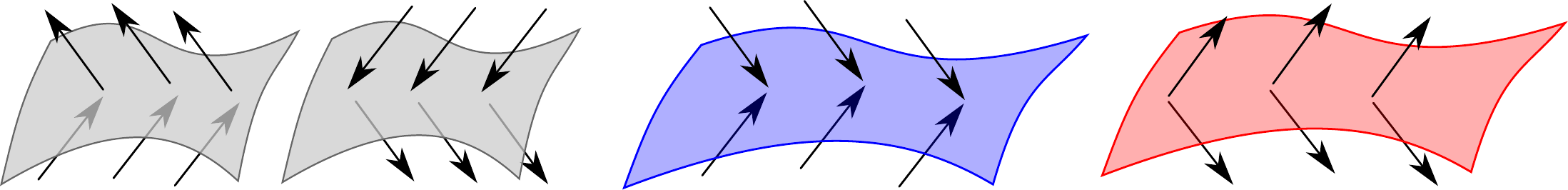}
		\put(16,-2){{\footnotesize $(a)$}}		
		\put(52,-2){{\footnotesize $(b)$}}		
		\put(83,-2){{\footnotesize $(c)$}}	
		\put(-3,5){{\footnotesize $\s$}}	
		\put(5,13){{\footnotesize $X$}}	
		\put(4,-1){{\footnotesize $Y$}}							
		
	\end{overpic}
	\bigskip
	\caption{Regions in $\s$: $\s^{c}$ in $(a)$, $\s^{ss}$ in $(b)$ and $\s^{us}$ in $(c)$.   }	
\end{figure} 	

The tangency set of $Z$ will be referred as $S_{Z}=S_{X}\cup S_{Y}$. Notice that $\s$ is the disjoint union $\s^{c}\cup \s^{ss}\cup \s^{us}\cup S_{Z}$.

The concept of solution of $Z$ follows Filippov's convention. More details can be found in \cite{F,GTS,T8}.

We highlight that the local solution of $Z=(X,Y)\in \Or$ at a point $p\in \Sigma^{s}$ is given by the \textbf{sliding vector field}:
\begin{equation}
F_{Z}(p)=\frac{1}{Yf(p)-Xf(p)}\left(Yf(p)X(p)-Xf(p)Y(p)\right).
\end{equation}

\begin{remark}
	Notice that $F_{Z}$ is a vector field tangent to $\s^{s}$. The singularities of $F_{Z}$ in $\s^{s}$ are called \textbf{pseudo-equilibria} of $Z$.
\end{remark}

\begin{definition}\label{NSVF}
	If $p\in\s^{s}$, the \textbf{normalized sliding vector field} is defined by:
	\begin{equation}
	F_{Z}^{N}(p)=Y f(p)X (p)-X f(p)Y(p).
	\end{equation}	
\end{definition}

\begin{remark}
	If $R$ is a connected component of $\s^{ss}$, then $F_{Z}^{N}$ is a re-parameterization of $F_{Z}$ in $R$, then they have exactly the same phase portrait. If $R$ is a connected component of $\s^{us}$, then $F_{Z}^{N}$ is a (negative) re-parameterization of $F_{Z}$ in $R$, then they have the same phase portrait, but the orbits are oriented in opposite direction.
\end{remark}

If $Z=(X,Y)\in\Or$, consider all the integral curves of $X$ in $M^{+}$, all the integral curves of $Y$ in $M^{-}$ and the integral curves of $F_{Z}$ in $\s^{s}$. In this work, any oriented piecewise-smooth curve passing through $q$ is considered as a solution of $Z$ through $q$. 

\subsection{$\s$-Equivalence}

An orbital equivalence relation is defined in $\Or(M)$ as follows:

\begin{definition}\label{equivalencedef}
	Let $Z_{0},Z\in\Or$ be two germs of nonsmooth vector fields. We say that $Z_{0}$ is \textbf{topologically equivalent} to $Z$ at $p$ if there exist neighborhoods $U$ and $V$ of $p$ in $M$ and an order-preserving homeomorphism $h:U\rightarrow V$ such that it carries orbits of $Z_{0}$ onto orbits of $Z$, and it preserves $\s$, i.e. $h(\s\cap U)=\s\cap V$.
\end{definition}

The concept of local structural stability at a point $p\in\s$ is defined in the natural way.

\begin{definition}\label{sigmalss}
	$Z_{0}\in\Or$ is said to be \textbf{$\s$-locally structurally stable} if $Z_{0}$ is locally structurally stable at $p$, for each $p\in\s$.
	
	Denote the space of germs of nonsmooth vector fields $Z\in\Or$ which are $\s$-locally structurally stable by $\s_{0}$.
\end{definition}

\subsection{Reversible mappings}
The concepts in this section will be used in the sequel.

\begin{definition}
	A germ of \textbf{involution} at $0$ is a $\Cr$ germ of diffeomorphism $\varphi: \rn{2}\rightarrow \rn{2}$ such that $\varphi(0)=0$, $\varphi^{2}(x,y)=(x,y)$ and $\det[\varphi'(0,0)]=-1$.
\end{definition}

The set of germs of involutions at $0$ is denoted by $I^{r}$ and it is endowed with the $\Cr$ topology. Consider $W^{r}=I^{r}\times I^{r}$ endowed with the product topology.

\begin{definition}
	Let $\p=(\p_{0},\p_{1}),\ \psi=(\psi_{0},\psi_{1})\in W^{r}$ be two pairs of involutions at $0$. Then $\p$ and $\psi$ are said to be \textbf{topologically equivalent} at $0$ if there exists a germ of homeomorphism $h:(\rn{2},0)\rightarrow (\rn{2},0)$ which satisfies $h\p_{0}=\psi_{0}h$ and $h\p_{1}=\psi_{1}h$, simultaneously.
\end{definition}

The local structural stability of a pair of involutions in $W^{r}$ is defined in the natural way. The proof of the next theorem can be found in \cite{T4} such as more details about involutions.

\begin{theorem}\label{ssinv}
	A pair of involutions $(\p,\psi)$ is locally and simultaneous structurally stable at $0$ if and only if $0$ is a hyperbolic fixed point of the composition $\p\circ\psi$. Moreover, the structural stability in the space of pairs of involutions is not a generic property.
\end{theorem}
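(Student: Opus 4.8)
The plan is to analyze the dynamics of the composition $\p\circ\psi$ and relate its structural stability as a pair of involutions to the hyperbolicity of the fixed point $0$. First I would observe that since each involution $\p_0,\p_1$ is a germ of diffeomorphism fixing $0$ with $\det[\p_i'(0)]=-1$, the composition $\p\circ\psi$ (meaning the composition $\p_0\circ\psi_0$, or more precisely the group generated by the two involutions) is an orientation-preserving germ of diffeomorphism fixing $0$, so $\det[(\p\circ\psi)'(0)]=1$. The key structural object is this first-return-type composition $\Phi=\p\circ\psi$, whose linear part $\Phi'(0)$ is a $2\times 2$ matrix with determinant $1$. Such a matrix is hyperbolic precisely when its eigenvalues are real and off the unit circle, i.e. when $|\mathrm{tr}\,\Phi'(0)|>2$; the non-hyperbolic cases are the elliptic one ($|\mathrm{tr}|<2$, eigenvalues on the unit circle) and the parabolic boundary ($|\mathrm{tr}|=2$).

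Next I would establish the sufficiency direction: if $0$ is a hyperbolic fixed point of $\Phi$, then the pair $(\p,\psi)$ is locally structurally stable. The strategy here is linearization. By the Hartman--Grobman-type rigidity for such reversible systems, a hyperbolic $\Phi'(0)$ with determinant $1$ has eigenvalues $\la,\la^{-1}$ with $|\la|\neq 1$, and one can conjugate $\Phi$ to its linear part. More importantly, because $\Phi$ is a composition of two involutions, one can realize a simultaneous conjugacy that respects \emph{both} involutions: the fixed-point sets of $\p_0$ and $\psi_0$ are curves through the origin meeting transversally (by the determinant-$(-1)$ condition), and a hyperbolic saddle structure pins down the separatrices, so a perturbed pair $(\tilde\p,\tilde\psi)$ close to $(\p,\psi)$ still has a hyperbolic composition and can be matched by a homeomorphism $h$ conjugating the two fixed-point curves and the saddle dynamics simultaneously. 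This yields the $h$ with $h\p_0=\tilde\p_0 h$ and $h\p_1=\tilde\p_1 h$.

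For the necessity direction, I would show that if $0$ is \emph{not} hyperbolic then structural stability fails. In the elliptic case the eigenvalues lie on the unit circle, so $\Phi'(0)$ is a rotation by some angle $\T$; the topological type of the dynamics near $0$ then depends on arithmetic properties of $\T$ (rational versus irrational, and the rotation number itself), and arbitrarily small perturbations change $\T$, hence change the conjugacy class, so the pair cannot be structurally stable. The parabolic case $|\mathrm{tr}|=2$ is the degenerate boundary and is likewise unstable since perturbations push the trace to either side. Finally, to prove the non-genericity assertion I would exhibit that the elliptic set is open and nonempty in $W^r$: the condition $|\mathrm{tr}\,\Phi'(0)|<2$ is an open condition cutting out a nonempty open set of pairs of involutions on which structural stability always fails, so $\s$-stable pairs do not form a residual (dense) set. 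The main obstacle I anticipate is the sufficiency direction's \emph{simultaneous} conjugacy: producing a single homeomorphism that conjugates both involutions at once, rather than just the composition, requires careful control of how the homeomorphism behaves on the two transversal fixed-point curves and their images under the saddle dynamics, and this is where the bulk of the geometric work lies.
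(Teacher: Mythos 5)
The paper itself does not prove this theorem; it defers to Teixeira \cite{T4}, so your proposal must be measured against that standard argument, whose machinery the paper reproduces elsewhere. Your necessity and non-genericity halves are essentially right and match the paper's own technique: elliptic compositions ($|\mathrm{tr}|<2$, eigenvalues forced onto the unit circle since $\det\Phi'(0)=1$) fill a nonempty open subset of $W^{r}$, and the rotation number is a continuous invariant whose rational/irrational dichotomy distinguishes topological types under arbitrarily small perturbations --- this is precisely the blow-up argument of Proposition \ref{NH-unst}, where the invariant is realized as the topology of $\bigcup_{n}\Phi^{n}(\fix(\p))$. One slip, though: transversality of $\fix(\p)$ and $\fix(\psi)$ at $0$ does \emph{not} follow from the determinant-$(-1)$ condition. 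If the two fixed curves were tangent at $0$, both linear parts would fix the common tangent direction, so $\Phi'(0)$ would have eigenvalue $1$ and trace $2$; transversality is therefore a consequence of the hyperbolicity hypothesis, not of involutivity.

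The genuine gap is in the sufficiency direction, exactly where you admit "the bulk of the geometric work lies," and your sketch of it would fail as stated. Saying that "a hyperbolic saddle structure pins down the separatrices, so a perturbed pair can be matched" conflates conjugating the compositions with conjugating the pairs: a Hartman--Grobman conjugacy between $\Phi=\p\circ\psi$ and $\tilde\Phi=\tilde\p\circ\tilde\psi$ need not carry $\fix(\p)$ to $\fix(\tilde\p)$, and conjugacies of a saddle \emph{together with distinguished transversal curves through it} are precisely where the deMelo--Palis modulus $P(\Phi)=\log|\la|/\log|\mu|$ obstructs stability (see \cite{BT,M,P}); generic saddle-plus-curve configurations are locally unstable. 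What rescues the theorem --- and what your plan never isolates --- is that every composition of two involutions has $\det\Phi'(0)=1$, hence $\mu=\la^{-1}$ and $P(\Phi)=-1$ identically on $W^{r}$, so the modulus matches automatically for any perturbed pair. The actual construction then proceeds by linearizing and building $h$ on a fundamental domain bounded by $\fix(\p)$ and $\Phi^{-1}(\fix(\p))$, propagating it by iterates of $\Phi$ using the reversibility identities $\p\circ\Phi^{n}=\Phi^{-n}\circ\p$ (Lemma \ref{comut}) and $\p(W^{u})\subset W^{s}$ (Proposition \ref{reversibility}), and extending continuously to $W^{u}\cup W^{s}$ by a limit whose convergence hinges exactly on $P(\phi_{0})=-1=P(\phi)$ --- this is the computation the paper carries out in Step 3 of Proposition \ref{sela-prop} for the T-singularity. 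Without identifying the fixed curves as moduli-generating data and the determinant constraint as the reason the modulus is constant, your sufficiency argument has a hole that no amount of saddle rigidity alone can close.
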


\section{Statement of the main results}

Define the following subsets of $\Or$:

\begin{itemize}
	\item $\s(G)$: $Z\in\Or$ such that each point $p\in\s$ is either a tangential singularity or a regular-regular point.
	
	\item $\s(R)$: $Z\in\Or$ such that for each regular-regular point $p\in\s$ of $Z$ we have either $p\in\s^{c}$ or $p\in\s^{s}$ and, in the second case, p is either a regular point or a hyperbolic singularity of $F_{Z}$;
	
	\item $\s(H)$: $Z\in\Or$ such that for each visible fold-fold point $p\in\s$, the normalized sliding vector field $F_{Z}^{N}$ has no center manifold in $\s^{s}$.
	
	\item $\s(P)$: $Z\in\Or$ such that for each invisible-visible point $p\in\s$, the normalized sliding vector field $F_{Z}^{N}$ is either transient in $\s^{s}$ or it has a hyperbolic singularity at $p$. Moreover, if $\phi_{X}$ is the involution associated to $Z$ then it satisfies:
	
	\begin{enumerate}
		\item $\phi_{X}(S_{Y})\pitchfork S_{Y}$ at $p$;
		
		\item $F_{Z}^{N}$ and $\phi_{X}^{*}F_{Z}^{N}$ are transversal at each point of $\s^{ss}\cap\phi_{X}(\s^{us})$;
		
		\item $\phi_{X}(S_{Y})\pitchfork F_{Z}^{N}$ in a neighborhood of $p$.
	\end{enumerate} 
	
	\item $\s(E)$:  $Z\in\Or$ such that for each T-singularity $p\in\s$, the first return map $\phi_{Z}$ associated to $Z$ has a fixed point at $p$ of type saddle with both local invariant manifolds $W^{u,s}_{loc}$ contained in $\s^{c}$.
\end{itemize}

\begin{remark}
	If $Z$ has a visible-invisible fold-fold singularity at $p$, then the roles of $X$ and $Y$ in the condition $\s(P)$ are interchanged.
\end{remark}


The main result of this work is the following theorem.

\begin{thmA}\label{thmA}
	$Z\in\Or$ is locally structurally stable at a T-singularity $p$ if and only if it satisfies condition $\s(E)$ at $p$.
\end{thmA}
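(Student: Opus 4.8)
The plan is to reduce the three-dimensional problem to the theory of pairs of involutions and then to lift the resulting planar equivalence back to a neighborhood of $p$ in $M$. Near a T-singularity both $X$ and $Y$ exhibit invisible folds along transversal curves $S_X,S_Y\subset\s$, so to each fold I associate its fold involution $\phi_X,\phi_Y:(\s,p)\to(\s,p)$, characterized by $\fix(\phi_X)=S_X$ and $\fix(\phi_Y)=S_Y$, whose orbits are the traces on $\s$ of the $X$- (resp. $Y$-) orbits in $M^{+}$ (resp. $M^{-}$). The first return map is then $\phi_Z=\phi_X\circ\phi_Y$ up to relabeling. The first observation I would record is purely algebraic: since each involution satisfies $\det\phi_X'(p)=\det\phi_Y'(p)=-1$, the composition has $\det\phi_Z'(p)=1$; hence a hyperbolic fixed point of $\phi_Z$ must have real eigenvalues $\la,\la^{-1}$ with $|\la|\neq1$, because a determinant-one planar germ with non-real eigenvalues is elliptic (its eigenvalues lie on the unit circle) and thus never hyperbolic. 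This identifies the ``saddle'' requirement in $\s(E)$ with the hyperbolicity of $\phi_Z$, and makes Theorem \ref{ssinv} directly applicable.

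For sufficiency, assume $\s(E)$. Theorem \ref{ssinv} then yields the local structural stability of the pair $(\phi_X,\phi_Y)$: for every $\tilde Z$ close to $Z$ there is a germ of homeomorphism $h_\s:(\s,p)\to(\s,p)$ simultaneously conjugating the two fold involutions to those of $\tilde Z$. The requirement $W^{u,s}_{loc}\subset\s^{c}$ guarantees that $h_\s$ respects the decomposition $\s=\s^{c}\cup\s^{s}\cup S_Z$ and carries $F_Z^{N}$ to $F_{\tilde Z}^{N}$ up to orbital equivalence, since it forces the separatrices of the saddle to be realized by genuine crossing orbits rather than being interrupted by $\s^{s}$. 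I would then extend $h_\s$ off $\s$ by transporting it along the flow of $X$ in $M^{+}$, along the flow of $Y$ in $M^{-}$, and along $F_Z$ inside $\s^{s}$, obtaining a candidate homeomorphism $h:U\to V$; the verification that $h$ is continuous across $\s$, carries orbits to orbits, and preserves orientation then completes this direction.

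For necessity I would argue by contraposition. If the fixed point of $\phi_Z$ fails to be hyperbolic, Theorem \ref{ssinv} supplies a pair of involutions arbitrarily close to $(\phi_X,\phi_Y)$ that is not equivalent to it on $\s$; realizing such a pair as the fold involutions of a nearby field $\tilde Z$ and pushing the planar obstruction into $M$ shows $Z\not\sim\tilde Z$. In the elliptic regime (non-real eigenvalues of $\phi_Z$) this is sharp: the argument $\T$ of the eigenvalues is a topological invariant of the return dynamics, so an entire one-parameter family of pairwise non-equivalent systems accumulates at $Z$, which is exactly the mechanism underlying the curves $\gamma_k$ of Jeffrey et al.\ discussed in the introduction. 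Finally, if $\phi_Z$ is a saddle but some branch of $W^{u,s}_{loc}$ meets $\s^{s}$, an arbitrarily small perturbation alters how that separatrix connects to the sliding region, changing the topological type and breaking stability.

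The step I expect to be the main obstacle is the extension in the sufficiency part: producing from the purely two-dimensional equivalence $h_\s$ on $\s$ a genuine topological equivalence on a full three-dimensional neighborhood of $p$. The difficulty is to glue the three separately defined pieces --- the flow-box in $M^{+}$, the flow-box in $M^{-}$, and the sliding flow on $\s^{s}$ --- into a single homeomorphism that remains continuous up to and across the tangency set $S_Z$ and along the saddle separatrices, precisely where the crossing and sliding regions meet and where the transport maps degenerate.
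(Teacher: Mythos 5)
Your reduction to the pair of fold involutions is the right starting point, and your determinant observation ($\det d\phi_{Z}(p)=1$, so hyperbolicity forces real eigenvalues $\la,\la^{-1}$ and a saddle) is correct and consistent with Lemma \ref{posinv}. But the sufficiency direction has a genuine gap: you are implicitly invoking the converse of Lemma \ref{rel} --- that simultaneous structural stability of $(\phi_{X},\phi_{Y})$ plus the condition $W^{u,s}_{loc}\subset\s^{c}$ yields stability of $Z$ --- and in particular you assert that the conjugacy $h_{\s}$ furnished by Theorem \ref{ssinv} ``carries $F_{Z}^{N}$ to $F_{\tilde Z}^{N}$ up to orbital equivalence''. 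Nothing forces this: the sliding vector field is an independent datum, not determined by the two involutions, and a homeomorphism conjugating the involutions has no reason to respect sliding orbits. The decisive test is the regime where $\phi_{Z}$ is a saddle but an invariant manifold lies in $\s^{s}$ (normal parameters with $\ag\bg\geq\cg$ and $\ag,\bg>0$, or $\ag\bg<0$): there the pair of involutions is \emph{still} structurally stable by Theorem \ref{ssinv}, so your argument would prove local structural stability of $Z$ verbatim --- your use of $W^{u,s}_{loc}\subset\s^{c}$ is only a bookkeeping remark, never a load-bearing step --- yet Proposition \ref{webproof} shows such $Z$ is unstable: the push-forwards $F_{i}=(\phi_{0}^{2i})^{*}F_{0}$ of the sliding field under the return map form a $3$-web of foliations near the invariant manifold, whose equivalence class is a modulus broken by arbitrarily small perturbations. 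This same point invalidates your necessity argument in that case: you attribute instability to ``how the separatrix connects to the sliding region'', but that configuration is robust (the manifold stays in the open region $\s^{s}$ and transversality to the sliding flow persists under perturbation); the actual obstruction is the web, not a broken connection.

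The paper's sufficiency proof also runs in the opposite order from yours and rests on a fact you never invoke. One first builds $h$ on the sliding region as an equivalence of the sliding flows (Lemma \ref{fund}) --- possible because the invariant diabolo of Proposition \ref{diabolo-prop} isolates $\s^{ss}$ from $\s^{us}$ --- and only then extends this \emph{prescribed} $h$ to a conjugacy of the first return maps on the crossing region (Proposition \ref{sela-prop}). The extension across $W^{u}$ is exactly where a general saddle would meet the deMelo--Palis modulus; it succeeds here only because any composition of two involutions satisfies $\la\mu=1$, hence $P(\phi)=\log|\la|/\log|\mu|=-1$ identically, so $P(\phi_{0})=P(\phi)$ automatically and the limit defining $h$ on $W^{u}$ converges. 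Without this step your plan of extending $h_{\s}$ by the flows cannot be completed: the conjugacy from Theorem \ref{ssinv} and the sliding-flow equivalence are two different maps on overlapping regions, and producing a single homeomorphism compatible with both is precisely the content the proof must supply.
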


The following theorem is proved in \cite{F,J1} and a detailed proof clarifying some obscure points is exhibited. 
\begin{thmB}\label{thmB}
	\begin{enumerate}
		\item[$i)$] $Z\in\Or$ is locally structurally stable at a hyperbolic fold-fold singularity $p$ if and only if it satisfies condition $\s(H)$ at $p$.
		\item[$ii)$] $Z\in\Or$ is locally structurally stable at a parabolic fold-fold singularity $p$ if and only if it satisfies condition $\s(P)$ at $p$.
	\end{enumerate}
\end{thmB}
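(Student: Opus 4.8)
The plan is to treat the two items separately; in each the first move is to put the germ into a normal form via the contact classification of folds (as in \cite{V,T4}), and then either to build a topological equivalence by matching canonical regions or to obstruct it by exhibiting a modulus of stability. By the fold-fold genericity hypothesis the tangency curves $S_X$ and $S_Y$ are smooth and meet transversally at $p$, so they cut a neighborhood of $p$ in $\s$ into four sectors that are alternately of crossing and sliding type, and the sign conditions defining $\s^c$, $\s^{ss}$, $\s^{us}$ label each sector. Throughout I would use that an invisible fold of a field $W$ induces a germ of involution $\phi_W$ on $\s$, sending each point of a small arc transverse to $S_W$ to the second intersection of its $W$-orbit with $\s$, whereas a visible fold produces no such return.

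For item $(i)$ both folds are visible, so the orbits of $X$ and $Y$ leave $\s$ without returning to a neighborhood of $p$; the only recurrence near $p$ is then carried by the sliding field, and the local topological type of $Z$ is entirely encoded by the phase portrait of $F_Z^N$ on $\s^s$ together with the sector decomposition. To prove sufficiency I would show that, when $\s(H)$ holds, i.e.\ $F_Z^N$ has no center manifold in $\s^s$, the singularity is hyperbolic (or $F_Z^N$ is transient) on $\s^s$, hence its phase portrait is locally structurally stable as a planar field on the sliding sectors; an equivalence with any $\Cr$-nearby $\widetilde Z$ is then assembled by first conjugating the sliding portraits through a homeomorphism of $\s^s$ carrying $S_X,S_Y$ to $S_{\widetilde X},S_{\widetilde Y}$, and then extending it off $\s$ along the escaping arcs of $X$ and $Y$. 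For necessity, if $F_Z^N$ has a center manifold in $\s^s$ then the sliding singularity is non-hyperbolic and an arbitrarily small perturbation changes the number or the stability of the pseudo-equilibria, yielding a non-equivalent germ.

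For item $(ii)$ exactly one fold is invisible, say that of $X$, so $\phi_X$ is defined and the dynamics acquires a half-return mechanism: an orbit may slide, cross $\s$, follow an arc of $X$, and be reinjected into $\s$ via $\phi_X$. The three requirements of $\s(P)$ are precisely the conditions making this composite map structurally stable. Condition $(1)$, $\phi_X(S_Y)\pitchfork S_Y$ at $p$, guarantees that the reinjected fold line crosses $S_Y$ cleanly, so no degenerate tangency is created at $p$; condition $(3)$, $\phi_X(S_Y)\pitchfork F_Z^N$, guarantees that the sliding flow meets this reinjected line transversally, so its contact point is not a modulus; and condition $(2)$, transversality of $F_Z^N$ and $\phi_X^* F_Z^N$ on $\s^{ss}\cap\phi_X(\s^{us})$, rules out a tangential matching of the sliding flow with its $\phi_X$-pullback on the overlap where both descriptions are active. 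Together with the hyperbolicity or transience of $F_Z^N$ at $p$, these yield finitely many canonical regions with a stable boundary-incidence pattern, and sufficiency follows by the region-matching of item $(i)$, now enriched by the constraint $h\phi_X=\phi_{\widetilde X}h$ so that the reinjection is respected.

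The main obstacle, and the heart of the necessity direction in item $(ii)$, is to prove that violating any of $(1)$--$(3)$ genuinely destroys stability. The argument I would pursue is, for each failure, to produce a one-parameter deformation along which a geometric quantity invariant under the $\s$-equivalence of Definition \ref{equivalencedef} varies continuously: when $(3)$ fails, the position or number of tangencies between $\phi_X(S_Y)$ and a sliding orbit becomes such a modulus; when $(2)$ fails, a tangential contact between $F_Z^N$ and $\phi_X^*F_Z^N$ plays the same role; and when $(1)$ fails, the cusp-like configuration of $\phi_X(S_Y)$ and $S_Y$ unfolds into topologically distinct pictures. The delicate point is to establish rigorously that these quantities are honest invariants of the equivalence relation rather than coordinate artefacts; correspondingly, on the positive side, the construction of the sufficiency homeomorphism is the most technical part, since it must simultaneously respect the sliding foliation, the tangency curves $S_X,S_Y$, and the involution $\phi_X$.
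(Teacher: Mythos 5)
Your proposal is correct and follows essentially the same route as the paper: the paper likewise reduces to the normal form of Proposition \ref{FFNF_prop}, proves item $i)$ purely through the sliding dynamics (at a visible fold-fold no orbit of $X$ or $Y$ connects points of $\s$, so stability is equivalent to the sliding portrait being bifurcation-free, i.e.\ $(\ag,\bg,\cg)\in R_{H}^{1}\cup R_{H}^{2}$, which is condition $\s(H)$), and proves item $ii)$ via the involution $\phi_{X}$, translating your reading of conditions $(1)$--$(3)$ of $\s(P)$ into the parameter inequalities $\ag\neq 0$, $(\ag+\bg)(\ag\bg-\cg)\neq 0$ and $2\ag(\ag+\bg)-\cg\neq 0$, with necessity argued exactly as you indicate, by breaking the degenerate tangencies with small perturbations. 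The one step you flag as the delicate point of sufficiency --- constructing $h$ that simultaneously conjugates sliding orbits and satisfies $h\circ\phi_{X_{0}}=\phi_{X}\circ h$ --- is resolved in the paper precisely by your condition $(2)$: the transversality of $F_{Z}^{N}$ and $\phi_{X}^{*}F_{Z}^{N}$ turns the two foliations into a coordinate grid on the connection region, which determines $h$ there first, after which it is propagated along sliding orbits (using transversality to $\phi_{X}(S_{Y})$), extended over $\s^{c}$ by the Tietze theorem, and lifted to $M^{\pm}$ by the flows.
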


\begin{thmC}\label{thmC}
	$\s_{0}=\s(G)\cap \s(R)\cap \s(H)\cap \s(P)\cap\s(E)$. 
\end{thmC}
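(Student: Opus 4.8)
The plan is to reduce this global statement to a finite list of local normal forms and then to settle local structural stability in each one, matching every case with one of the five conditions. By Definition~\ref{sigmalss}, $Z\in\s_{0}$ if and only if $Z$ is locally structurally stable at every $p\in\s$; since each of $\s(G),\s(R),\s(H),\s(P),\s(E)$ is itself a condition imposed pointwise on a specific type of point, the claimed equality will follow once all admissible local types on $\s$ are classified and local structural stability is characterized in each. No global gluing is required, precisely because $\s$-local structural stability is defined pointwise. The first task is to classify the points $p\in\s$ for a generic $Z=(X,Y)$: a point is either regular-regular (both $Xf(p)\neq 0$ and $Yf(p)\neq 0$) or a tangential singularity (some Lie derivative vanishes). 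I would argue that any tangency which is not a fold --- a cusp, or a higher-order contact --- is locally structurally unstable, so stability forces every tangential point to be a generic fold; this is exactly condition $\s(G)$ and yields $\s_{0}\subseteq\s(G)$. Conversely, under $\s(G)$ the only admissible local types are crossing regular-regular points, sliding regular-regular points, regular-fold points, and the three kinds of fold-fold points (visible, parabolic, and elliptic/T-singularity).

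Next I would dispatch the unconditional and sliding cases. A crossing regular-regular point ($p\in\s^{c}$) is always locally structurally stable, since there $Z$ is topologically a transversal crossing flow. A sliding regular-regular point ($p\in\s^{s}$) is locally structurally stable if and only if $p$ is a regular point or a hyperbolic singularity of the sliding field $F_{Z}$: this is the classical planar structural-stability criterion applied to $F_{Z}$ on the $2$-manifold $\s^{s}$, which is precisely condition $\s(R)$. A regular-fold point admits a stable local normal form and is likewise always locally structurally stable, so it imposes no condition and contributes nothing to the intersection. Hence, after removing the unconditionally stable types and the sliding case, every remaining obstruction to local stability is concentrated at the fold-fold points.

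For the fold-fold points I would invoke the already-stated classification results. A visible fold-fold point is locally structurally stable if and only if $\s(H)$ holds at it, by Theorem~B$(i)$; a parabolic fold-fold point is locally structurally stable if and only if $\s(P)$ holds, by Theorem~B$(ii)$; and a T-singularity is locally structurally stable if and only if $\s(E)$ holds, by Theorem~A. Assembling the cases: $Z$ is locally structurally stable at every point of $\s$ if and only if every tangency is a fold [condition $\s(G)$], every sliding regular point is regular or hyperbolic for $F_{Z}$ [condition $\s(R)$], and each fold-fold point satisfies the appropriate one of $\s(H),\s(P),\s(E)$. This conjunction is exactly $\s(G)\cap\s(R)\cap\s(H)\cap\s(P)\cap\s(E)$, which proves the equality; note that when a given type of point is absent for a particular $Z$, the corresponding condition is vacuously satisfied, so the bookkeeping is valid in all cases.

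The step I expect to be the main obstacle is the completeness of the local classification carried out in the first paragraph: one must be certain that $\s(G)$--$\s(E)$ genuinely exhaust every source of local instability, i.e. that no admissible local type has been overlooked and that every non-fold tangency is indeed unstable, so that $\s(G)$ is both necessary and sufficient to exclude them. Verifying that regular-fold points are unconditionally stable, and that the planar criterion for $F_{Z}$ is the correct stability condition at sliding points, will also require care with the Filippov convention near the boundary $S_{Z}$ of the sliding region $\s^{s}$, where the fold curves meet.
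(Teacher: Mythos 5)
Your overall architecture (pointwise reduction, then a case analysis matched against the five conditions, quoting Theorems A and B at the fold-fold points) is the same as the paper's, but your first paragraph contains a genuine error that breaks the proof. You claim that any tangency which is not a fold --- in particular a cusp --- is locally structurally unstable, and you accordingly drop cusp-regular points from your list of admissible local types. This is false, and it contradicts the very statement you are proving: Theorem \ref{generic}(3) asserts that $Z$ is locally structurally stable at \emph{any} cusp-regular singularity (the stability mechanism being Lemma \ref{sliding}(3), which gives quadratic contact of $F_{Z}$ with $\partial\s^{s}$, a locally stable configuration). Moreover, a cusp-regular point \emph{is} a tangential singularity in the sense of the paper, so it is permitted by $\s(G)$: the right-hand side $\s(G)\cap\s(R)\cap\s(H)\cap\s(P)\cap\s(E)$ contains systems with cusp-regular points, and if your instability claim were correct the stated equality would be false. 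You have in effect replaced $\s(G)$ by the stronger (and wrong) condition ``every tangency is a fold,'' so even if every other step were sound you would be proving a different identity than Theorem C.

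There is a second, smaller gap in the necessity direction $\s_{0}\subseteq\s(G)$. You defer the instability of degenerate (non-elementary) tangencies to a ``main obstacle'' paragraph without resolving it, whereas the paper disposes of this via Proposition \ref{vishik}: $\Xi_{0}$ (regular-regular points and elementary tangential singularities only --- fold-regular, cusp-regular, and fold-fold with $S_{X}\pitchfork S_{Y}$) is open and dense in $\Or$, which is what justifies restricting to these local types and yields $\s_{0}\subset\s(G)$. Once that reduction is in place, the paper's proof is exactly the assembly you intended: Theorem \ref{generic} handles regular-regular (giving $\s(R)$), fold-regular, and cusp-regular points (the latter two unconditionally stable), and Theorems A and B characterize stability at elliptic, hyperbolic and parabolic fold-fold points via $\s(E)$, $\s(H)$, $\s(P)$. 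To repair your proposal: delete the claim that cusps are unstable, reinstate cusp-regular points as an unconditionally stable generic type, and replace your ad hoc classification with the appeal to Proposition \ref{vishik}.
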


\begin{thmD}\label{thmD}
	$\s_{0}$ is not residual in $\Or$.
\end{thmD}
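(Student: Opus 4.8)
The plan is to exploit the Baire property of $\Or$: in a Baire space every residual set is dense, so to prove that $\s_0$ is \emph{not} residual it suffices to produce a nonempty open set $\mathcal{U}\subset\Or$ with $\mathcal{U}\cap\s_0=\emptyset$. Since $\Or=\Xr\times\Xr$ carries the product $\Cr$ topology, it is a Baire space and this reduction is legitimate. By Theorem~\ref{thmC} we have $\s_0\subset\s(E)$, so it is enough to exhibit an open family of germs that possess a T-singularity but violate condition $\s(E)$.

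The source of such an open set is the non-genericity of stability for pairs of involutions (Theorem~\ref{ssinv}). Near a T-singularity $p$ each fold produces an involution $\phi_X,\phi_Y\in I^{r}$ with $\det\phi_X'(p)=\det\phi_Y'(p)=-1$, and the first return map factors as $\phi_Z=\phi_Y\circ\phi_X$. Hence $\det D\phi_Z(p)=1$, so the fixed point $p$ is either a saddle, with real eigenvalues $\lambda,\lambda^{-1}$ ($\lambda\neq\pm1$), or an elliptic point, with conjugate eigenvalues $e^{\pm i\theta}$ on the unit circle. Condition $\s(E)$ selects the saddle case, whereas Theorem~\ref{ssinv} guarantees that structurally stable pairs are not dense in $W^{r}$; that is, there is a nonempty open set of pairs whose composition has a non-hyperbolic (elliptic) fixed point, on which $(\phi_X,\phi_Y)$ — and hence $Z$ — fails to be structurally stable.

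I would then transfer this open set to $\Or$ and verify its openness directly. First, a T-singularity persists under $\Cr$-small perturbations: the tangency sets $S_X,S_Y$ are curves meeting transversally at $p$, so a perturbed system still has a transversal fold-fold point $p'$ near $p$, and the invisibility conditions (signs of the second Lie derivatives) are open, so $p'$ remains a T-singularity. Second, the first return map $\phi_{Z}$ and its linearization depend continuously on $Z$, so the eigenvalues of $D\phi_{Z}(p')$ vary continuously with $Z$, and lying strictly off the real axis is an open condition. Hence the set $\mathcal{U}$ of germs having an elliptic T-singularity is open and nonempty (non-emptiness follows, for instance, from the semi-linear model with a T-singularity at the origin considered in \cite{J2}). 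Every $Z\in\mathcal{U}$ violates $\s(E)$, so $\mathcal{U}\cap\s(E)=\emptyset$ and therefore $\mathcal{U}\cap\s_0=\emptyset$.

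Combining the two steps, $\s_0$ misses the nonempty open set $\mathcal{U}$, so $\s_0$ is not dense in the Baire space $\Or$, whence $\s_0$ is not residual. The main obstacle I expect is not the Baire argument but the two openness claims of the previous paragraph: one must know that the correspondence $Z\mapsto(\phi_X,\phi_Y)$ and the first return map are genuinely continuous in the germ topology, and that the elliptic region of $W^{r}$ is realized by an \emph{open} family of Filippov germs — facts that rest on the explicit construction of $\phi_Z$ developed earlier in the paper. Once continuous dependence and persistence of the T-singularity are in hand, the conclusion is immediate.
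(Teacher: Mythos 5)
Your proposal is correct and is essentially the paper's own argument: the paper likewise takes a system whose normal parameters satisfy $\ag\bg(\ag\bg-\cg)<0$ at the T-singularity (equivalently, by Lemma~\ref{posinv}, the first return map has complex eigenvalues), uses the Implicit Function Theorem and continuity to produce a whole neighborhood $\V$ of such systems, and concludes $\V\subset\Or\setminus\s_{0}$, so $\s_{0}$ fails to be dense and hence residual. The only cosmetic difference is bookkeeping: the paper cites Proposition~\ref{NH-unst} to get local structural instability of every $Z\in\V$ directly, while you route the conclusion through $\s_{0}\subset\s(E)$ from Theorem C; both are sound and rest on the same persistence and continuous-dependence facts.
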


As a corollary of the characterization Theorem \ref{thmC}, we obtain:

\begin{thmE}\label{thmE}
	\begin{enumerate}
		\item[$i)$] $\s_{0}$ is an open dense set in $\s(E)$. Moreover, $\s(E)$ is maximal with respect to this property.
		\item[$ii)$] If $Z\notin\s(E)$ then $Z$ has $\infty$-moduli of stability.
	\end{enumerate}	
\end{thmE}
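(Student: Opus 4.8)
The plan is to read both statements off the identity $\s_{0}=\s(G)\cap\s(R)\cap\s(H)\cap\s(P)\cap\s(E)$ of Theorem C, using only the openness of the factors together with the robustness of the modulus produced by a T-singularity that violates condition $\s(E)$.

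First I would record two facts that fall out of the proofs of Theorems B and C. On the one hand, the set $\mathcal{A}:=\s(G)\cap\s(R)\cap\s(H)\cap\s(P)$ is open and dense in $\Or$: each factor is the generic local stability condition at regular, sliding, and visible/parabolic fold--fold points, and each such condition is open and dense. On the other hand, $\s(E)$ is open in $\Or$, since a hyperbolic saddle of $\phi_{Z}$ with both separatrices in the open crossing set $\s^{c}$ is a persistent configuration. Granting this, Theorem C reads $\s_{0}=\s(E)\cap\mathcal{A}$, so $\s_{0}\subseteq\s(E)$. As $\mathcal{A}$ is open, $\s_{0}$ is relatively open in $\s(E)$; and as $\mathcal{A}$ is dense while $\s(E)$ is open, every nonempty relatively open $W\subseteq\s(E)$ is open in $\Or$, hence meets $\mathcal{A}$, hence meets $\s_{0}=\s(E)\cap\mathcal{A}$. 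Thus $\s_{0}$ is open and dense in $\s(E)$, which is the first assertion of $i)$.

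Next I would establish $ii)$. If $Z\notin\s(E)$, then $Z$ carries a T-singularity $p$ at which the first return map $\phi_{Z}$ is not a hyperbolic saddle with both separatrices in $\s^{c}$. By the explicit study of $\phi_{Z}$ behind Theorem A (and the foliation phenomenon of the Introduction, which answers \cite{J2} in the negative), any such defect is detected by a continuous topological invariant which is constant along $\s$-equivalence classes but not locally constant in $Z$: typically the argument $\T$ of a conjugate pair of non-real eigenvalues of $\phi_{Z}$, and, in the remaining real case, the position of a separatrix relative to $\s^{c}$. Since this invariant sweeps a continuum of values on every neighborhood of $Z$, infinitely many pairwise inequivalent germs accumulate at $Z$; in the language of moduli of stability this is exactly the statement that $Z$ has $\infty$-moduli.

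Finally I would derive the maximality in $i)$, which I read as $\s(E)=\operatorname{int}\!\big(\overline{\s_{0}}\big)$, the largest open set in which $\s_{0}$ is dense. The inclusion $\s(E)\subseteq\operatorname{int}(\overline{\s_{0}})$ is immediate from the first part, since $\s_{0}$ is open and dense in the open set $\s(E)$. For the reverse inclusion I would use $ii)$: the invariant above is defined and nonconstant on an entire open region $\mathcal{U}\subseteq\Or$, and because distinct values give inequivalent germs, no member of $\mathcal{U}$ is locally structurally stable, so $\mathcal{U}\cap\s_{0}=\emptyset$. Every $Z\notin\s(E)$ lies in the closure of such a region, so each neighborhood of $Z$ meets the open set $\mathcal{U}\subseteq\Or\setminus\overline{\s_{0}}$; since an open set contained in $\overline{\s_{0}}$ cannot meet $\mathcal{U}$, we get $Z\notin\operatorname{int}(\overline{\s_{0}})$, hence $\operatorname{int}(\overline{\s_{0}})\subseteq\s(E)$. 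I expect the real obstacle to be precisely the robustness feeding both $ii)$ and the maximality, namely that this invariant survives as a genuine topological modulus on an \emph{open} region around every violator of $\s(E)$ --- so that structural stability fails there on an open set rather than at a single germ. That is the delicate first return map computation at a T-singularity carried out for Theorem A, and is where essentially all the work is concentrated.
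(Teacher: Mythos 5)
Your part i) follows essentially the paper's own route: Theorem C gives $\s_{0}=\s(E)\cap\mathcal{A}$ with $\mathcal{A}=\s(G)\cap\s(R)\cap\s(H)\cap\s(P)$ open and dense, whence relative openness and density of $\s_{0}$ in $\s(E)$. (One small caveat you share with the paper: openness of $\s(E)$ is not entirely free, since a field satisfying $\s(E)$ \emph{vacuously} --- say with a non-transverse intersection of $S_{X}$ and $S_{Y}$, hence no T-singularity in the sense of the definition --- can be perturbed so as to create a T-singularity violating the saddle condition; this does not destroy density of $\s_{0}$ in $\s(E)$, but it does require a word.)

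The genuine gap is in ii) and in the reverse inclusion of your maximality argument, precisely for violators of $\s(E)$ whose first return map \emph{is} hyperbolic. By Lemma \ref{posinv}, $Z\notin\s(E)$ splits into two cases: $\ag\bg(\ag\bg-\cg)\leq 0$ (nonhyperbolic, generically complex eigenvalues), and $\ag\bg(\ag\bg-\cg)>0$ with not both $\ag,\bg<0$, where $\phi_{Z}$ is a genuine saddle but at least one local invariant manifold lies in $\s^{s}$. Your treatment of the first case via the rotation number $\tau$ matches the paper (Proposition \ref{NH-unst}, its remark, and the proof of Theorem D), and the nonhyperbolic boundary is indeed in the closure of the complex region, as you use. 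But in the second case the eigenvalues are real, hyperbolicity is robust, and such $Z$ is \emph{not} in the closure of the complex-eigenvalue region, so the argument-of-eigenvalues modulus is unavailable there; your fallback invariant, ``the position of a separatrix relative to $\s^{c}$,'' is a discrete, locally constant datum (constant on each of the open regions of Lemma \ref{posinv}(2)), so it neither sweeps a continuum of values near $Z$ --- which is what ii) needs --- nor is it ``nonconstant on an open region $\mathcal{U}$,'' which is what your derivation of $\mathcal{U}\cap\s_{0}=\emptyset$ rests on. The paper's mechanism in this case is Proposition \ref{webproof}: transporting the sliding vector field by iterates of $\phi_{Z}^{2}$ produces three pairwise transversal foliations $(\mathcal{F}_{0},\mathcal{F}_{1},\mathcal{F}_{2})$ --- a $3$-web --- near the separatrix contained in $\overline{\s^{s}}$, and planar $3$-webs carry a continuous functional invariant preserved by topological equivalence. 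That web invariant is what simultaneously gives instability throughout the open regions of the second case (hence a whole neighborhood of $Z$ disjoint from $\s_{0}$, which your maximality step requires) and the $\infty$-moduli asserted in ii). Without it, the corollary remains unproved exactly on the parameter regions where a saddle separatrix enters the sliding region.
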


In addition, if $Z$ has a T-singularity at $p$ and $\phi_{Z}$ has complex eigenvalues, then a neighborhood $\V$ of $Z$ in $\Or$ is foliated by codimension one submanifolds of $\Or$ corresponding to the value of the argument of the eigenvalues of the first return map. Moreover, the topological type along the corresponding leaf is locally constant.  

We conclude that the local behavior around a T-singularity implies in the non-genericity of $\s_{0}$ in $\Or$.

\section{Generic Singularities}
In this section, we provide a classification of the generic points of $\s$.

\begin{definition}
	Let $Z=(X,Y)\in\Or$, a point $p\in \s$ is said to be a \textbf{tangential singularity} of $Z$ if $Xf(p)Yf(p)=0$ and $X(p), Y(p)\neq 0$.
\end{definition}

\begin{definition}
	Let $Z=(X,Y)\in\Or$, a point $p\in \s$ is said to be a \textbf{$\s$-singularity} of $Z$ if $p$ is either a tangential singularity or a pseudo-equilibrium of $F_{Z}$. Otherwise, it is said to be a \textbf{regular-regular} point of $Z$
\end{definition}

\begin{definition}
	Let $Z=(X,Y)\in\Or$. A tangential singularity $p\in \s$ is said to be \textbf{elementary} if it satisfies one of the following conditions:
	\begin{enumerate}
		\item[(FR) -] $Xf(p)= 0$, $X^{2}f(p)\neq 0$ and $Yf(p)\neq 0$ (resp. $Xf(p)\neq 0$, $Yf(p)=0$ and $Y^{2}f(p)\neq 0$). In this case, $p$ is said to be a \textbf{fold-regular} (resp. regular-fold) point of $\s$.	
		\item[(CR) -] $Xf(p)= 0$, $X^{2}f(p)=0$, $X^{3}f(p)\neq 0$ and $Yf(p)\neq 0$ (resp. $Xf(p)\neq 0$, $Yf(p)=0$, $Y^{2}f(p)= 0$ and $Y^{3}f(p)\neq 0$), and $\{df(p),dXf(p),dX^{2}f(p)\}$ (resp. $\{df(p),$ $dYf(p),dY^{2}f(p)\}$) is a linearly independent set. In this case, $p$ is said to be a \textbf{cusp-regular} (resp. regular-cusp) point of $\s$.	
		\item[(FF) -] If $Xf(p)= 0$, $X^{2}f(p)\neq0, Yf(p)= 0$, $Y^{2}f(p)\neq 0$ and $S_{X}\pitchfork S_{Y}$ at $p$. In this case, $p$ is said to be a \textbf{fold-fold} point of $\s$. 			
	\end{enumerate}
\end{definition}

\begin{definition}
	Define $\Xi_{0}$ as the set of all germs of nonsmooth vector fields $Z\in\Or$ such that, for each $p\in\s$, either $p$ is a regular-regular point of $Z$ or $p$ is an elementary tangential singularity.
\end{definition}

From \cite{V}, we derive the following result:
\begin{proposition}\label{vishik}
	$\Xi_{0}$ is an open dense set of $\Or$.
\end{proposition}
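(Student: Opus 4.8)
The plan is to reduce the genericity of $\Xi_0$ to Vishik's classification of generic constraints, applied separately to each vector field $X$ and $Y$. The key observation is that the defining conditions for $\Xi_0$ — being a regular-regular point or an elementary tangential singularity of types (FR), (CR), or (FF) — are exactly the conditions Vishik established as generic for a single vector field tangent to a codimension-one submanifold, together with a transversality condition ($S_X \pitchfork S_Y$) governing the interaction of the two tangency sets. First I would recall Vishik's result: for a single germ $X \in \Xr$, the set of vector fields for which every tangency point with $\s$ is either a fold ($Xf(p)=0$, $X^2f(p)\neq 0$) or a cusp ($Xf(p)=X^2f(p)=0$, $X^3f(p)\neq 0$ with the appropriate independence of differentials) — and for which $X(p)\neq 0$ at tangencies — is open and dense in $\Xr$. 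This gives open dense subsets $\mathcal{A}_X, \mathcal{A}_Y \subset \Xr$.

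Next I would assemble these into a condition on $Z=(X,Y)\in\Or$. Openness is the easier half: each defining condition is expressed through non-vanishing of finitely many Lie derivatives (which are continuous in the $\Cr$ topology) and through transversality conditions, both of which are open; since $\s$ is compact, a finite cover argument upgrades pointwise openness to openness of $\Xi_0$ in $\Or$. For density, I would start from a given $Z=(X,Y)$ and first perturb $X$ into $\mathcal{A}_X$ and $Y$ into $\mathcal{A}_Y$, using Vishik's density applied to each factor; after this perturbation every tangency of $X$ alone and of $Y$ alone is a fold or a cusp. The remaining task is to handle points where both $Xf$ and $Yf$ vanish simultaneously, forcing the fold-fold transversality $S_X\pitchfork S_Y$. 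Here I would invoke a parametric transversality / Thom-type argument: $S_X$ and $S_Y$ are curves in the surface $\s$, and by an arbitrarily small further perturbation of, say, $Y$, the curve $S_Y$ can be made transverse to the fixed curve $S_X$ at their common points, while staying inside $\mathcal{A}_Y$ (which is open). One must also ensure no new degeneracies (higher-order tangencies, or coincidences of fold and cusp points between the two fields) survive; these are codimension-$\geq 1$ phenomena that are removed by generic perturbation.

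The main obstacle I anticipate is precisely the coordination of the two perturbations at the overlap set $S_X \cap S_Y$: one must verify that achieving transversality of the tangency curves does not destroy the fold/cusp character already secured for each field, and that the finitely many conditions can be satisfied simultaneously. The cleanest route is to phrase everything via jet transversality on an appropriate jet bundle over $\s$, so that the conditions defining $\Xi_0$ become avoidance of a stratified subset of codimension $\geq 1$ (for density) whose complement is open (for openness); compactness of $\s$ then yields the result uniformly. Thus the proof reads as: apply Vishik factorwise to obtain folds and cusps, then apply a transversality theorem to the pair of tangency curves to obtain $S_X\pitchfork S_Y$, and finally combine openness of each individual condition with the compactness of $\s$ to conclude that $\Xi_0$ is open and dense in $\Or$.
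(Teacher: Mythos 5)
Your proposal is correct and follows essentially the same route as the paper, which gives no independent argument but simply derives the proposition from Vishik's classification \cite{V}: factorwise genericity of folds and cusps for $X$ and $Y$, an arbitrarily small further perturbation making $S_X\pitchfork S_Y$ and avoiding cusp coincidences (codimension~$\geq 1$ phenomena in the two curves on the surface $\s$), and openness from the compactness of $\s$ together with the openness of the finitely many Lie-derivative and transversality conditions. Your jet-transversality formulation is exactly the standard way to make the paper's one-line citation rigorous.
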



In order to classify $\s_{0}$, we assume, without loss of generality, that $p$ is either a regular-regular point or an elementary tangential singularity.

The next step is devoted to characterize the locally structurally stable systems at generic singularities.

\begin{lemma} \label{sliding}
	Let $Z=(X,Y) \in \Omega^{r}$ and assume that $R$ is a connected component of $\Sigma^{s}$. Then:
	\begin{enumerate}
		\item The sliding vector field $F_{Z}$ is of class $\mathcal{C}^{r}$ and it can be smoothly extended beyond the boundary of $R$.
		\item If $p\in \partial R$ is a fold-regular point of $Z$, then $F_{Z}$ is transverse to $\partial R$ at $p$.
		\item If $p\in \partial R$ is a cusp-regular point of $Y$, then $F_{Z}$ has a quadratic contact with $\partial R$ at $p$.	
	\end{enumerate} 
\end{lemma}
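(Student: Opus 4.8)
The plan is to handle all three items through the single algebraic identity that the sliding field is the quotient
\[
F_Z=\frac{F_Z^N}{D},\qquad F_Z^N=Yf\,X-Xf\,Y,\quad D=Yf-Xf,
\]
and to exploit the fact that at a fold or cusp boundary point exactly one of the Lie derivatives $Xf$, $Yf$ vanishes while the other does not, so that $F_Z$ collapses onto the corresponding smooth field.

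For item (1), first I note that since $f$ is smooth and $X,Y\in\chi^r$, both the numerator $F_Z^N$ and the denominator $D$ are of class $\mathcal{C}^r$. On $\Sigma^s$ one has $Xf\,Yf<0$, hence $D=Yf-Xf$ never vanishes and $F_Z=F_Z^N/D$ is $\mathcal{C}^r$ on $R$. To extend it beyond $\partial R$, I would observe that at a fold-regular or cusp-regular point $p\in\partial R$ the tangency-carrying field (say $X$) satisfies $Xf(p)=0$ while the other satisfies $Yf(p)\neq0$; therefore $D(p)=Yf(p)\neq0$, and by continuity $D\neq0$ on a full neighborhood $V$ of $p$ in $\Sigma$. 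Consequently $F_Z^N/D$ is a well-defined $\mathcal{C}^r$ field on $V$, which is the desired smooth extension across the fold/cusp curve. (At a fold-fold point both Lie derivatives vanish and $D(p)=0$, which is exactly why such points cannot be crossed.)

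For items (2) and (3) the crucial preliminary remark is that whenever $Xf(p)=0$ and $Yf(p)\neq0$ the quotient collapses, $F_Z(p)=\bigl(Yf(p)X(p)\bigr)/Yf(p)=X(p)$, whence the Lie derivative of $Xf$ along $F_Z$ at $p$ is
\[
F_Z(Xf)(p)=\nabla(Xf)(p)\cdot F_Z(p)=\nabla(Xf)(p)\cdot X(p)=X^2f(p).
\]
Since $dXf(p)\neq0$ (from $X^2f(p)\neq0$ in the fold case, and from the independence of $\{df,dXf,dX^2f\}$ in the cusp case), $\partial R$ coincides near $p$ with the smooth tangency curve $S_X=\{Xf=0\}$, and because $F_Z$ is tangent to $\Sigma$, transversality of $F_Z$ to $\partial R$ at $p$ is equivalent to $F_Z(Xf)(p)\neq0$. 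In the fold-regular case this is precisely $X^2f(p)\neq0$, settling item (2) at once.

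For item (3) I treat the cusp-regular case with the cusp in $X$ (the regular–cusp case is symmetric), so $Xf(p)=X^2f(p)=0$ and $X^3f(p)\neq0$. The identity above now gives $F_Z(Xf)(p)=X^2f(p)=0$, so the orbit of $F_Z$ is tangent to $\partial R=S_X$ at $p$; it remains to show the contact is exactly quadratic, i.e. $F_Z^2(Xf)(p)\neq0$. Writing $g:=F_Z(Xf)=\bigl(Yf\,X^2f-Xf\,Y(Xf)\bigr)/D$ and using $F_Z(p)=X(p)$ again, I get $F_Z^2(Xf)(p)=X(g)(p)$; differentiating the quotient and inserting $Xf(p)=X^2f(p)=0$ collapses every term except one, leaving
\[
F_Z^2(Xf)(p)=\frac{Yf(p)\,X^3f(p)}{Yf(p)}=X^3f(p)\neq0,
\]
which is the asserted quadratic contact. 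I expect this last step to be the main obstacle: away from $p$ one has $F_Z\neq X$, so the clean reductions $F_Z(Xf)=X^2f$ and $F_Z^2(Xf)=X^3f$ are valid only at $p$, and the real content is that the cusp conditions $Xf(p)=X^2f(p)=0$ are precisely what annihilate the extra terms created by differentiating $F_Z^N/D$. Verifying that cancellation carefully is where the genuine work lies.
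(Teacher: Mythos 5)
Your argument is correct, but note that there is nothing in the paper to compare it against line by line: the paper states Lemma~\ref{sliding} and immediately delegates the proof to Teixeira's stability paper \cite{T2}, so your computation supplies exactly the self-contained verification that the citation hides. The mechanics all check out. On $\Sigma^{s}$ one has $Xf\,Yf<0$, so $D=Yf-Xf$ is nonvanishing and $F_{Z}=F_{Z}^{N}/D$ is $\mathcal{C}^{r}$; at a fold-regular or cusp-regular boundary point exactly one Lie derivative vanishes, $D(p)\neq 0$ persists on a full neighborhood, and the same quotient gives the $\mathcal{C}^{r}$ extension across $\partial R$ --- your parenthetical that this is precisely what fails at a fold-fold point (where $D(p)=0$ and also $F_{Z}^{N}(p)=0$) is the right caveat. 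The collapse $F_{Z}(p)=X(p)$ when $Xf(p)=0\neq Yf(p)$ yields $F_{Z}(Xf)(p)=X^{2}f(p)$, settling item (2), and your quotient-rule computation for item (3) is correct: writing $N=Yf\,X^{2}f-Xf\,Y(Xf)$, the cusp conditions give $N(p)=0$ and $X(N)(p)=Yf(p)\,X^{3}f(p)$, so $F_{Z}^{2}(Xf)(p)=X^{3}f(p)\neq 0$, which is exactly quadratic contact. Two small points deserve to be made explicit, though neither is a gap. First, to read $F_{Z}(Xf)(p)\neq 0$ as transversality you need $\partial R$ to be a regular curve \emph{inside} $\Sigma$, i.e. $d(Xf|_{\Sigma})(p)\neq 0$; asserting $dXf(p)\neq 0$ alone is not quite enough, since $dXf(p)$ could a priori be proportional to $df(p)$. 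In the fold case regularity follows because $dXf(p)(X(p))=X^{2}f(p)\neq 0$ with $X(p)\in T_{p}\Sigma$, and in the cusp case because $\{df(p),dXf(p)\}$ is independent by hypothesis. Second, the step $F_{Z}^{2}(Xf)(p)=X(g)(p)$ is legitimate only because a derivation depends pointwise on the vector, so the value $F_{Z}(p)=X(p)$ suffices even though $F_{Z}\neq X$ nearby --- you flag this correctly, and it is indeed where the genuine content of the cusp case sits.
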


This result is proved in \cite{T2}. It is a very useful tool to construct topological equivalences.

\begin{theorem}\label{generic}
	Let $Z=(X,Y)\in\Or$, then:\begin{enumerate}
		\item $Z$ is locally structurally stable at a regular-regular point $p\in\s$ if and only if $Z$ satisfies $\s(R)$ at $p$.
		\item $Z$ is locally structurally stable at any fold-regular singularity $p\in\s$.
		\item $Z$ is locally structurally stable at any cusp-regular singularity $p\in\s$.		
	\end{enumerate}
\end{theorem}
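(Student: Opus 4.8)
\medskip

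The plan is to treat each of the three cases separately, building explicit topological equivalences via local normal forms, and to exploit Lemma~\ref{sliding} to control the behavior of the sliding vector field near the boundary of the sliding region.

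\medskip

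\emph{Item (1): regular-regular points.} First I would observe that at a regular-regular point $p$, by definition, both $Xf(p)\neq 0$ and $Yf(p)\neq 0$, so $p$ lies in $\s^c\cup\s^s$. If $p\in\s^c$, then $X$ and $Y$ are both transverse to $\s$ with the same sign of the normal component, and the flow is a genuine crossing; using the flow-box theorem for $X$ on $M^+$ and for $Y$ on $M^-$, one glues the two flow-boxes along $\s$ to produce a local model that is clearly structurally stable, and condition $\s(R)$ imposes no constraint here. If $p\in\s^s$, then $p$ is a point of the sliding region and by Lemma~\ref{sliding}(1) the sliding field $F_Z$ is $\Cr$; the point $p$ is then either a regular point of $F_Z$ or a singularity. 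Structural stability is then equivalent to structural stability of $F_Z$ as a smooth planar vector field at $p$ together with the transverse foliation by arcs of $X$ and $Y$ hitting $\s$; the classical fact that a planar singularity is $\Cr$-structurally stable if and only if it is hyperbolic yields precisely condition $\s(R)$. I would argue both implications: if $\s(R)$ holds, the hyperbolic singularity (or regular point) of $F_Z$ gives a structurally stable picture by Grobman--Hartman together with the flow-box gluing; if $\s(R)$ fails, a non-hyperbolic singularity of $F_Z$ can be perturbed within $\Or$ to change its topological type, so $Z$ is not stable.

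\medskip

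\emph{Items (2) and (3): fold-regular and cusp-regular points.} Here the strategy is to produce a normal form. At a fold-regular point, after a $\Cr$ change of coordinates I would bring $f$ to $f(x,y,z)=z$ and straighten $X$ so that the fold tangency $S_X$ becomes a smooth curve through $p$ with $X$ having the standard quadratic fold behavior ($Xf=0$, $X^2f\neq 0$), while $Y$ is transverse to $\s$ at $p$ ($Yf(p)\neq 0$). The key structural ingredient is Lemma~\ref{sliding}(2): $F_Z$ is transverse to $\partial R$ at the fold-regular point, so the sliding trajectories cross the fold line transversally and the local phase portrait has no degeneracy to break. One then constructs the equivalence $h$ region by region (on $M^+$, on $M^-$, and on $\s^s$) and checks compatibility along the tangency set, concluding stability for \emph{every} such $Z$, with no extra generic condition needed. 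For the cusp-regular case the same scheme applies, now using Lemma~\ref{sliding}(3): $F_Z$ has a quadratic contact with $\partial R$ at the cusp, which is itself a structurally stable contact, and the cusp normal form (guaranteed by the linear independence of $\{df, dYf, dY^2f\}$ in condition (CR)) provides a rigid local model that persists under perturbation.

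\medskip

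I expect the main obstacle to be the careful construction and gluing of the homeomorphism across the tangency set in items (2) and (3), rather than the dynamical classification itself. The difficulty is that the equivalence must simultaneously conjugate the $X$-flow on $M^+$, the $Y$-flow on $M^-$, and the sliding flow $F_Z$ on $\s^s$, while respecting the tangency curve $S_X$ where these three pieces meet; ensuring the pieces match continuously (and that orientation along orbits is preserved) requires a delicate choice of transversal sections and a verification that the contact type of $F_Z$ with $\partial R$ from Lemma~\ref{sliding} is exactly what makes the glued map a homeomorphism. The genericity-free nature of (2) and (3) is the payoff: because Lemma~\ref{sliding} already pins down the contact geometry for \emph{any} $Z$, no further nondegeneracy hypothesis is needed, and the persistence of these normal forms under $\Cr$-small perturbations follows from the openness of the transversality/quadratic-contact conditions.
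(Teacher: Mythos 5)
Your proposal is correct and follows essentially the same route as the paper, which in fact gives no proof of Theorem~\ref{generic} at all but delegates it to \cite{F,GTS}, where the argument is exactly your scheme: flow-box gluing at crossing points, reduction of stability at sliding points to hyperbolicity of the planar field $F_{Z}$ (yielding condition $\s(R)$), and region-by-region construction of the equivalence at fold- and cusp-regular points using the transversality and quadratic-contact properties of Lemma~\ref{sliding}. Nothing in your sketch conflicts with the paper's treatment.
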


The proof of this result can be found in \cite{F, GTS}.

\section{Fold-Fold Singularity}
\subsection{A Normal Form}\label{NF_sec}

In this section we derive a normal form to study the fold-fold singularity and we present some consequences. This section is mainly motivated by the normal form of a fold point obtained by S. M. Vishik in \cite{V} and some variants such as \cite{J1,F,J2}.

\begin{proposition}\label{FFNF_prop}
	
	If $Z=(X,Y)\in\Or$ is a nonsmooth vector field having a fold-fold point at $p$ such that $S_{X}\pitchfork S_{Y}$ at $p$, then there exists coordinates $(x,y,z)$ around $p$ such that $f(x,y,z)=z$ and $Z$ is given by:
	
	\begin{equation}\label{FFNF}
	X(x,y,z)=\left(\begin{array}{c}
	\alpha\\
	1\\
	\dg y
	\end{array}\right)\textrm{ and }Y(x,y,z)=\left(\begin{array}{c}
	\gamma +\er(|(x,y,z)|)\\
	\beta +\er(|(x,y,z)|)\\
	x +\er(|(x,y,z)|^{2})
	\end{array}\right),
	\end{equation}
	where $\dg=\sgn(X^{2}f(p))$, $\sgn(\gamma)=\sgn(Y^{2}f(p))$, $\ag,\bg,\cg\in\rn{}$. 	
\end{proposition}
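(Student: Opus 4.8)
The plan is to normalize the data in the order $f$, then $X$, then the leading jet of $Y$, using only coordinate changes that fix the switching manifold (supplemented, where convenient, by a positive rescaling of the field, which is harmless for the orbit structure). First I would use that $0$ is a regular value of $f$: the submersion theorem yields coordinates centered at $p$ with $f(x,y,z)=z$, so $\s=\{z=0\}$. Writing $X=(X_1,X_2,X_3)$ and $Y=(Y_1,Y_2,Y_3)$, the Lie derivatives become $Xf=X_3$ and $Yf=Y_3$, and the fold hypotheses translate into $X_3(0)=Y_3(0)=0$ (so $X(0),Y(0)$ are nonzero horizontal vectors), $X^2f(0)=X\cdot\nabla X_3|_0\neq0$ and $Y^2f(0)\neq0$. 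Every subsequent change of coordinates will fix the last coordinate, preserving the identity $f=z$.

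Next I would put $X$ into fold normal form. Since $X(0)$ is horizontal and nonzero and the contact is quadratic ($X^2f(0)\neq0$), the classical fold lemma (Vishik \cite{V}, see also \cite{J1,F,J2}), combined if necessary with a positive rescaling, provides a $\s$-preserving change of coordinates after which $X=(0,1,\dg y)$ with tangency curve $S_X=\{y=z=0\}$ and $\dg=\sgn(X^2f(p))$; one checks directly that $X^2f=\dg$ on this form. This exhausts the flow-box freedom of $X$ but leaves available the shears and the symmetries commuting with $X$, which I will use in the last step.

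Then I would exploit the transversality hypothesis. Since $S_X\pitchfork S_Y$ at $p$, the line $T_0S_Y\subset\s$ is transverse to $S_X=\{y=0\}$, so a suitable shear $x\mapsto x-\ag y$ (which preserves $\s$, the set $S_X=\{y=0\}$, and the component $X_3=\dg y$) carries $T_0S_Y$ onto $\{x=0\}$ and, in doing so, turns the first component of $X$ into a constant, which I call $\ag$, giving $X=(\ag,1,\dg y)$. After this, the linear part of $Yf=Y_3$ has no $y$-term and a nonzero $x$-coefficient; its residual $z$-term can be eliminated by the $X$-symmetry $\Phi(x,y,z)=(x+\lambda(z-\tfrac{\dg}{2}y^2),y,z)$, which commutes with the flow of $X$ because $z-\tfrac{\dg}{2}y^2$ is a first integral, and fixes $\s$, after which a rescaling of $x$ normalizes the $x$-coefficient to $1$. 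This yields $Yf=x+\er(|(x,y,z)|^2)$, while $Y(0)=(\cg,\bg,0)$ with $\cg=Y_1(0)$ and $\bg=Y_2(0)$; evaluating $Y^2f(0)=Y_1(0)\,\partial_xY_3(0)=\cg$ gives $\sgn(\cg)=\sgn(Y^2f(p))$, and the Taylor remainders account for the $\er(\cdot)$ terms in $Y$.

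The main obstacle will be concentrated in the last two steps: straightening $X$ to the \emph{exact} form $(0,1,\dg y)$ rather than merely to leading order (the content of the fold lemma, relying essentially on the quadratic contact), and then verifying that enough coordinate freedom survives the normalization of $X$ to pin the linear part of $Yf$ to exactly $x$ without disturbing $X$ or $\s$. The hypothesis $S_X\pitchfork S_Y$ is precisely what guarantees both that the aligning shear exists and that the $x$-coefficient of $Yf$ is nonzero, so that the two normalizations are compatible and can be carried out simultaneously.
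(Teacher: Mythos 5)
Your proposal is correct and follows essentially the same route as the paper's (very terse) outline: Vishik's fold normal form \cite{V} to put $f=z$ and $X=(0,1,\dg y)$, followed by Taylor expansion of $Y$ and further $\s$-preserving changes to arrange $Yf=x+\er(|(x,y,z)|^{2})$. In fact you supply details the outline omits, in particular that the shear aligning $T_{0}S_{Y}$ with the $y$-axis is what reintroduces the constant $\ag$ in the first component of $X$, and that the residual $z$-term in $Yf$ is killed by a symmetry built from the first integral $z-\tfrac{\dg}{2}y^{2}$ of $X$.
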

\begin{proof}[Outline]
	Use the coordinates $(x,y,z)$ of Theorem $2$ from \cite{V} to put $X$ in the form $X(x,y,z)=(0,1,\dg y)$ and $f(x,y,z)=z$. Now, consider the Taylor expansion of $Y$ in this coordinate system and perform changes to put $Yf(x,y,z)=x +\er(|(x,y,z)|^{2})$.
\end{proof}

\begin{definition}
	If $Z\in\Or$ has a fold-fold singularity at $p$, then the coordinate system of Proposition \ref{FFNF_prop} will be called \textbf{normal coordinates} of $Z$ at $p$ and the parameters of $Z$ in the normal coordinates will be referred as \textbf{normal parameters} of $Z$ at $p$. Denote $Z=Z(\ag,\bg,\cg)$.
\end{definition}

\begin{remark}
	If $\gamma=\pm1$, $\alpha=V^{+}$ and $\beta=V^{-}$, then this normal form and the model used in \cite{J1,J3,J2}, have the same semi-linear part. Geometrically, $V^{+}$ ($V^{-}$) measures the cotangent of the angle $\theta^{+}$ ($\theta^{-}$) between $X(0)$ ($Y(0)$) and the fold line $S_{X}$ ($S_{Y}$). See \cite{J3} for more details.
\end{remark}

\begin{corollary}\label{formaS}
	If $Z=(X,Y)\in\Or$ is a nonsmooth vector field having a fold-fold point at $p$ such that $S_{X}\pitchfork S_{Y}$ at $p$, then there exist coordinates $(x,y,z)$ around $p$ defined in a neighborhood $U$ of $p$ in $M$, such that:
	\begin{enumerate}
		\item $f(x,y,z)=z$;
		\item $S_{X}\cap U=\{(x,0,0);\ x\in(-\e,\e)\}$, for $\e>0$ sufficiently small;
		\item $S_{Y}\cap U=\{(g(y),y,0);\ y\in(-\e,\e)\}$, for $\e>0$ sufficiently small, where $g$ is a $\Cr$ function such that $g(y)=\er(y^{2})$, i.e., $S_{Y_{0}}$ is locally a smooth curve tangent to the $y$-axis.
	\end{enumerate} 
\end{corollary}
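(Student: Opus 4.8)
The plan is to start from the normal form provided by Proposition \ref{FFNF_prop}, which already gives coordinates $(x,y,z)$ around $p$ with $f(x,y,z)=z$, establishing item (1) immediately. With $X$ and $Y$ in the form \eqref{FFNF}, the task reduces to computing the two tangency sets $S_X$ and $S_Y$ explicitly in these coordinates and reading off their local descriptions, invoking the transversality hypothesis $S_X\pitchfork S_Y$ where needed.

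First I would compute $S_X$. Since $f(x,y,z)=z$, we have $Xf(x,y,z)=X(x,y,z)\cdot\nabla f=X_3(x,y,z)=\dg y$, where $\dg=\pm1$. Hence $Xf=0$ is equivalent to $y=0$, and intersecting with $\Sigma=\{z=0\}$ gives $S_X\cap U=\{(x,0,0)\}$ for $x$ in a small interval $(-\e,\e)$, which is precisely item (2). Next I would compute $S_Y$. Here $Yf(x,y,z)=Y_3(x,y,z)=x+\er(|(x,y,z)|^2)$. On $\Sigma=\{z=0\}$ this becomes a smooth function $h(x,y)=x+\er(|(x,y)|^2)$ with $h(0,0)=0$ and $\partial_x h(0,0)=1\neq 0$. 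By the Implicit Function Theorem, the equation $h(x,y)=0$ can be solved locally for $x$ as a $\Cr$ function $x=g(y)$ with $g(0)=0$; differentiating the relation $g(y)+\er(|(g(y),y)|^2)=0$ at $y=0$ yields $g'(0)=0$, so $g(y)=\er(y^2)$. This gives $S_Y\cap U=\{(g(y),y,0)\}$ with $g(y)=\er(y^2)$, establishing item (3) and showing $S_Y$ is tangent to the $y$-axis at the origin.

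The transversality condition $S_X\pitchfork S_Y$ at $p$ is automatically consistent with this picture: $S_X$ is the $x$-axis (tangent direction $(1,0,0)$) while $S_Y$ has tangent direction $(g'(0),1,0)=(0,1,0)$ at the origin, and these two directions are linearly independent in $\Sigma$, so the two fold lines cross transversally as required. I would remark that the fold-fold hypothesis $X^2f(p)\neq 0$ and $Y^2f(p)\neq 0$ is what guarantees that $S_X$ and $S_Y$ are genuine regular curves (rather than higher-order tangency sets), justifying the one-dimensional parametrizations above.

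I do not anticipate a serious obstacle here, since Corollary \ref{formaS} is essentially a direct unpacking of the normal form in Proposition \ref{FFNF_prop} via the Implicit Function Theorem. The only point requiring mild care is verifying $g(y)=\er(y^2)$ rather than merely $g(y)=\er(y)$; this follows from $g'(0)=0$, which in turn rests on the fact that the normal form was constructed so that the linear part of $Yf$ on $\Sigma$ is exactly $x$ with no $y$-term. Ensuring that the $\er$ remainders in \eqref{FFNF} genuinely produce only quadratic-and-higher corrections to $g$ is the single computation I would carry out explicitly.
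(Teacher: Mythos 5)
Your proposal is correct and coincides with the paper's own (outlined) proof: the paper likewise obtains the result directly from Proposition \ref{FFNF_prop} together with the Implicit Function Theorem, and your computation of $Xf=\dg y$, the solution $x=g(y)$ of $Yf|_{\Sigma}=0$, and the verification $g(0)=g'(0)=0$ giving $g(y)=\er(y^{2})$ is exactly the unpacking the authors intend.
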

\begin{proof}[Outline of the Proof]
	It follows directly from Proposition \ref{FFNF_prop} and the Implicit Function Theorem. 
\end{proof}

\begin{proposition}\label{SVF_FF}
	
	Let $Z=(X,Y)\in\Or$ be a nonsmooth vector field having a fold-fold point at $p$ such that $S_{X}\pitchfork S_{Y}$ at $p$. Then, the normalized sliding vector field of $Z$ has a singularity at $p$ and it is given by
	
	\begin{equation}\label{foldNF2}
	F_{Z}^{N}(x,y)=\left(\begin{array}{cc}
	\alpha & -\delta \gamma \\
	1 & -\delta \beta
	\end{array}\right)\cdot \left(\begin{array}{c}
	x\\
	y
	\end{array}\right)  +\er(|(x,y)|^{2}),
	\end{equation}
	in the normal coordinates of $Z$ at $p$, where $\dg=\sgn(X^{2}f(p))$, $\sgn(\gamma)=\sgn(Y^{2}f(p))$, $\ag,\bg,\cg\in\rn{}$. 	
\end{proposition}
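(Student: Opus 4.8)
The plan is to compute the normalized sliding vector field directly from its definition using the normal form coordinates established in Proposition \ref{FFNF_prop}. Recall that by Definition \ref{NSVF}, we have
\begin{equation}
F_{Z}^{N}(p)=Yf(p)X(p)-Xf(p)Y(p),
\end{equation}
and that in the normal coordinates we have $f(x,y,z)=z$, so the Lie derivatives $Xf$ and $Yf$ are simply the third components of $X$ and $Y$ respectively. From \eqref{FFNF}, I read off $Xf(x,y,z)=\dg y$ and $Yf(x,y,z)=x+\er(|(x,y,z)|^{2})$. Since the sliding region lies in $\s=\{z=0\}$, I will evaluate everything at $z=0$ and regard $F_{Z}^{N}$ as a planar vector field in the $(x,y)$ variables.

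First I would substitute the components of $X$ and $Y$ from the normal form into the defining formula. The first component of $F_{Z}^{N}$ becomes $Yf\cdot X_{1}-Xf\cdot Y_{1}=(x)\,\ag-(\dg y)(\cg)+\er(|(x,y)|^{2})=\ag x-\dg\cg y+\er(|(x,y)|^{2})$, and the second component becomes $Yf\cdot X_{2}-Xf\cdot Y_{2}=(x)(1)-(\dg y)(\bg)+\er(|(x,y)|^{2})=x-\dg\bg y+\er(|(x,y)|^{2})$. Collecting the linear terms into matrix form yields exactly the matrix
\begin{equation}
\left(\begin{array}{cc}
\alpha & -\delta\gamma\\
1 & -\delta\beta
\end{array}\right),
\end{equation}
acting on $(x,y)^{T}$, which matches \eqref{foldNF2}. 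The higher-order remainders from the $\er$-terms in $Y$, as well as the products of linear terms with these remainders, all collect into the $\er(|(x,y)|^{2})$ error, since multiplying a linear factor by an $\er(|(x,y,z)|^{2})$ term at $z=0$ produces a term of order at least two.

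The claim that $p$ is a singularity of $F_{Z}^{N}$ follows immediately once the formula is established: at $p=(0,0)$ both $Xf$ and $Yf$ vanish (this is precisely the fold-fold condition $Xf(p)=Yf(p)=0$), so each summand in the defining expression vanishes and $F_{Z}^{N}(p)=0$. I do not anticipate a genuine obstacle here, as the computation is essentially algebraic; the only point demanding care is the careful bookkeeping of the error terms to confirm that all contributions beyond the displayed linear part are genuinely $\er(|(x,y)|^{2})$, and that restricting to $z=0$ is legitimate because $\s^{s}\subset\{z=0\}$ and the sliding dynamics are governed by the tangential components. The signs $\dg=\sgn(X^{2}f(p))$ and $\sgn(\cg)=\sgn(Y^{2}f(p))$ are inherited directly from Proposition \ref{FFNF_prop} and require no further verification.
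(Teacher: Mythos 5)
Your proposal is correct and is precisely the argument the paper intends: the paper's proof is the one-line outline ``it follows directly from the expression of $Z$ in this coordinate system,'' and your computation simply carries out that direct substitution of the normal form \eqref{FFNF} into Definition \ref{NSVF}, with the error bookkeeping (in particular $y\cdot\er(|(x,y)|)=\er(|(x,y)|^{2})$ at $z=0$) handled correctly. The observation that $F_{Z}^{N}(p)=0$ because $Xf(p)=Yf(p)=0$ at a fold-fold point is likewise exactly what the paper leaves implicit.
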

\begin{proof}[Outline of the Proof]
	It follows directly from the expression of $Z$ in this coordinate system.
\end{proof}

Finally, we can classify a fold-fold singularity in four topologically distinct classes:

\begin{definition}
	A fold-fold point $p$ of $Z=(X,Y)\in\Or$ is said to be:
	\begin{itemize}
		\item a \textbf{visible fold-fold} if $X^{2}f(p)>0$ and $Y^{2}f(p)<0$;
		\item an \textbf{invisible-visible fold-fold} if $X^{2}f(p)<0$ and $Y^{2}f(p)<0$;	
		\item a \textbf{visible-invisible fold-fold} if $X^{2}f(p)>0$ and $Y^{2}f(p)>0$;			
		\item an \textbf{invisible fold-fold} if $X^{2}f(p)<0$ and $Y^{2}f(p)>0$, in this case, $p$ is also called a \textbf{T-singularity}.				
	\end{itemize}
\end{definition}

\begin{remark}
	Notice that the visible-invisible case can be obtained from the invisible-visible one by performing an orientation reversing change of coordinates. Also, we refer a visible, invisible-visible/visible-invisible, invisible as a hyperbolic, parabolic, elliptic fold-fold, respectively.
\end{remark}

\begin{figure}[H]
	\centering
	\bigskip
	\begin{overpic}[width=13cm]{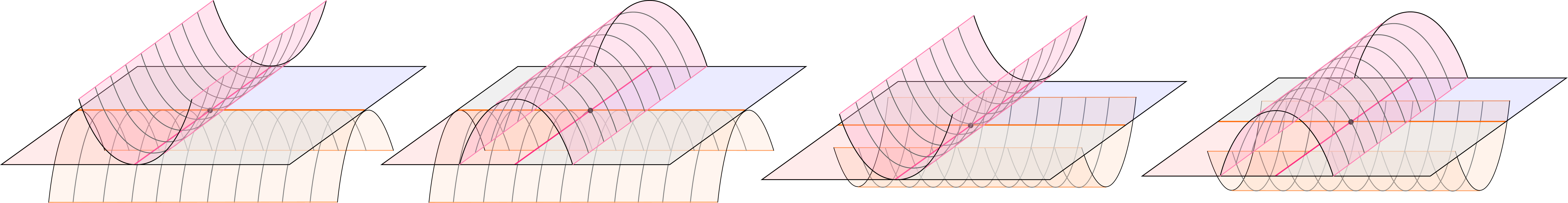}
		\put(11,-3){{\footnotesize (a)}}						
		\put(35,-3){{\footnotesize (b)}}
		\put(60,-3){{\footnotesize (c)}}	
		\put(84,-3){{\footnotesize (d)}}	
		\put(0,5){{\footnotesize $\s$}}		
		\put(7,11){{\footnotesize $X$}}	
		\put(1,0){{\footnotesize $Y$}}																					
	\end{overpic}
	\bigskip
	\caption{Fold-Fold Singularity: (a) Hyperbolic, (b,c) Parabolic and (d) Elliptic.}	
\end{figure}

\subsection{Sliding Dynamics}\label{slidingsection}

In this subsection we discuss the sliding dynamics around a fold-fold singularity. This is a matured topic which has been well developed in \cite{J3,F}. 

From Proposition \ref{FFNF_prop} and Lemma \ref{sliding}, we already know the behavior of the sliding vector field near a fold-fold singularity in a generic scenario (not only for the truncated system).

Let $Z=Z(\ag,\bg,\cg)\in\Or$ having a fold-fold singularity at $p$, and consider its normalized sliding vector field $F_{Z}^{N}$ in normal coordinates.

Consider:
$$
\begin{array}{l}
R_{E}^{1}= \{(\ag,\bg,\cg)\in \rn{2}\times \R^{+};\ \ag\bg>\cg \textrm{ and }\ag<0,\ \bg<0\} \\

R_{E}^{2}= \rn{2}\times \R^{+}\setminus \overline{R_{I}^{1}}\\

R_{H}^{1}= \{(\ag,\bg,\cg)\in \rn{2}\times \R^{-};\ \ag\bg<\cg \textrm{ and }\ag>0,\ \bg<0\} \\

R_{H}^{2}= \rn{2}\times \R^{-}\setminus \overline{R_{V}^{1}}\\

R_{P}^{1}= \{(\ag,\bg,\cg)\in \rn{2}\times \R^{-};\ \ag\bg<\cg \textrm{ and }\bg-\ag>-2\sqrt{-\cg}\}\\

R_{P}^{2}= \{(\ag,\bg,\cg)\in \rn{2}\times \R^{-};\ \ag\bg<\cg \textrm{ and } \ag>0\}\\

R_{P}^{3}= \{(\ag,\bg,\cg)\in \rn{2}\times \R^{-};\ \ag\bg>\cg,\ \bg+\ag>0 \textrm{ and } \bg-\ag<-2\sqrt{\cg}\}\\

R_{P}^{4}= \{(\ag,\bg,\cg)\in \rn{2}\times \R^{-};\ \ag\bg>\cg,\ \bg+\ag<0 \textrm{ and } \bg-\ag<-2\sqrt{-\cg}\}
\end{array}
$$

We claim that:

\textbf{Claim 1:} If $p$ is an elliptic fold-fold singularity and $(\ag,\bg,\cg)\in R^{1}_{E}$ then $F_{Z}$ has an invariant manifold $W$ in $\s^{s}$ passing through $p$ and each orbit of $F_{Z}$ is transverse to $S_{Z}$ and reaches $p$ asymptotically to $W$ (for a finite positive time in $\s^{ss}$ and negative time in $\s^{us}$).

\textbf{Claim 2:} If $p$ is an elliptic fold-fold singularity and $(\ag,\bg,\cg)\in R^{2}_{E}$ then $F_{Z}$ has an invariant manifold $W$ in $\s^{s}$ passing through $p$ and each orbit is transverse to $S_{Z}$ and does not reach $p$, with exception of $W$.

\textbf{Claim 3:} If $p$ is a hyperbolic fold-fold singularity and $(\ag,\bg,\cg)\in R^{1}_{H}$ (resp. $(\ag,\bg,\cg)\in R^{2}_{H}$ ) then $F_{Z}$ is of the same type of claim $1$ (resp. claim $2$) for reverse time.

\textbf{Claim 4:} If $p$ is a parabolic fold-fold singularity and $(\ag,\bg,\cg)\in R^{1}_{P}$ then each orbit in $\s^{ss}$ (resp. $\s^{us}$) is transverse to $S_{X}$ (resp. $S_{Y}$) and reaches $S_{Y}$ (resp. $S_{X}$) transversally for a positive finite time. In this case we say that $F_{Z}$ has transient behavior in $\s^{s}$.

\textbf{Claim 5:} If $p$ is a parabolic fold-fold singularity and $(\ag,\bg,\cg)\in R^{2}_{P}$  then there exist two invariant manifolds $W_{1}$ and $W_{2}$ in $\s^{s}$ passing through $p$ which divides $\s^{ss}$ (and $\s^{us}$) in three sectors. The intermediate sector is of hyperbolic type and in the other sectors the orbits are transversal to $S_{Z}$ and goes away from $p$ (the orientation of the orbits is given in Figure \ref{fig:sliding}).


\textbf{Claim 6:} If $p$ is a parabolic fold-fold singularity and $(\ag,\bg,\cg)\in R^{3}_{P}$  then there exist two invariant manifolds $W_{1}$ and $W_{2}$ in $\s^{s}$ passing through $p$ which divides $\s^{ss}$ in three sectors. In the intermediate sector each orbit reaches $p$ for a finite positive time asymptotically to $W_{1}$. In the left one each orbit is transverse to $S_{Y}$ and reaches $p$ for a finite positive time asymptotically to $W_{1}$. In the right one, each orbit is transverse to $S_{X}$ and goes away from $p$. The behavior in $\s^{us}$ is similar and can be seen in Figure \ref{fig:sliding}.

\textbf{Claim 7:} If $p$ is a parabolic fold-fold singularity and $(\ag,\bg,\cg)\in R^{4}_{P}$  then $F_{Z}$ has the same behavior as in claim $6$ for reverse time and changing the role of $W_{1}$ and $W_{2}$, $S_{X}$ and $S_{Y}$, right and left.

\textbf{Claim 8:} If $(\ag,\bg,\cg)$ is not in any of these regions then $F_{Z}$ presents bifurcations in $\s^{s}$.

All these claims can be straightforward verified by analyzing the linear part of the normalized sliding vector field $F_{Z}^{N}$. We omitted the proofs due to the limitation of space.

\begin{figure}[H] 
	\centering
	\bigskip
	\begin{overpic}[width=15cm]{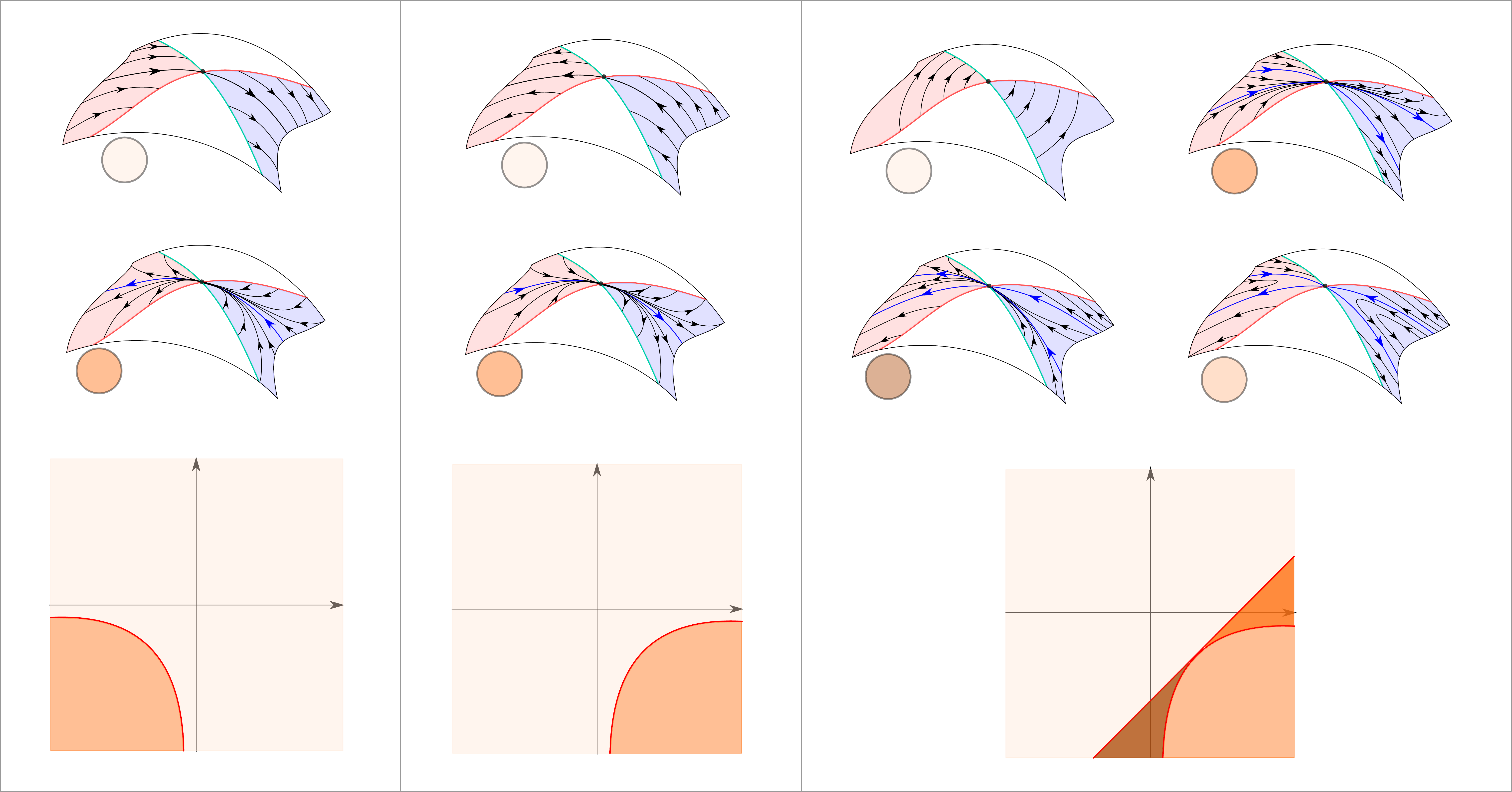}
		\put(12,-3){{\footnotesize (a)}}						
		\put(38,-3){{\footnotesize (b)}}
		\put(75,-3){{\footnotesize (c)}}
		\put(3,45){{\footnotesize $\s$}}
		\put(9,50.5){{\footnotesize $S_{X}$}}	
		\put(21,47){{\footnotesize $S_{Y}$}}
		\put(19.4,42){{\scriptsize $W$}}
		\put(19.4,27.9){{\scriptsize $W$}}	
		\put(45.7,27.9){{\scriptsize $W$}}
		\put(45.7,41.5){{\scriptsize $W$}}	
		\put(95.5,42.5){{\scriptsize $W_{1}$}}
		\put(93,40){{\scriptsize $W_{2}$}}
		\put(95.5,29){{\scriptsize $W_{1}$}}
		\put(92.8,26.5){{\scriptsize $W_{2}$}}	
		\put(73,29){{\scriptsize $W_{2}$}}
		\put(70.5,26.5){{\scriptsize $W_{1}$}}	
		\put(5.4,27.4){{\scriptsize $R_{E}^{1}$}}
		\put(6.8,41.3){{\scriptsize $R_{E}^{2}$}}	
		\put(31.7,27.2){{\scriptsize $R_{H}^{1}$}}
		\put(33.3,41.1){{\scriptsize $R_{H}^{2}$}}	
		\put(79.5,26.7){{\scriptsize $R_{P}^{2}$}}
		\put(80.3,40.5){{\scriptsize $R_{P}^{4}$}}			
		\put(57.5,27){{\scriptsize $R_{P}^{3}$}}			
		\put(59,40.5){{\scriptsize $R_{P}^{1}$}}	
		\put(23,12){{\scriptsize $\ag$}}	
		\put(49.5,11.5){{\scriptsize $\ag$}}	
		\put(86,11.5){{\scriptsize $\ag$}}										
		\put(12.5,22.5){{\scriptsize $\bg$}}	
		\put(39,22){{\scriptsize $\bg$}}	
		\put(76,22){{\scriptsize $\bg$}}																																																												
	\end{overpic}
	\bigskip
	\caption{Sliding dynamics near a fold-fold singularity of type elliptic (a), hyperbolic (b) and parabolic (c). In each case, the regions above are outlined in the $(\ag,\bg)$-parameter space for a fixed value of $\cg$.}\label{fig:sliding}	
\end{figure} 

\section{Proofs of Theorems A and D}

This section is devoted to prove Theorems A and D. In the sequel we develop some Lemmas and Propositions which will lead us to the proof of the Theorems.

Assume that $Z\in\Or$ has a T-singularity at $p$. Therefore, we have a first return map $\phi$ of $Z$ defined around $p$. In order to study the local structural stability of $Z$, it will be crucial to study the dynamics of $\phi$.
Now, we derive the existence and some properties of $\p$.

\begin{lemma}\label{inv}
	Let $Z=(X,Y)\in\Or$ be a nonsmooth vector field having a T-singularity at $p$ such that $S_{X}\pitchfork S_{Y}$ at $p$. There exist two involutions $\phi_{X}:(\s,p)\rightarrow(\s,p)$ and $\phi_{Y}:(\s,p)\rightarrow(\s,p)$ associated to the folds $X$ and $Y$ such that:
	\begin{itemize}
		\item $\fix(\phi_{X})=S_{X}$;
		\item $\fix(\phi_{Y})=S_{Y}$;
		\item $\phi=\phi_{X}\circ \phi_{Y}$ is a first return map of $Z$ such that $\phi(p)=p$.
	\end{itemize}
\end{lemma}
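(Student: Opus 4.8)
The plan is to realize each involution as the \emph{second intersection} map obtained by flowing along the corresponding fold vector field, and then to set $\phi=\phi_{X}\circ\phi_{Y}$. I would begin with $\phi_{X}$. Let $\varphi^{X}_{t}$ denote the local flow of $X$ and, for $q$ in a neighborhood of $p$ in $\s$, set $F(t,q)=f(\varphi^{X}_{t}(q))$. Since $f$ vanishes identically on $\s$ we have $F(0,q)=0$, so I can factor $F(t,q)=t\,\widetilde{F}(t,q)$ with $\widetilde{F}$ of class $\Cr$, $\widetilde{F}(0,q)=Xf(q)$ and $\partial_{t}\widetilde{F}(0,p)=\tfrac12 X^{2}f(p)$. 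Because $p$ is a fold point of $X$ we have $Xf(p)=0$ and $X^{2}f(p)\neq 0$, so $\widetilde{F}(0,p)=0$ while $\partial_{t}\widetilde{F}(0,p)\neq 0$. The Implicit Function Theorem then yields a unique $\Cr$ function $\tau$ with $\tau(p)=0$ and $\widetilde{F}(\tau(q),q)=0$, and I define $\phi_{X}(q)=\varphi^{X}_{\tau(q)}(q)$. By construction $F(\tau(q),q)=\tau(q)\widetilde{F}(\tau(q),q)=0$, so $\phi_{X}(q)\in\s$.

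Next I would verify the three defining properties. The fixed-point set is immediate from uniqueness in the Implicit Function Theorem: $\tau(q)=0$ exactly when $\widetilde{F}(0,q)=Xf(q)=0$, i.e. $\fix(\phi_{X})=S_{X}$. For the involution property, writing $q'=\phi_{X}(q)$ the identity $\varphi^{X}_{-\tau(q)}(q')=q\in\s$ exhibits $-\tau(q)$ as the (small) root of $\widetilde{F}(\cdot,q')$ selected by the Implicit Function Theorem, whence $\tau(q')=-\tau(q)$ and $\phi_{X}\circ\phi_{X}=\mathrm{id}$. Finally, $D\phi_{X}(p)$ fixes the direction of $S_{X}$, giving eigenvalue $+1$; since an involution is diagonalizable with eigenvalues in $\{\pm1\}$ and the invisibility of the fold forces $\phi_{X}\neq\mathrm{id}$ near $p$ (were $D\phi_{X}(p)=\mathrm{Id}$, linearization of the involution would make $\phi_{X}$ locally the identity), the transverse eigenvalue must be $-1$, so $\det D\phi_{X}(p)=-1$. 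Interchanging the roles of $X$ and $Y$ and using $Yf(p)=0$, $Y^{2}f(p)\neq0$, the same argument produces $\phi_{Y}$ with $\fix(\phi_{Y})=S_{Y}$.

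Setting $\phi=\phi_{X}\circ\phi_{Y}$, the hypothesis $p\in S_{X}\cap S_{Y}$ gives $\phi_{Y}(p)=p$ and $\phi_{X}(p)=p$, hence $\phi(p)=p$. It remains to identify $\phi$ with the Filippov first return map. Here I would pass to the normal coordinates of Corollary \ref{formaS} and exploit the invisible-invisible configuration of a T-singularity: a trajectory issuing from the crossing region $\s^{c}$ first enters $M^{-}$, where invisibility of the $Y$-fold makes the $Y$-orbit curve back and meet $\s$ again precisely at $\phi_{Y}(q)$; it then enters $M^{+}$, where invisibility of the $X$-fold returns it to $\s$ at $\phi_{X}(\phi_{Y}(q))$. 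Thus $\phi$ coincides with the first return of crossing orbits on a neighborhood of $p$ in $\s^{c}$, and $\phi(p)=p$.

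The routine content is the Implicit Function Theorem construction, which I expect to present no difficulty. The main obstacle is the last step: one must check that in the invisible-invisible contact both half-orbits stay on the correct side of $\s$ and genuinely return, so that the abstractly defined composition $\phi_{X}\circ\phi_{Y}$ really equals the dynamically defined first return map, together with the orientation bookkeeping that fixes the order of composition.
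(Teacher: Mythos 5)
Your proposal is correct and takes essentially the same route as the paper's source: the paper does not prove Lemma \ref{inv} internally but cites Lemma 1 of \cite{BMT}, where the involutions are constructed exactly as you do, by writing $f(\varphi^{X}_{t}(q))=t\,\widetilde{F}(t,q)$ and applying the Implicit Function Theorem at the fold (using $Xf(p)=0$, $X^{2}f(p)\neq 0$) to select the nontrivial return time, with $\fix(\phi_{X})=S_{X}$, the involutive property, and $\det D\phi_{X}(p)=-1$ following as in your argument. Your final identification of $\phi_{X}\circ\phi_{Y}$ with the dynamical first return map through the invisible-invisible geometry (orbit through $M^{-}$ via $\phi_{Y}$, then through $M^{+}$ via $\phi_{X}$) is precisely the intended content, so no gap remains.
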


The proof of Lemma \ref{inv} can be found in \cite{BMT} (Lemma $1$). A straightforward verification shows the following results:

\begin{lemma} \label{comut}
	If $\phi=\p\circ\psi$, where $\p$ and $\psi$ are involutions of $\R^{2}$ at $0$, then $\phi^{n}\circ\p=\p\circ \phi^{-n}$ and $\psi\circ\phi^{n}=\phi^{-n}\circ\psi,$	for each $n\in\Z$.			
\end{lemma}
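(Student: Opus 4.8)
The plan is to reduce both identities to a single conjugation fact that holds because $\phi$ is the composition of two involutions. First I would record that, since $\p^{2}=\psi^{2}=\mathrm{id}$, the inverse of $\phi=\p\circ\psi$ is $\phi^{-1}=\psi\circ\p$. The crucial observation is then that \emph{each} of the two involutions reverses $\phi$. Indeed, a direct computation gives
\[
\p\circ\phi\circ\p=\p\circ(\p\circ\psi)\circ\p=\psi\circ\p=\phi^{-1},
\]
and symmetrically
\[
\psi\circ\phi\circ\psi=\psi\circ(\p\circ\psi)\circ\psi=\psi\circ\p=\phi^{-1},
\]
so that both $\p$ and $\psi$ conjugate $\phi$ to its inverse.

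Next I would promote these two base relations to arbitrary integer powers. Since conjugation by a fixed bijection is an automorphism of the group generated by $\phi$, the relation $\p\circ\phi\circ\p^{-1}=\phi^{-1}$ yields $\p\circ\phi^{n}\circ\p^{-1}=(\phi^{-1})^{n}=\phi^{-n}$ for every $n\in\Z$; using $\p^{-1}=\p$ and multiplying on the left by $\p$ gives the first stated identity $\phi^{n}\circ\p=\p\circ\phi^{-n}$. Alternatively, one proves it by induction: assuming $\phi^{n}\circ\p=\p\circ\phi^{-n}$, one computes
\[
\phi^{n+1}\circ\p=\phi\circ(\p\circ\phi^{-n})=(\phi\circ\p)\circ\phi^{-n}=(\p\circ\phi^{-1})\circ\phi^{-n}=\p\circ\phi^{-(n+1)},
\]
where the step $\phi\circ\p=\p\circ\phi^{-1}$ is just the $n=1$ case extracted from the conjugation relation, and the negative powers are handled symmetrically. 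The second identity $\psi\circ\phi^{n}=\phi^{-n}\circ\psi$ follows in exactly the same way from $\psi\circ\phi\circ\psi=\phi^{-1}$.

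Because the whole argument is purely group-theoretic and uses nothing beyond $\p^{2}=\psi^{2}=\mathrm{id}$, there is no genuine analytic obstacle here; the only point requiring care is the bookkeeping of the order of composition and the uniform treatment of negative $n$, both of which are disposed of cleanly by the conjugation (homomorphism) viewpoint, valid for all $n\in\Z$ at once.
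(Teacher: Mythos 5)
Your proof is correct and is precisely the ``straightforward verification'' the paper invokes without writing out: the paper gives no explicit argument for this lemma, and your reduction to the conjugation relations $\p\circ\phi\circ\p=\phi^{-1}$ and $\psi\circ\phi\circ\psi=\phi^{-1}$ (valid since $\p^{2}=\psi^{2}=\mathrm{id}$, so $\phi^{-1}=\psi\circ\p$), followed by passage to all integer powers, is the intended routine computation. Nothing is missing; the bookkeeping for negative $n$ is handled correctly by the homomorphism viewpoint.
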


\begin{proposition}\label{reversibility}
	If $\phi=\p\circ\psi$, where $\p$ and $\psi$ are involutions of $\s$ at $p$, then the invariant manifolds $W^{s}$ and $W^{u}$ of $\phi$ at $p$ are interchanged by $\p$ and $\psi$ in the following way: $$\psi(W^{s})\subset W^{u} \textrm{ and }\p(W^{u})\subset W^{s}.$$
\end{proposition}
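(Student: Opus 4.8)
The plan is to combine the intrinsic dynamical description of the invariant manifolds with the two reversibility identities recorded in Lemma \ref{comut}. Near the fixed point $p$, the local stable and unstable manifolds of $\phi$ admit the characterizations
\begin{equation*}
W^{s}=\{q:\ \phi^{n}(q)\to p \text{ as } n\to+\infty\},\qquad W^{u}=\{q:\ \phi^{-n}(q)\to p \text{ as } n\to+\infty\},
\end{equation*}
understood as germs at $p$, after restricting to a small neighborhood on which the relevant forward (resp.\ backward) orbit remains. Since $\p$ and $\psi$ are involutions fixing $p$ (indeed $p\in S_{X}=\fix(\p)$ and $p\in S_{Y}=\fix(\psi)$ by Lemma \ref{inv}), both are continuous germs with $\p(p)=\psi(p)=p$, and this is all I will use about them beyond the algebraic relations.

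First I would prove $\psi(W^{s})\subset W^{u}$. Fix $q\in W^{s}$, so $\phi^{n}(q)\to p$. By the identity $\psi\circ\phi^{n}=\phi^{-n}\circ\psi$ of Lemma \ref{comut}, the backward orbit of $\psi(q)$ satisfies $\phi^{-n}(\psi(q))=\psi(\phi^{n}(q))$ for every $n\in\N$. Letting $n\to+\infty$ and using the continuity of $\psi$ together with $\psi(p)=p$, the right-hand side tends to $p$; hence $\phi^{-n}(\psi(q))\to p$, i.e.\ $\psi(q)\in W^{u}$. The inclusion $\p(W^{u})\subset W^{s}$ follows symmetrically: for $q\in W^{u}$ we have $\phi^{-n}(q)\to p$, and the identity $\phi^{n}\circ\p=\p\circ\phi^{-n}$ gives $\phi^{n}(\p(q))=\p(\phi^{-n}(q))\to\p(p)=p$, so that $\p(q)\in W^{s}$.

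The computation itself is routine, so the only point requiring care — and the reason the statement asserts inclusions rather than equalities — is the passage between the local (germ) and global pictures. Working with the global invariant manifolds one in fact obtains equality, since by the same argument $\psi$ also maps $W^{u}$ into $W^{s}$ and $\psi^{2}=\mathrm{id}$ forces $\psi(W^{s})=W^{u}$; but at the level of germs the involutions need not preserve the neighborhood on which the local unstable manifold is defined, so after the unavoidable shrinking of domains only the one-sided inclusions survive. I would therefore state the convergence on a fixed small neighborhood of $p$ and observe that $\psi(W^{s})$ and $\p(W^{u})$ land inside, though possibly not onto, the corresponding local manifolds.
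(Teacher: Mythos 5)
Your proof is correct and is precisely the ``straightforward verification'' the paper leaves implicit: it applies the commutation identities $\phi^{n}\circ\p=\p\circ\phi^{-n}$ and $\psi\circ\phi^{n}=\phi^{-n}\circ\psi$ of Lemma \ref{comut} to the dynamical characterization of $W^{s}$ and $W^{u}$, together with continuity of the involutions and $\p(p)=\psi(p)=p$. Your closing remark on the germ-versus-global distinction, explaining why only one-sided inclusions are asserted, is a sound refinement of that same argument rather than a departure from it.
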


Now, using the normal coordinates of $Z=(X,Y)$ at an elliptic fold-fold singularity  we obtain the following expressions for the associated involutions. Notice that the involution $\phi_{X}$ is completely determined in these coordinates.

\begin{lemma}\label{forma-inv}
	Let $Z=(X,Y)\in\Or$ be a nonsmooth vector field having a T-singularity at $p$ such that $S_{X}\pitchfork S_{Y}$ at $p$. Consider the normal coordinates $(x,y,z)$ of $Z$ at $p$. Then:
	\begin{equation}
	\phi_{X}(x,y)=(x-2\ag y, -y) \textrm{ and } \phi_{Y}(x,y)=\left(
	-x,	-\frac{2\bg}{\cg}x+y\right)+h.o.t.,
	\end{equation}
	in these coordinates, where $\ag,\bg,\cg$ are the normal parameters of $Z$ at $p$.
\end{lemma}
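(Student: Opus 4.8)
The existence of the two fold involutions $\phi_{X},\phi_{Y}$ is already guaranteed by Lemma \ref{inv}, so the task is only to compute their $1$-jets at $p=0$ in the normal coordinates. The plan is to realize each involution as a first-return map of the corresponding straightened field to $\s=\{z=0\}$, i.e.\ to integrate the flow from a point $(x_{0},y_{0},0)$, find the nonzero time of flight for which the orbit meets $\s$ again, and evaluate. I would begin with $\phi_{X}$, which is the easiest because the normal form of $X$ carries no higher order terms. Since $p$ is a T-singularity we have $\dg=\sgn(X^{2}f(p))=-1$, so $X(x,y,z)=(\ag,1,-y)$ and the flow is solved explicitly:
\begin{equation*}
x(t)=x_{0}+\ag t,\qquad y(t)=y_{0}+t,\qquad z(t)=-y_{0}t-\tfrac{1}{2}t^{2}.
\end{equation*}
The nonzero root of $z(t)=0$ is $t=-2y_{0}$, and substituting it gives $\phi_{X}(x_{0},y_{0})=(x_{0}-2\ag y_{0},-y_{0})$ \emph{exactly}, with $\fix(\phi_{X})=\{y=0\}=S_{X}$ as expected.

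For $\phi_{Y}$ the same strategy applies, but now only the linear part is sought because $Y$ carries higher order terms. Writing $Y=(Y_{1},Y_{2},Y_{3})$ with $Y_{1}(0)=\cg$, $Y_{2}(0)=\bg$ and $Y_{3}=Yf=x+\er(|(x,y,z)|^{2})$, I would expand the $z$-component of the flow from $(x_{0},y_{0},0)$ along $Y$:
\begin{equation*}
\dot z(0)=Yf(x_{0},y_{0},0)=x_{0}+\er(|(x_{0},y_{0})|^{2}),\qquad \ddot z(0)=Y^{2}f(x_{0},y_{0},0)=\cg+\er(|(x_{0},y_{0})|).
\end{equation*}
Because the contact is quadratic ($Y^{2}f(p)=\cg>0$ for a T-singularity, in particular $\cg\neq 0$), one may factor $z(t)=t\,\widetilde z(t,x_{0},y_{0})$ with $\widetilde z$ smooth and $\partial_{t}\widetilde z(0,0,0)=\tfrac{1}{2}\cg\neq 0$, so the Implicit Function Theorem yields a smooth time of flight
\begin{equation*}
\tau(x_{0},y_{0})=-\frac{2}{\cg}x_{0}+\er(|(x_{0},y_{0})|^{2}).
\end{equation*}
Substituting $\tau$ into the $x$- and $y$-components and using $\dot x(0)=\cg+\er(1)$, $\dot y(0)=\bg+\er(1)$ gives $x(\tau)=x_{0}+\cg\tau+\cdots=-x_{0}+\text{h.o.t.}$ and $y(\tau)=y_{0}+\bg\tau+\cdots=y_{0}-\tfrac{2\bg}{\cg}x_{0}+\text{h.o.t.}$, which is precisely the claimed formula for $\phi_{Y}$.

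The only delicate point — and the main obstacle — is the justification of the time-of-flight expansion: at the fold both $z$ and $t$ vanish, so one is effectively dividing $z(t)$ by $t$, and this is exactly where the quadratic nondegeneracy $Y^{2}f(p)=\cg\neq 0$ enters, through the factorization $z=t\,\widetilde z$ and the Implicit Function Theorem. As an independent check one can argue purely linearly: $D\phi_{Y}(0)$ is a linear involution with $\det=-1$, hence has eigenvalues $\pm 1$; its $(+1)$-eigenline is $T_{0}S_{Y}$, which by Corollary \ref{formaS} is the $y$-axis $\{x=0\}$, while its $(-1)$-eigendirection is the direction $(Y_{1}(0),Y_{2}(0))=(\cg,\bg)$ of the projected field along which nearby orbits reconnect. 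Since a linear involution with $\det=-1$ is uniquely determined by its two eigenlines, a direct computation shows that $\bigl(\begin{smallmatrix}-1&0\\ -2\bg/\cg&1\end{smallmatrix}\bigr)$ is the only such matrix, recovering the linear part of $\phi_{Y}$ and confirming the computation.
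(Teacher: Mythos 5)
Your proposal is correct and is exactly the computation the paper intends: the paper states this lemma as a direct consequence of the normal form of Proposition \ref{FFNF_prop} (integrating the straightened field $X=(\ag,1,-y)$ explicitly, and extracting the time of flight for $Y$ from the quadratic contact $Y^{2}f(p)=\cg\neq 0$ via the factorization $z=t\,\widetilde z$ and the Implicit Function Theorem), and your expansions for $\tau$, $x(\tau)$ and $y(\tau)$ reproduce the stated linear parts with the correct error orders. The closing linear-algebra check (eigenlines $T_{0}S_{Y}=\{x=0\}$ with eigenvalue $+1$ and $(\cg,\bg)$ with eigenvalue $-1$ determining the involution's $1$-jet) is a sound extra confirmation not present in the paper.
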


Finally, we associate the local structural stability of $Z$ at an elliptic fold-fold singularity with the local structural stability of the pair of involutions associated to $Z$.

\begin{lemma}\label{rel}
	Let $Z_{0}=(X_{0},Y_{0})\in\Or$ such that $p$ is a T-singularity for $Z_{0}$. If $Z_{0}$ is locally structurally stable at $p$ in $\Or$ then the pair of involutions $(\phi_{X_{0}},\phi_{Y_{0}})$ associated to $Z_{0}$ is locally and simultaneous structurally stable at $0$ in $W^{r}$.
\end{lemma}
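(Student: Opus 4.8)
The plan is to reduce the statement to a single scalar condition by means of Theorem~\ref{ssinv}, and then argue by contraposition. Since Theorem~\ref{ssinv} asserts that $(\phi_{X_{0}},\phi_{Y_{0}})$ is locally and simultaneously structurally stable at $0$ if and only if $0$ is a hyperbolic fixed point of $\phi_{0}=\phi_{X_{0}}\circ\phi_{Y_{0}}$, the lemma is equivalent to the implication ``$Z_{0}$ locally structurally stable at $p$ $\Rightarrow$ $0$ is a hyperbolic fixed point of $\phi_{0}$''. First I would prove its contrapositive: assuming $0$ is \emph{not} a hyperbolic fixed point of $\phi_{0}$, I will produce systems arbitrarily close to $Z_{0}$ in $\Or$ that are not $\s$-equivalent to $Z_{0}$ at $p$, contradicting structural stability.

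The crux is an invariance statement: any $\s$-equivalence $h\colon U\to V$ between two nonsmooth vector fields with T-singularities, once restricted to $\s$, conjugates their first return maps up to inversion. Indeed $h$ preserves $\s$, carries orbits to orbits and is order-preserving, so every arc of an $X_{0}$-orbit lying in $\overline{M^{+}}$ with both endpoints on $\s$ is sent to an analogous $X$-arc of the perturbed field; reading off endpoints shows $h|_{\s}$ intertwines the involutions of Lemma~\ref{inv}, hence their compositions. The delicate point is that $h$ may interchange $M^{+}$ and $M^{-}$, which can only swap the two involutions and thereby replace $\phi_{0}$ by $\phi_{0}^{-1}$ (using that $\phi_{X},\phi_{Y}$ are involutions, cf. Lemma~\ref{comut}). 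Since passing to the inverse preserves both hyperbolicity of the fixed point and the modulus of the argument of complex eigenvalues, the topological type of the fixed point $p$ of the first return map, and in the elliptic case the argument of its eigenvalues, are invariants of $\s$-equivalence. This step — controlling how $h$ treats the $M^{+}$- and $M^{-}$-legs and the two fold curves — is the main obstacle, and is where the geometry of the T-singularity is genuinely used.

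With the invariant established, the realization of the required perturbations is explicit. By Lemma~\ref{forma-inv}, in normal coordinates
\begin{equation*}
D\phi_{X_{0}}(0)=\begin{pmatrix} 1 & -2\ag\\ 0 & -1\end{pmatrix},\qquad D\phi_{Y_{0}}(0)=\begin{pmatrix} -1 & 0\\ -\tfrac{2\bg}{\cg} & 1\end{pmatrix},
\end{equation*}
so $D\phi_{0}(0)=D\phi_{X_{0}}(0)\,D\phi_{Y_{0}}(0)$ has determinant $1$ and trace $T=-2+4\ag\bg/\cg$. Thus $0$ is non-hyperbolic exactly when $|T|\le 2$, i.e. $\ag\bg/\cg\in[0,1]$, the elliptic case being the open condition $\ag\bg/\cg\in(0,1)$ and the parabolic case the two endpoints. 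Because $T$ depends on the normal parameters with non-vanishing gradient there, arbitrarily small perturbations of $(\ag,\bg,\cg)$ — which keep $p$ a T-singularity, that condition being open — move $T$ in either direction while staying in $\Or$.

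Finally I would split the non-hyperbolic case. If $\phi_{0}$ is parabolic ($|T|=2$), perturbing $T$ to each side yields, arbitrarily near $Z_{0}$, both a system whose first return map has a hyperbolic (saddle) fixed point and one whose fixed point is elliptic; by the invariance of the fixed-point type these two are not $\s$-equivalent, so no neighborhood of $Z_{0}$ consists of systems equivalent to $Z_{0}$, and $Z_{0}$ is not structurally stable. If $\phi_{0}$ is elliptic ($|T|<2$), ellipticity is robust, but perturbing $(\ag,\bg,\cg)$ changes the argument $\theta=\arccos(T/2)$ of the eigenvalues; since $\theta$ is a topological invariant of $\s$-equivalence (the source of the infinitely many moduli), two such systems with distinct $\theta$ cannot be $\s$-equivalent, so again every neighborhood of $Z_{0}$ contains systems not equivalent to it. In both cases $Z_{0}$ fails to be locally structurally stable at $p$, which establishes the contrapositive and hence the lemma.
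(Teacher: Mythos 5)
Your architecture is genuinely different from the paper's, and it is worth noting how before pointing at the gap. The paper proves the lemma directly and ``softly'': it introduces the map $F:\V\rightarrow W^{r}$, $F(X,Y)=(\phi_{X},\phi_{Y})$, shows it is continuous and \emph{open} (every pair of involutions near $(\phi_{X_{0}},\phi_{Y_{0}})$ is realized by some nearby $Z$), and shows that any $\s$-equivalence between $Z_{0}$ and $Z\in\V$ induces on $\s$ a \emph{simultaneous} conjugacy of the pairs of involutions (the flow-arc argument you sketch is exactly the paper's derivation of $h\circ\phi_{X_{0}}=\phi_{X}\circ h$, via uniqueness of the return time). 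Structural stability of $Z_{0}$ then transfers through $F$ to the pair, with no eigenvalue computation and no use of Theorem \ref{ssinv} at all. You instead prove the contrapositive of the combined statement ``local structural stability of $Z_{0}$ implies hyperbolicity of $\phi_{0}$'' and recover the lemma through the ``if'' direction of Theorem \ref{ssinv}. This is logically legitimate and non-circular (Theorem \ref{ssinv} is quoted from \cite{T4}), but note that in the paper your target statement is Proposition \ref{rel2}, a \emph{corollary} of the lemma, and your instability construction is essentially a re-derivation of Proposition \ref{NH-unst}; so your route proves the harder quantitative statement in order to obtain the softer one. Your linear algebra is correct and consistent with Lemma \ref{posinv}: $\det d\phi_{0}(0)=1$, trace $T=-2+4\ag\bg/\cg$, and since $\cg>0$ at a T-singularity, non-hyperbolicity is $\ag\bg(\ag\bg-\cg)\le 0$. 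Your handling of a possible swap of $M^{+}$ and $M^{-}$ (replacing $\phi_{0}$ by $\phi_{0}^{-1}$, harmless for the invariants) is a point the paper leaves implicit.

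The genuine gap is in the elliptic case: you assert that the argument $\theta$ of the eigenvalues ``is a topological invariant of $\s$-equivalence'' and use this as a black box. That assertion is precisely the hard content of the instability proof and cannot be waved through: topological conjugacy of germs of diffeomorphisms does not in general detect eigenvalue arguments, and the paper does not claim full invariance of $\theta$ either. What the paper actually proves (Proposition \ref{NH-unst}, following \cite{BT}) is the weaker statement sufficient for instability: using that the induced conjugacy must also satisfy $h(S_{X_{0}})=S_{X}$, it blows up the fixed point, reads off the circle rotation $\overline{\psi_{0}}(\theta)=\theta+\tau$, and perturbs $\tau$ from rational to irrational (or vice versa), so that the orbits $\bigcup_{n}\phi_{0}^{n}(S_{X_{0}})$ and $\bigcup_{n}\phi^{n}(S_{X})$ have different topology (tangent to finitely many directions versus dense directions). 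Your proof needs either this argument or a citation of a Naishul-type invariance theorem; as written, the step is unsupported. A smaller instance of the same issue occurs in your parabolic case: the claim that a hyperbolic saddle germ cannot be conjugate to a germ with non-real unit-modulus eigenvalues needs a one-line justification (e.g., the fixed-point index of $\phi^{2}$ is $-1$ for the saddle and $+1$ in the elliptic case), which you also leave implicit. With the blow-up/density argument substituted for your invariance assertion, your contrapositive route does close, but it remains strictly heavier machinery than the paper's own direct proof of this lemma.
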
	
\begin{proof}
	In fact, since $p$ is a T-singularity of $Z_{0}$, there exist neighborhoods $\V$ of $Z_{0}$ in $\Or$ and $V$ of $p$ in $M$ such that, each $Z\in\V$ has a unique Teixeira singularity at $q(Z)\in V\cap \s$.
	
	Consider the map $F:\V\rightarrow W^{r}$ given by:
	\begin{equation}\label{expF}
	F(X,Y)=(\phi_{X},\phi_{Y}),
	\end{equation}
	where $\phi_{X}$ and $\phi_{Y}$ are the involutions at $(0,0)$ of $\rn{2}$ associated to $X$ and $Y$, respectively.
	
	From the continuous dependence of solutions with respect to initial conditions and parameters, it follows that $F$ is a continuous map.
	
	Moreover, there exists a neighborhood $\U$ of $(\phi_{X_{0}},\phi_{Y_{0}})$ in $W^{r}$, such that, for each $(\tau,\psi)\in\U$, there exists a vector field $Z=(X,Y)\in\V$ such that $\tau=\phi_{X}$ and $\psi=\phi_{Y}$, and it can be done in a continuous fashion. 
	
	Then, reducing $\V$ if necessary, it follows that $F:\V\rightarrow W^{r}$ is an open continuous map.
	
	Since $Z_{0}$ is locally structurally stable at $p$ in $\Or$, $\V$ can be reduced such that every $Z\in\V$ is topologically equivalent to $Z_{0}$. 
	
	Now, if $Z\in \V$, there exists a topological equivalence $h:V_{1}\rightarrow V_{2}$ between $Z_{0}$ and $Z$, where $V_{1}$ and $V_{2}$ are neighborhoods of $p$ in $M$ and $q(Z)\in V_{2}$. 
	
	In particular, it induces a homeomorphism $h:\s\cap V_{1}\rightarrow \s\cap V_{2}$ such that $h(p)=q(Z)$. Using coordinates, $(x,y,z)$ around $p$ and $(u,v,w)$ around $q(Z)$ such that $f(x,y,z)=z$ and $f(u,v,w)=w$, the induced homeomorphism $h$ can be seen as $h:U_{1}\rightarrow U_{2}$, where $U_{1}$ and $U_{2}$ are neighborhoods of $(0,0)$ in $\rn{2}$ and $h(0,0)=(0,0)$.
	
	Let $(x,y)\in\s-S_{X_{0}}$, then $\phi_{X_{0}}(x,y)=\varphi_{X_{0}}(t(x,y),(x,y,0))$. Now, since $h$ is a topological equivalence, it follows that:
	$$h(\phi_{X_{0}}(x,y))=\varphi_{X}(t_{X}, h(x,y))).$$
	
	Now, since $(x,y)\in\s-S_{X_{0}}$, it follows that $h(x,y)\in\s-S_{X}$, which means that $\phi_{X}(h(x,y))=\varphi_{X}(t(h(x,y)),h(x,y))$, with $t(h(x,y))\neq 0$.
	
	Notice that $h(x,y)\neq h(\phi_{X_{0}}(x,y))$, since $h$ is a homeomorphism and $(x,y)\neq \phi_{X_{0}}(x,y)$, which means that $\varphi_{X}(t_{X},h(x,y))\in \s$ and $\varphi_{X}(t_{X},h(x,y))\neq h(x,y)$. 
	By uniqueness of $t(h(x,y))\neq 0$, it follows that $t(h(x,y))=t_{X}$.
	
	Hence,
	\begin{equation} \label{equiv}
	h(\phi_{X_{0}}(x,y))=\phi_{X}(h(x,y)).
	\end{equation}
	
	It is trivial to see that \ref{equiv} is also true when $(x,y)\in S_{X_{0}}$, by observing that $h(S_{X_{0}})=S_{X}$. Hence $h$ is an equivalence between the involutions $\phi_{X_{0}}$ and $\phi_{X}$.
	
	Analogously, by changing the roles of $X$ and $Y$, it can be shown that $h$ is also an equivalence between the involutions $\phi_{Y_{0}}$ and $\phi_{Y}$.
	
	We conclude that $h$ is a (simultaneous) topological equivalence between the pairs of involutions $(\phi_{X_{0}},\phi_{Y_{0}})$ and $(\phi_{X},\phi_{Y})$.
	
	Since $Z$ is arbitrary in $\V$, it follows that every pair of involutions in $\U$ is topologically equivalent to $(\phi_{X_{0}},\phi_{Y_{0}})$, and since $\U$ is open in $W^{r}$, it follows that $(\phi_{X_{0}},\phi_{Y_{0}})$ is local and simultaneous structurally stable in $W^{r}$.
\end{proof}

The following result is obtained by combining Theorem \ref{ssinv}  and Lemma \ref{rel}.

\begin{proposition}\label{rel2}
	Let $Z_{0}\in\Or$ having a T-singularity at $p$, and let $(\phi_{X_{0}},\phi_{Y_{0}})$ be the pair of involutions of $\rn{2}$ at $(0,0)$ associated to $Z_{0}$. If $0$ is not a hyperbolic fixed point of $\phi_{Y_{0}}\circ\phi_{X_{0}}$, then $Z_{0}$ is locally structurally unstable at $p$.
\end{proposition}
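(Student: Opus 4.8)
The plan is to argue by contraposition, using Lemma \ref{rel} to transfer the question from nonsmooth vector fields to pairs of involutions, and then applying the classification Theorem \ref{ssinv} to detect the instability. Since the hypothesis of the proposition is exactly the failure of hyperbolicity of a composed involution, the two results should fit together immediately once I reconcile the order in which the involutions are composed.

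First I would observe that the hypothesis concerns $\phi_{Y_{0}}\circ\phi_{X_{0}}$, whereas Lemma \ref{rel} associates to $Z_{0}$ the ordered pair $(\phi_{X_{0}},\phi_{Y_{0}})$, for which Theorem \ref{ssinv} measures stability through the composition $\phi_{X_{0}}\circ\phi_{Y_{0}}$. These two compositions are conjugate: since $\phi_{X_{0}}$ is an involution, $\phi_{X_{0}}^{-1}=\phi_{X_{0}}$, and
\begin{equation*}
\phi_{X_{0}}\circ(\phi_{Y_{0}}\circ\phi_{X_{0}})\circ\phi_{X_{0}}^{-1}=\phi_{X_{0}}\circ\phi_{Y_{0}}.
\end{equation*}
Both maps fix $0$, so their derivatives at $0$ are conjugate linear isomorphisms and therefore share the same spectrum. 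Consequently $0$ is a hyperbolic fixed point of $\phi_{X_{0}}\circ\phi_{Y_{0}}$ if and only if it is a hyperbolic fixed point of $\phi_{Y_{0}}\circ\phi_{X_{0}}$.

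Next, assuming as in the statement that $0$ is not a hyperbolic fixed point of $\phi_{Y_{0}}\circ\phi_{X_{0}}$, the previous step shows that $0$ is likewise not hyperbolic for $\phi_{X_{0}}\circ\phi_{Y_{0}}$. Theorem \ref{ssinv} then tells us that the pair $(\phi_{X_{0}},\phi_{Y_{0}})$ fails to be locally and simultaneously structurally stable at $0$ in $W^{r}$. Finally, I would invoke the contrapositive of Lemma \ref{rel}: that lemma asserts that local structural stability of $Z_{0}$ at $p$ forces simultaneous structural stability of $(\phi_{X_{0}},\phi_{Y_{0}})$, so the failure of the latter forces the failure of the former. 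Hence $Z_{0}$ is locally structurally unstable at $p$, as claimed.

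The argument is essentially a bookkeeping combination of two already-established results, so I expect no serious analytic obstacle. The only point requiring genuine care is the reconciliation of the composition orders via the conjugacy above, which guarantees that the nonhyperbolicity hypothesis is insensitive to whether one writes $\phi_{X_{0}}\circ\phi_{Y_{0}}$ or $\phi_{Y_{0}}\circ\phi_{X_{0}}$; without this remark one might worry that Theorem \ref{ssinv} and the stated hypothesis refer to genuinely different maps.
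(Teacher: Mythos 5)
Your proposal is correct and follows essentially the same route as the paper, which obtains Proposition \ref{rel2} precisely by combining Lemma \ref{rel} with Theorem \ref{ssinv}. Your conjugacy remark $\phi_{X_{0}}\circ(\phi_{Y_{0}}\circ\phi_{X_{0}})\circ\phi_{X_{0}}^{-1}=\phi_{X_{0}}\circ\phi_{Y_{0}}$, reconciling the two composition orders, is a valid bookkeeping detail that the paper leaves implicit.
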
	

A simple computation of eigenvalues and eigenvectors allows us to study the fixed point $p$ of the first return map $\phi$:

\begin{lemma}\label{posinv}
	Let $Z=(X,Y)\in\Or$ be a nonsmooth vector field having a T-singularity at $p$ such that $S_{X}\pitchfork S_{Y}$ at $p$. Let $(\ag,\bg,\cg)$ be the normal parameters of $Z$ at $p$.
	\begin{enumerate}
		\item  If $\ag\bg(\ag\bg-\cg)\leq 0$, then $0$ is not a hyperbolic fixed point of $\phi$. In addition, if $\ag\bg(\ag\bg-\cg)< 0$, then $\phi$ has complex eigenvalues.
		\item  If $\ag\bg(\ag\bg-\cg)> 0$, then $0$ is a saddle point of $\phi$. In addition, if $\la,\mu$ are the eigenvalues of $\phi$ such that $|\mu|<1<|\la|$, and $v_{\mu},v_{\la}$ are the correspondent eigenvectors, then:
		\begin{enumerate}
			\item  If $\ag>0$ and $\bg>0$, then $v_{\mu},v_{\la}\in \s^{s}$. 
			\item  If $\ag>0$ and $\bg<0$, then $v_{\mu}\in\s^{c}$ and $v_{\la}\in \s^{s}$.
			\item  If $\ag<0$ and $\bg>0$, then $v_{\mu}\in\s^{s},$ and $v_{\la}\in \s^{c}$.		
			\item  If $\ag<0$ and $\bg<0$ then $v_{\mu},v_{\la}\in \s^{c}$.							
		\end{enumerate}
	\end{enumerate}
\end{lemma}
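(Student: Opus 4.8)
The plan is to reduce the entire statement to the linear algebra of $D\phi(0)$, which by the chain rule equals $D\phi_X(0)\cdot D\phi_Y(0)$, both factors being read off from Lemma~\ref{forma-inv}. First I would record
\[
D\phi_X(0)=\left(\begin{array}{cc} 1 & -2\ag \\ 0 & -1 \end{array}\right),\qquad
D\phi_Y(0)=\left(\begin{array}{cc} -1 & 0 \\ -\frac{2\bg}{\cg} & 1 \end{array}\right),
\]
and multiply them to obtain
\[
D\phi(0)=\left(\begin{array}{cc} -1+\frac{4\ag\bg}{\cg} & -2\ag \\ \frac{2\bg}{\cg} & -1 \end{array}\right),
\]
which has trace $T=-2+\frac{4\ag\bg}{\cg}$ and determinant $1$ (as it must, being a product of two orientation-reversing involutions). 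Writing $s=\ag\bg/\cg$ and recalling that $\cg>0$ at a T-singularity (since $\sgn(\cg)=\sgn(Y^{2}f(p))$ and $Y^{2}f(p)>0$), a one-line computation gives the discriminant $T^{2}-4=16\,s(s-1)$; as $\cg^{2}>0$ we have $\sgn(s(s-1))=\sgn(\ag\bg(\ag\bg-\cg))$.

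Since $\det D\phi(0)=1$, the eigenvalues $\la,\mu$ satisfy $\la\mu=1$, so Part~(1) is immediate from the sign of the discriminant. When $\ag\bg(\ag\bg-\cg)<0$ the eigenvalues form a complex-conjugate pair, hence lie on the unit circle and $0$ is non-hyperbolic; when $\ag\bg(\ag\bg-\cg)=0$ the repeated real eigenvalue is $T/2\in\{-1,1\}$, again non-hyperbolic. When $\ag\bg(\ag\bg-\cg)>0$ the eigenvalues are real, distinct, and of product $1$, so one lies strictly inside and the other strictly outside the unit circle; this is exactly a saddle, which is the first claim of Part~(2).

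For the eigenvector positions I would first fix the regions in normal coordinates: since $Xf=\dg y=-y$ and $Yf=x+\text{h.o.t.}$ at a T-singularity, one reads off $\s^{s}=\{xy>0\}$ and $\s^{c}=\{xy<0\}$. Solving $(D\phi(0)-\nu I)v=0$ along the bottom row gives, for eigenvalue $\nu\neq-1$, the eigenvector $v=\bigl(1,\frac{2\bg}{\cg(1+\nu)}\bigr)$, so that $v\in\s^{s}$ iff $\frac{\bg}{1+\nu}>0$ and $v\in\s^{c}$ iff $\frac{\bg}{1+\nu}<0$ (using $\cg>0$); note $\nu=-1$ is excluded in the saddle case, as it would force $s=0$. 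The signs of $1+\la$ and $1+\mu$ are governed by the two identities $(1+\la)+(1+\mu)=(1+\la)(1+\mu)=2+T=4s$: if $\ag\bg>0$ then in the saddle case $s>1$, so both $1+\la,1+\mu$ are positive; if $\ag\bg<0$ then $s<0$, their product $4s$ is negative, and since $T<-2$ forces both eigenvalues negative, the expanding one satisfies $\la<-1$ (so $1+\la<0$) while the contracting one satisfies $\mu\in(-1,0)$ (so $1+\mu>0$). Combining these signs with $\sgn(\bg)$ reproduces the four cases (a)--(d) precisely.

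The main obstacle is the bookkeeping in this last step: one must pin down which real eigenvalue expands and the sign of $1+\nu$ for each. The clean device is the observation that $1+\la$ and $1+\mu$ share the same sum and product $4s=2+T$, together with the fact that whether $s<0$ or $s>1$ determines, via $T<-2$ or $T>2$, the common sign of the two eigenvalues. Everything else is routine matrix arithmetic, and since the invariant manifolds of $\phi$ are tangent to these eigenvectors at $p$, the same conclusions transfer to the local dynamics.
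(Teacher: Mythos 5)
Your proposal is correct and follows exactly the route the paper intends: the paper offers no written proof beyond the remark that the lemma follows from ``a simple computation of eigenvalues and eigenvectors'' of the first return map in normal coordinates, and your computation of $d\phi(0)=d\phi_{X}(0)\,d\phi_{Y}(0)$ from Lemma~\ref{forma-inv}, with $\det d\phi(0)=1$, $T=-2+4\ag\bg/\cg$, discriminant $16s(s-1)$, and the sign analysis of $\bg/(1+\nu)$ against $\s^{s}=\{xy>0\}$, is precisely that computation carried out in full. Your treatment is in fact more careful than the paper's (e.g., excluding $\nu=-1$ in the saddle case and using the sum-product identity $(1+\la)+(1+\mu)=(1+\la)(1+\mu)=4s$ to settle the four sign cases), and no gaps remain.
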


\begin{figure}[H]
	\centering
	\begin{overpic}[width=11cm]{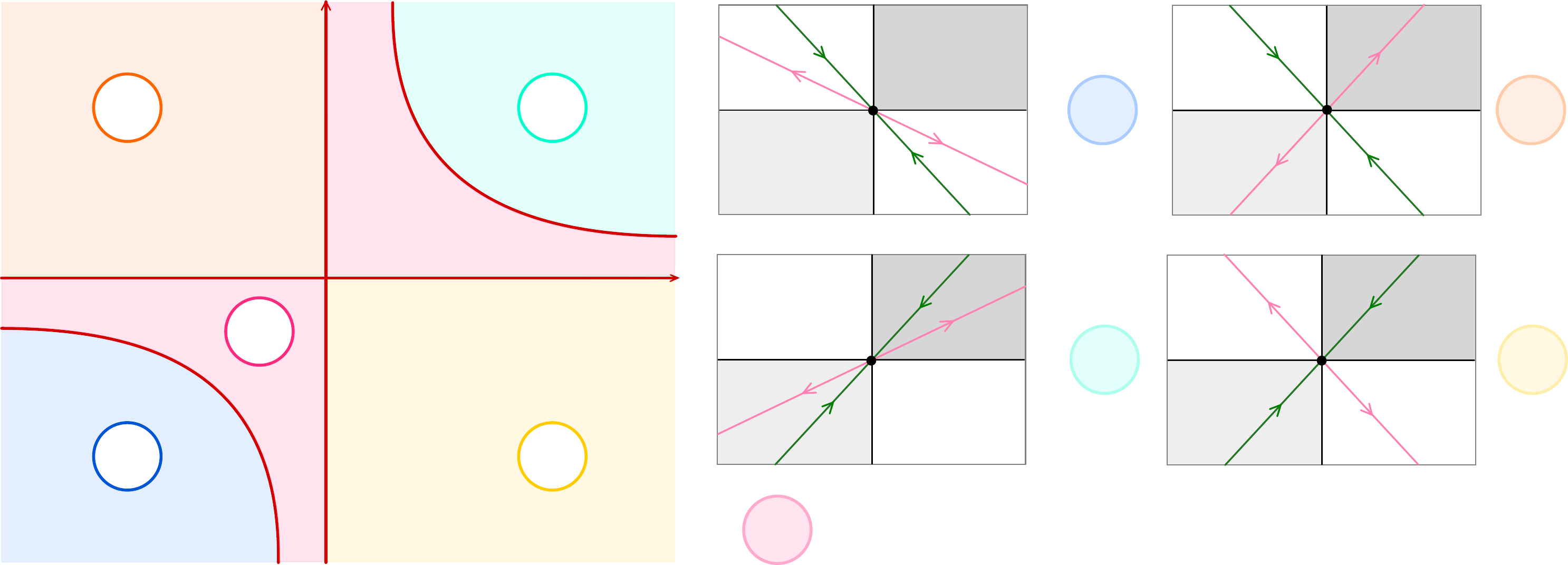}
		\put(35,6.2){{\footnotesize I}}	
		\put(7,6.2){{\footnotesize IV}}		
		\put(7.5,28.5){{\footnotesize II}}	
		\put(34,28.5){{\footnotesize III}}		
		\put(14.8,14){{\footnotesize NH}}		
		\put(43.6,18){{\footnotesize $\alpha$}}	
		\put(20,37){{\footnotesize $\beta$}}					
		\put(97.2,12.5){{\footnotesize I}}	
		\put(69,28,5){{\footnotesize IV}}		
		\put(96.5,28.5){{\footnotesize II}}	
		\put(69,12.5){{\footnotesize III}}		
		\put(47.8,1.5){{\footnotesize NH}}
		\put(52,1.5){{\footnotesize - Non-hyperbolic}}	
	\end{overpic}
	\caption{Regions of the $(\ag,\bg)$-parameter space with the corresponding behavior of the first return map $\p$, for a fixed value of $\cg>0$.}\label{TI-bifdiagram}
\end{figure}

\begin{proposition}\label{NH-unst}
	Let $Z_{0}=(X_{0},Y_{0})\in\Or$ be a germ of nonsmooth vector field having a T-singularity at $p$. Let $(\ag,\bg,\cg)$ be the normal parameters of $Z_{0}$ at $p$. If $\ag\bg(\ag\bg-\cg)\leq 0$, then $Z_{0}$ is locally structurally unstable at $p$.
\end{proposition}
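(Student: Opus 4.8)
The plan is to reduce the statement directly to the involution machinery already in place, namely Lemma \ref{posinv} and Proposition \ref{rel2}, so that essentially no new computation is needed. The key realization is that the hypothesis $\ag\bg(\ag\bg-\cg)\leq 0$ is precisely the non-hyperbolicity condition appearing in part $(1)$ of Lemma \ref{posinv}, and non-hyperbolicity of the first return map is exactly what Proposition \ref{rel2} needs to conclude instability.

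First I would recall from Lemma \ref{inv} that the first return map of $Z_{0}$ at the T-singularity $p$ factors as $\phi=\phi_{X_{0}}\circ\phi_{Y_{0}}$, where $\phi_{X_{0}}$ and $\phi_{Y_{0}}$ are the involutions associated to the two folds. Since both factors are involutions, one has $\phi_{Y_{0}}\circ\phi_{X_{0}}=(\phi_{X_{0}}\circ\phi_{Y_{0}})^{-1}=\phi^{-1}$. Consequently the linearizations at the origin satisfy $D(\phi_{Y_{0}}\circ\phi_{X_{0}})(0)=(D\phi(0))^{-1}$, so their eigenvalues are reciprocals of one another. Because the hyperbolicity condition ``no eigenvalue lies on the unit circle'' is invariant under passing to reciprocals, the origin is a hyperbolic fixed point of $\phi$ if and only if it is a hyperbolic fixed point of $\phi_{Y_{0}}\circ\phi_{X_{0}}$.

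Next I would invoke part $(1)$ of Lemma \ref{posinv}: under the standing hypothesis $\ag\bg(\ag\bg-\cg)\leq 0$, the origin fails to be a hyperbolic fixed point of $\phi$. By the equivalence established in the previous step, the origin is then also not a hyperbolic fixed point of $\phi_{Y_{0}}\circ\phi_{X_{0}}$. Finally, Proposition \ref{rel2} applies verbatim: since $0$ is not a hyperbolic fixed point of $\phi_{Y_{0}}\circ\phi_{X_{0}}$, the germ $Z_{0}$ is locally structurally unstable at $p$, which is exactly the assertion.

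I do not expect a genuine obstacle here, since all the analytic content has been discharged by the earlier results. The single point requiring care is the bookkeeping matching the two orders of composition $\phi_{X_{0}}\circ\phi_{Y_{0}}$ and $\phi_{Y_{0}}\circ\phi_{X_{0}}$ — Lemma \ref{posinv} is phrased in terms of the former while Proposition \ref{rel2} is phrased in terms of the latter — which is resolved once one observes, via Lemma \ref{comut} and the involution property, that the two maps are mutually inverse and hence share the same hyperbolicity type.
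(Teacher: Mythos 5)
Your proposal is correct and takes essentially the same route as the paper: the paper's own proof opens by deducing instability directly from Proposition \ref{rel2} combined with the non-hyperbolicity of the first return map supplied by Lemma \ref{posinv}, exactly as you do, and your reciprocal-eigenvalue observation (that $\phi_{Y_{0}}\circ\phi_{X_{0}}=(\phi_{X_{0}}\circ\phi_{Y_{0}})^{-1}$, so hyperbolicity is shared) correctly settles the composition-order bookkeeping that the paper passes over silently. The paper then adds a supplementary explicit argument (a blow-up of the fixed point showing that rational versus irrational rotation yields non-equivalent behavior of the curves $\phi^{n}(S_{X})$), but that serves to exhibit the moduli concretely and is not needed for the bare statement you were asked to prove.
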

\begin{proof}
	It follows directly from Proposition \ref{rel2} and the fact that $p$ is not a hyperbolic fixed point of the first return map $\phi_{0}=\phi_{X_{0}}\circ\phi_{Y_{0}}$ associated to $Z_{0}$.	In the sequel we present an explicitly argument for the local structural instability of $Z_{0}$. It is mainly based on \cite{BT} and the Blow-up procedure (see \cite{A}).
	
	Let $\phi_{0}:(\s,p)\rightarrow(\s,p)$ be the (germ of) first return map associated to $Z_{0}$ at $p$. From the conditions assumed in the Theorem, it follows that $\phi_{0}$ has eigenvalues $\la_{\pm}=a\pm i b$, where $a^{2}+b^{2}=1$. Using the normal form of $Z_{0}$ and basic linear algebra, it is easy to find coordinates $(x,y)$ of $\s$ at $p$, such that:
	$$\phi_{0}(x,y)=(ax-by,bx+ay)+\er(|(x,y)|^{2}).$$
	
	Consider the germs of functions $h_{1},h_{2}:(\R^{2},0)\rightarrow (\R^{2},0)$, given by:
	$$h_{1}(x,y)=(x,y) \textrm{ and } h_{2}(x,y)=\sqrt{x^{2}+y^{2}}(x,y).$$
	
	Notice that $h_{1},h_{2}$ are germs of homeomorphisms if we exclude the origin in their domains.
	
	If $(x,y)\neq (0,0)$, a straightforward computation shows that:
	
	$$\psi_{0}(x,y)= h_{2}^{-1}\circ \phi_{0}\circ h_{1}(x,y)= \frac{1}{\sqrt{x^{2}+y^{2}}}\phi_{0}(x,y).$$
	
	Therefore, $\phi_{0}$ and $\psi_{0}$ are topologically equivalent. Identifying $(x,y)=x+iy$ and writing $\psi_{0}$ in polar coordinates, we obtain:
	
	$$\psi_{0}(re^{i\theta})=e^{i(\theta +\tau)}+r\Gamma(r,\theta),$$
	where $a+ib=e^{i\tau}$ and $\Gamma$ is a bounded function.
	
	Hence, $\psi_{0}$ is the \textbf{blow-up} of $\phi_{0}$ at the origin. If $r\rightarrow 0$, $\psi_{0}$ induces a diffeomorphism $\overline{\psi_{0}}:S^{1}\rightarrow S^{1}$, given by:
	$$\overline{\psi_{0}}(\theta)=\theta +\tau,$$
	and the singularity of $\phi_{0}$ is brought to the circle $S^{1}$ with the dynamics induced by $\overline{\psi_{0}}$.
	
	\begin{figure}[H]
		\centering
		\bigskip
		\begin{overpic}[width=6cm]{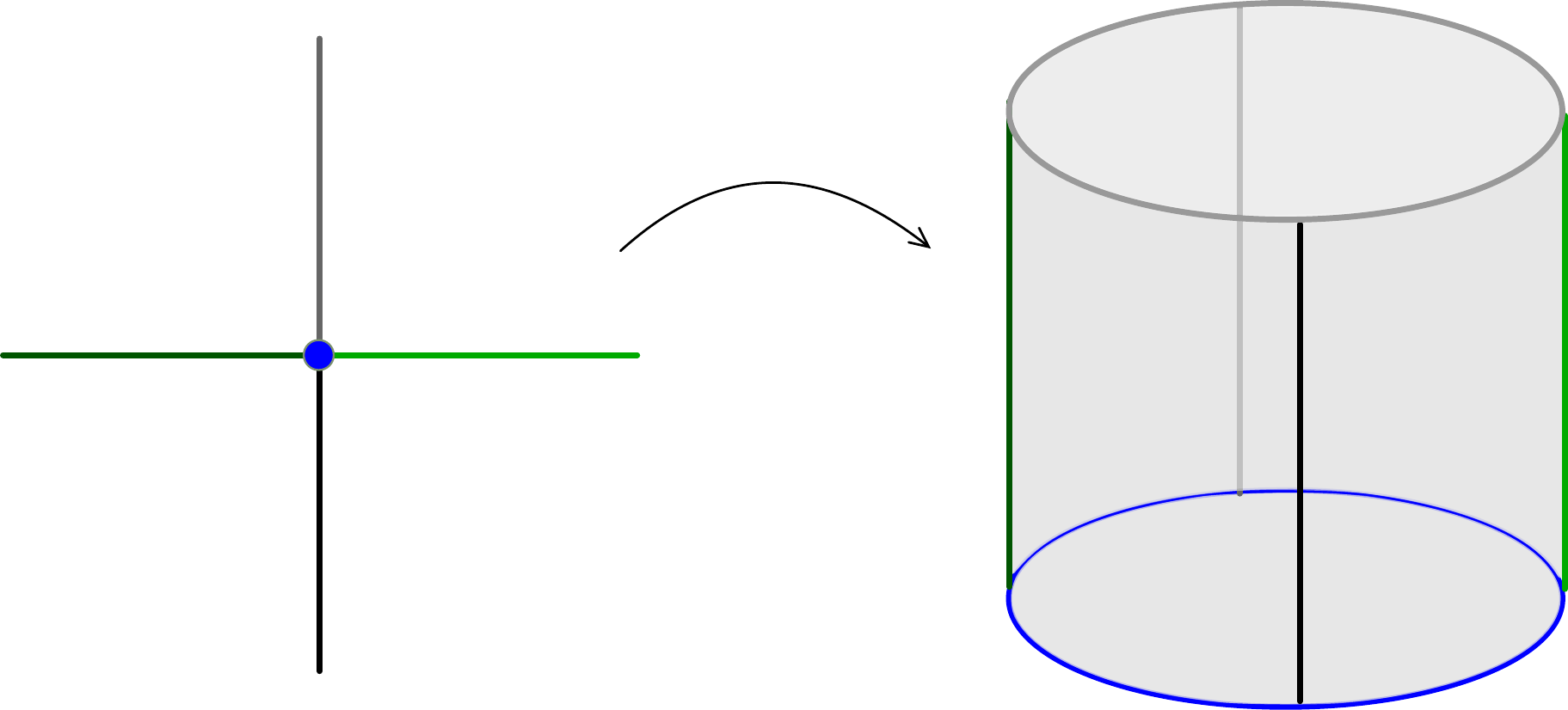}
		\end{overpic}
		\bigskip
		\caption{Blow-up of $p$ into $S^{1}$.}	
	\end{figure} 
	
	Let $Z$ be a small perturbation of $Z_{0}$, take it small enough such that the normal parameters $(\tilde{\ag},\tilde{\bg},\tilde{\cg})$ of $Z$ are close enough to $(\ag,\bg,\cg)$.
	
	If $\phi$ is the first return map associated to $Z$ at the fold-fold point $q(Z)\approx p$, then it has eigenvalues $\tilde{\lambda}_{\pm}=\tilde{a}\pm i\tilde{b}$. 
	
	Applying the same process to $\phi$, we can blow-up its singularity $q(Z)$ into $S^{1}$, and the dynamics in $S^{1}$ is induced by $\overline{\psi}:S^{1}\rightarrow S^{1}$, given by $\overline{\psi}(\theta)=\theta +\tilde{\tau},$	where $\tilde{a}+i\tilde{b}=e^{i\tilde{\tau}}$.

	Now, if $h: V(p)\rightarrow V(q(Z))$ is an equivalence between $Z_{0}$ and $Z$, then $h(S_{X_{0}})=S_{X}$. In adequate coordinates, it means that $h(x,0)=(f(x),0)$, where $f$ is a homeomorphism of the real line such that $f(0)=0$. 
	
	Notice that the motion of $S_{X_{0}}\cap\{x\geq0\}$ (resp. $S_{X}\cap\{x\geq0\}$) around the origin through $\phi_{0}$ (resp. $\phi$) is given by the orbit $\gamma_{0}=\{\overline{\psi_{0}}^{n}(0), n\in\Z\}$ (resp. $\gamma=\{\overline{\psi}^{n}(0), n\in\Z\}$).
	
	Since $h$ is an equivalence, it follows that the orbits $\gamma_{0}$ and $\gamma$ have the same topology. Nevertheless, if $\tau\in \Q$ (resp. $\tau\notin \Q$) we can take $Z$ (sufficiently near of $Z_{0}$) such that $\tilde{\tau}\notin \Q$ (resp. $\tilde{\tau}\in \Q$). Therefore, $\gamma_{0}$ is a periodic orbit and $\gamma$ is dense in $S^{1}$ (resp. $\gamma_{0}$ is dense in $S^{1}$ and $\gamma$ is a periodic orbit).
	
	It means that, when $\tau\in \Q$ (and $\gamma_{0}$ is periodic), the curves $\phi^{n}(S_{X})$ are tangent to a finite number of directions at $p$, i.e., there exists $m$ vectors $v_{1},\cdots, v_{m}$ in $T_{p}\s$ such that $ T_{p}\phi^{n}(S_{X})=\textrm{span}\{v_{i(n)}\}$, for some $i(n)\in\{1,\cdots, m\}$, for each $n\in \N$. Hence, we conclude that $\bigcup \phi^{n}(S_{X})$ has zero measure in $\s$.
	
	On the other hand, if $\tau\notin \Q$ (and $\gamma_{0}$ is dense), we have that for each $v\in T_{p}\s$, there exists a sequence $\phi^{n_{k}}(S_{X})$, such that $T_{p}\phi^{n_{k}}(S_{X})=\textrm{span}\{v_{k}\}$, and $v_{k}\rightarrow v$ when $k\rightarrow\infty$. We conclude that $\bigcup \phi^{n}(S_{X})$ has full measure in $\s$.

	From these facts, we can see that the orbits $\phi_{0}^{n}(S_{X_{0}})$ and $\phi^{n}(S_{X})$ do not have the same topology.
	
	\begin{figure}[H]
		\centering
		\bigskip
		\begin{overpic}[width=10cm]{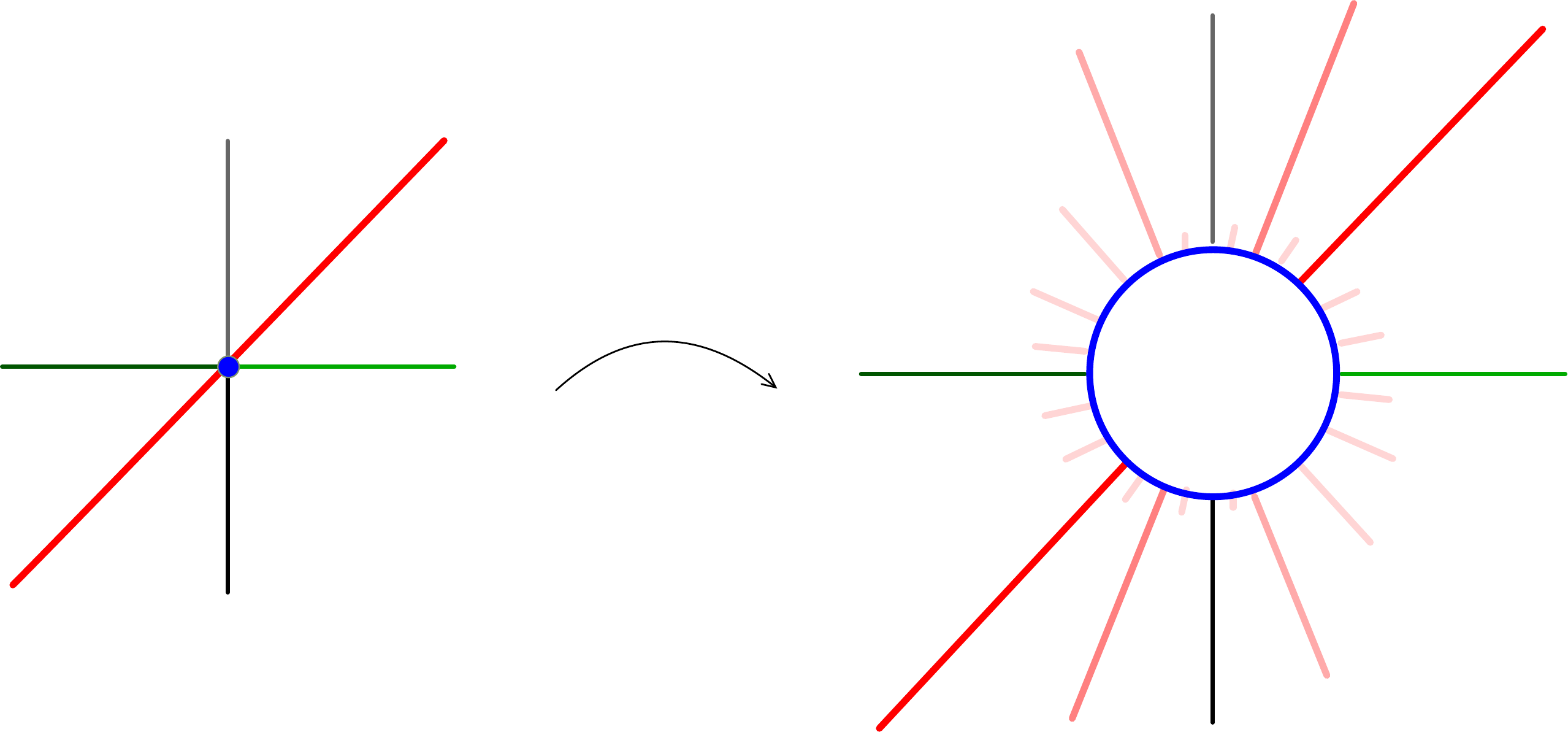}
			
			\put(96,40){{\tiny $S_{X}$}}
			\put(86,43){{\tiny $\phi(S_{X})$}}	
			\put(69,43){{\tiny $\phi^{2}(S_{X})$}}
			\put(87,27){{\tiny $\phi^{n}(S_{X})$}}											
			\put(28,34){{\footnotesize $S_{X}$}}
			\put(15,21){{\scriptsize $p$}}	
			\put(81,26){{\scriptsize $\theta_{0}$}}				
			\put(36,26){{\tiny Blow-up of $p$}}																					
		\end{overpic}
		\bigskip
		\caption{Behavior of $S_{X}$ when $\theta\notin\mathbb{Q}$.}	
	\end{figure} 
	
	Now, a $\s$-equivalence between $Z_{0}$ and $Z$ has to satisfy $h(S_{X_{0}})=S_{X}$ and $h\circ \phi_{0}=\phi\circ h$. Since $\phi_{0}^{n}(S_{X_{0}})$ and $\phi^{n}(S_{X})$ have different topological type, it follows that there is no $\s$-equivalence between $Z_{0}$
	and $Z$.
	
	We conclude that, in any neighborhood of $Z_{0}$ in $\Or$ we can find a nonsmooth vector field $Z$ such that $Z_{0}$ is not topologically equivalent to $Z$ at $p$. Therefore, $Z_{0}$ is locally structurally unstable at $p$.
\end{proof}

\begin{remark}
	Let $\tau_{Z}$ be the argument of the eigenvalues $a\pm ib $ of the first return map $\phi$ associated to $Z$.
	
	If $Z_{0}$ is a nonsmooth vector field satisfying the hypotheses of Proposition \ref{NH-unst}, then a neighborhood $\V_{0}$ of $Z_{0}$ in $\Or$ is foliated by codimension one submanifolds of $\Or$ corresponding to the value of $\tau_{Z}$, i.e., $Z_{1}\in \V_{0}$ and $Z_{2}\in \V_{0}$ lies on the same leaf if and only if $\tau_{Z_{1}}=\tau_{Z_{2}}$.
	
	The topological type of the first return map is locally constant along each leaf. Moreover, if $Z_{1}$ and $Z_{2}$ are elements of $\V_{0}$ lying on different leaves of the foliation then they are not topologically equivalent.
	
	We conclude that $Z_{0}$ has $\infty$ moduli of stability. (See \cite{BT,M,P} for more details.)
\end{remark}

Now we can prove Theorem D.

\begin{theorem}
	$\s_{0}$ is not residual in $\Or$.
\end{theorem}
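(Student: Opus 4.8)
The plan is to produce a nonempty open subset of $\Or$ consisting entirely of structurally unstable germs, and then to conclude by Baire category. Since $\Or=\Xr\times\Xr$ is a Baire space, every residual subset is dense; hence it suffices to exhibit a nonempty open set $\V_{0}\subset\Or$ with $\V_{0}\cap\s_{0}=\emptyset$, for then $\s_{0}$ fails to be dense and cannot be residual.

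First I would fix a germ $Z_{0}\in\Or$ with a T-singularity at $p$ whose normal parameters satisfy the strict inequality $\ag\bg(\ag\bg-\cg)<0$; by Lemma \ref{posinv} the associated first return map then has complex, unit-modulus eigenvalues. I would then verify that this whole configuration persists under small perturbations. The existence of a unique nearby T-singularity $q(Z)$ for $Z$ close to $Z_{0}$ is an open condition --- this is precisely the observation opening the proof of Lemma \ref{rel} --- and the normal parameters $(\tilde{\ag},\tilde{\bg},\tilde{\cg})$ of $Z$ at $q(Z)$ vary continuously with $Z$. As $\ag\bg(\ag\bg-\cg)<0$ is an open condition on the parameters, there is a neighborhood $\V_{0}$ of $Z_{0}$ in $\Or$ such that every $Z\in\V_{0}$ carries a T-singularity whose normal parameters satisfy $\tilde{\ag}\tilde{\bg}(\tilde{\ag}\tilde{\bg}-\tilde{\cg})<0$.

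Applying Proposition \ref{NH-unst} to each $Z\in\V_{0}$ --- whose hypothesis $\tilde{\ag}\tilde{\bg}(\tilde{\ag}\tilde{\bg}-\tilde{\cg})\leq 0$ is satisfied --- I conclude that every such $Z$ is locally structurally unstable at its T-singularity, hence $Z\notin\s_{0}$. Thus $\V_{0}\cap\s_{0}=\emptyset$, so $\s_{0}$ is not dense in $\Or$, and therefore not residual. (Equivalently, if $\s_{0}$ were residual its complement would be meager, whereas the nonempty open set $\V_{0}$ contained in that complement cannot be meager in a Baire space.)

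The only delicate point is this persistence step: one must confirm that the complete set of defining conditions for an elliptic fold-fold point with complex return-map eigenvalues (the fold-fold tangency data together with $S_{X}\pitchfork S_{Y}$ and the strict sign of the discriminant) is stable under $\Cr$ perturbations, so that instability occupies a genuine open set rather than a thin exceptional stratum. Granting this, the result is immediate from Proposition \ref{NH-unst} and the Baire property of $\Or$.
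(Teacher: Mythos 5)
Your proposal is correct and takes essentially the same route as the paper: both fix a germ $Z_{0}$ with a T-singularity whose normal parameters satisfy the strict open condition $\ag\bg(\ag\bg-\cg)<0$, use the Implicit Function Theorem and the continuous dependence of the normal parameters to propagate this condition to a whole neighborhood $\V$ of $Z_{0}$ in $\Or$, and then invoke Proposition \ref{NH-unst} to conclude $\V\subset\Or\setminus\s_{0}$. Your only addition is spelling out the Baire-category step (residual implies dense, so a nonempty open set of unstable germs suffices), which the paper leaves implicit in its final sentence.
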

\begin{proof}[Proof of Theorem D]
	It follows directly from Theorem \ref{NH-unst}. In fact, let $Z_{0}\in \Or$ and let  $(\ag_{0},\bg_{0},\cg_{0})$ be the normal parameters of $Z_{0}$ at $p$, they satisfy $\ag_{0}\bg_{0}(\ag_{0}\bg_{0}-\cg_{0})<0.$
	
	From continuity (and Implicit Function Theorem), there exist neighborhoods $\V$ of $Z_{0}$ in $\Or$ and $V$ of $p$ in $M$ such that, each $Z$ has a T-singularity at $q(Z)\in V$. 
	
	Moreover, if we apply Proposition \ref{FFNF_prop} to $Z$ at $q(Z)$, the normal parameters $(\ag,\bg,\cg)$ of  $Z$ at $q(Z)$ also satisfy $\ag\bg(\ag\bg-\cg)< 0.$
	
	
	From Theorem \ref{NH-unst}, each $Z\in \V$ is locally structurally unstable at the fold-fold singularity $q(Z)\in V\cap \s$. It means that each $Z\in\V$ is locally structurally unstable at a point $q(Z)\in\s$, hence each $Z\in\V$ is $\s$-locally structurally unstable. 
	Hence, $\V\subset \Or\setminus \s_{0}$ and $\s_{0}$ is not residual in $\Or$.		
\end{proof}

Notice that the results obtained until this point are mainly concerned with the foliation $\mathcal{F}$ generated by a nonsmooth vector field near a T-singularity. The sliding dynamics does not have influence on these results. Nevertheless, the existence of sliding vector fields will be extremely important in the classification of the structural stability of a T-singularity having a first return map with hyperbolic fixed point.

\begin{proposition}\label{webproof}
	Let $Z_{0}=(X_{0},Y_{0})\in\Or$ be a germ of nonsmooth vector field having a T-singularity at $p$. Let $(\ag,\bg,\cg)$ be the normal parameters of $Z_{0}$ at $p$. If either $\ag\bg\geq\cg$ and $\ag,\bg>0$  or $\ag\bg<0$, then $Z_{0}$ is locally structurally unstable at $p$.	
\end{proposition}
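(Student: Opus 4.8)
The plan is to reduce the statement to the case in which the first return map $\phi=\phi_{X_0}\circ\phi_{Y_0}$ has a hyperbolic saddle at $p$ but its invariant manifolds are positioned so that condition $\s(E)$ fails, and then to extract a modulus of stability from the coupling between the crossing dynamics (the return map) and the sliding dynamics. First I would dispose of the boundary situation: since $p$ is a T-singularity we have $\cg>0$, so in case A the sub-case $\ag\bg=\cg$ gives $\ag\bg(\ag\bg-\cg)=0$, which is already covered by Proposition \ref{NH-unst}. In every remaining situation -- namely $\ag\bg>\cg$ with $\ag,\bg>0$ (case A), and $\ag\bg<0$ (case B) -- one checks directly that $\ag\bg(\ag\bg-\cg)>0$, so by Lemma \ref{posinv}(2) the origin is a saddle of $\phi$ with real eigenvalues $|\mu|<1<|\la|$. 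We may therefore assume throughout that $\phi$ is a hyperbolic saddle.

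Next I would locate the invariant manifolds via Lemma \ref{posinv}(2): when $\ag,\bg>0$ (case A) we are in (2a), so both $v_\mu,v_\la\in\s^s$; when $\ag\bg<0$ (case B) we are in (2b) or (2c), so exactly one of $v_\mu,v_\la$ lies in $\s^c$ and the other in $\s^s$. In all of these at least one local invariant manifold $W^{s}_{loc}$ or $W^{u}_{loc}$ of the saddle is contained in $\s^s$; in particular $Z_0$ violates $\s(E)$. The crucial observation is that this instability is invisible to the pair of involutions: since $\phi$ is hyperbolic, Theorem \ref{ssinv} guarantees that $(\phi_{X_0},\phi_{Y_0})$ is itself structurally stable, so -- unlike in Proposition \ref{NH-unst}, where one invokes Proposition \ref{rel2} -- the obstruction cannot be produced by the return map as an abstract diffeomorphism. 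It must live in the interaction of the crossing foliation with the sliding vector field $F_{Z_0}$, whose singularity at $p$ (Proposition \ref{SVF_FF}) is active precisely on $\s^s$, where the stray separatrix lies.

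To make the obstruction explicit I would blow up $p$ to a circle $S^1$, as in the proof of Proposition \ref{NH-unst}, and read off the resulting one-dimensional dynamics. On the arcs of $S^1$ corresponding to $\s^c$ the blow-up of $\phi$ acts, while on the arcs corresponding to $\s^s$ the blow-up of $F_{Z_0}$ acts; the two pieces are glued along the directions of the fold lines $S_{X}$ and $S_{Y}$, which are exactly where orbits pass between crossing and sliding. When both separatrices lie in $\s^c$ (the excluded stable case $\ag,\bg<0$, $\ag\bg>\cg$) this glued circle system is Morse--Smale and rigid; but as soon as a separatrix direction falls on an $\s^s$-arc, the return exponent $\log|\la|/\log|\mu|$ together with the relative position of the saddle separatrix and the sliding separatrices of $F_{Z_0}$ enter as continuous invariants of the combined system. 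I would then perturb the normal parameters $(\ag,\bg,\cg)$ so as to move these invariants continuously while preserving both the saddle and the combinatorial partition of $S^1$, and argue -- in the spirit of Lemma \ref{rel} and Proposition \ref{NH-unst} -- that any $\s$-equivalence must carry $S_{X}$, the sliding field, and the separatrix in $\s^s$ to their counterparts, hence must preserve these invariants. Since they vary under arbitrarily small perturbations, $Z_0$ carries nontrivial (indeed infinitely many) moduli and is structurally unstable.

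The main obstacle is exactly this coupling step. Both ingredients are individually structurally stable -- the return map is a hyperbolic saddle and the sliding vector field is a generic planar singularity -- so the entire obstruction is concentrated in how they are matched along the fold lines once a separatrix enters $\s^s$. The delicate point is to isolate a genuine topological invariant of the glued crossing--sliding circle dynamics and to verify both that it is preserved by every $\s$-equivalence and that it is non-constant along small deformations of $(\ag,\bg,\cg)$; establishing this non-constancy while keeping the saddle hyperbolic and the $\s^c/\s^s$ partition combinatorially fixed is where the real work lies.
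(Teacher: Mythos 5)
Your reduction is correct and matches the paper's starting point: in both cases $\ag\bg(\ag\bg-\cg)>0$ (the boundary $\ag\bg=\cg$ being absorbed by Proposition \ref{NH-unst}), so by Lemma \ref{posinv} the return map $\phi_{0}$ has a hyperbolic saddle with at least one local invariant manifold in $\s^{s}$, and you rightly observe that Theorem \ref{ssinv} then blocks any argument through the involution pair alone, forcing the obstruction into the interaction between the crossing and sliding dynamics. But the mechanism you propose for extracting a modulus fails at its one load-bearing point. Your candidate invariant $\log|\la|/\log|\mu|$ is \emph{frozen} in this class of systems: $\phi=\phi_{X}\circ\phi_{Y}$ is a composition of two involutions, each with linearization of determinant $-1$, so $\det d\phi(p)=1$, hence $\la\mu=\pm 1$ and the deMelo--Palis invariant is identically $-1$ for $Z_{0}$ and for every admissible perturbation (the paper makes exactly this observation in the remark following Proposition \ref{sela-prop}, and indeed the \emph{stable} case relies on $P(\phi_{0})=P(\phi)=-1$ to build the conjugacy). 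So the "return exponent" cannot vary and yields no instability; and the remaining invariants you invoke (relative positions of separatrices on the blown-up circle) are directions, which a homeomorphism can freely adjust. You concede that isolating a genuine invariant of the glued crossing--sliding dynamics is "where the real work lies" --- that is precisely the step that constitutes the proof, and it is missing.

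The paper's actual device is different and worth contrasting with your blow-up gluing. Taking $W^{s}\subset\s^{s}$ and passing to $\phi_{0}^{2}$ (positive eigenvalues), one observes that on the overlap regions $R_{n}=\phi_{0}^{2n}(V)\cap\cdots\cap\phi_{0}^{2}(V)\cap V$ near $W^{s}$ one has not one but $n+1$ vector fields: the sliding field $F_{0}$ together with its transports $F_{i}=(\phi_{0}^{2i})^{*}(F_{0})$. A determinant computation in normal coordinates shows these are pairwise transverse off a measure-zero set of parameters $(\ag,\bg,\cg)$, so for $n=2$ the three foliations $(\mathcal{F}_{0},\mathcal{F}_{1},\mathcal{F}_{2})$ form a $3$-web in general position in $R_{2}$. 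Three-webs on a two-dimensional region carry functional moduli (Blaschke--Bol web geometry), so arbitrarily small perturbations of $Z_{0}$ produce webs not homeomorphic to the original one; since any $\s$-equivalence must carry sliding orbits to sliding orbits and conjugate the return maps, it must carry each $\mathcal{F}_{i}$ to its counterpart preserving leaves, and instability follows. In short: the modulus does not sit in the saddle exponents (those are rigidly $-1$) nor in separatrix directions, but in the web generated by iterating the return map on the sliding foliation --- the one object your proposal never constructs.
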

\begin{proof}
	%
	%
	%
	
	In the conditions of the theorem, we can use Lemma \ref{posinv} to conclude that the first return map $\phi_{0}$ of $Z_{0}$ has a local invariant manifold of the saddle contained in $\s^{s}$.
	
	Without loss of generality, assume that $W^{s}\subset \s^{s}$. Notice that the map $\phi_{0}^{2}$ has the same invariant manifolds of $\phi_{0}$, but it has both positive eigenvalues $0<\la<1<\mu$.
	
	Generically, we have that the sliding vector field $F_{0}$ is transverse to $W^{s}\cap \s^{ss}$ for a small neighborhood of $p$. Let $V=U\cap \s^{s}$, where $U$ is a neighborhood of $p$ such that $F_{0}$ is transverse to $W^{s}\cap V$.
	
	\begin{figure}[H]
		\centering
		\begin{overpic}[width=5cm]{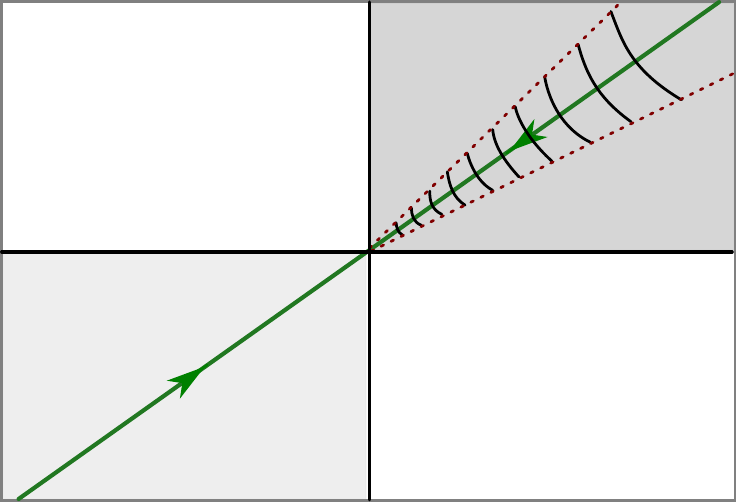}
			\put(51,30){{\small $p$}}
			\put(49,70){{\footnotesize $S_{Y}$}}		
			\put(101,34){{\footnotesize $S_{X}$}}			
			\put(98,70){{\footnotesize $W^{s}$}}										
		\end{overpic}
		\caption{Vector field $F_{0}$ near $W^{s}$.}
	\end{figure}
	
	Since $\la>0$, we have that  $\phi_{0}^{2}(W^{s})\subset \phi_{0}^{2}(V)\cap V.$ Moreover, $$\phi_{0}^{2n}(W^{s})\subset \phi_{0}^{2n}(V)\cap \phi_{0}^{2(n-1)}(V)\cap \cdots\cap \phi_{0}^{2}(V)\cap V,$$
	for each $n\in \N$.
	
	Let $R_{n}$ be the open set 
	$\phi_{0}^{2n}(V)\cap \phi_{0}^{2(n-1)}(V)\cap \cdots\cap \phi_{0}^{2}(V)\cap V$. Notice that, in each region $\phi_{0}^{2i}(V)$, we have a (push-forwarded) vector field $$F_{i}=(\phi_{0}^{2i})^{*}(F_{0}),$$ defined on it. Therefore, there are $n+1$ vector fields defined on $R_{n}$. Moreover, we can reduce $R_{n}$ such that $F_{i}$ and $F_{j}$ are transversal at each point of $R_{n}$, for $i\neq j$, generically. In fact, consider the expressions of $\phi_{X}$, $\phi_{Y}$ and $F_{Z}^{N}$ in the normal coordinates. Consider the curves $\gamma_{\pm}(t)=tv_{\pm}$, where $v_{\pm}$ are the eigenvectors associated to the eigenvalues $\la_{\pm}$ of $d\phi_{0}^{2}$. A simple computation shows that:
	$$F_{ij}^{\pm}(t)=\det(F_{i}(\gamma_{\pm}(t)),F_{j}(\gamma_{\pm}(t)))= A_{ij}^{\pm}(\ag,\bg,\cg)t^{2}+\er(t^{3}),$$
	where $A_{ij}^{\pm}$ is a rational function depending on $\ag,\bg$ and $\cg$.
	
	Clearly, if $A_{ij}^{\pm}\neq 0$, then $F_{i}$ and $F_{j}$ are transversal in a neighborhood of $\gamma_{\pm}$. In particular, they are transversal in a neighborhood of $W^{s}$.
	
	Since $A_{ij}^{\pm}=0$, for each $i,j=0,1,2$, defines a zero measure set in the parameter space $(\ag,\bg,\cg)$, we achieved our goal.
	
	Notice that, each vector field $F_{i}$ in $R_{n}$ defines a codimension one foliation $\mathcal{F}_{i}$ of $R_{n}$ ($R_{n}$ is foliated by the integral curves of the vector field $F_{i}$). Moreover, $(\mathcal{F}_{0},\cdots, \mathcal{F}_{n})$ is in general position (by the reduction of $R_{n}$). In particular, for $n=2$, we obtain $3$ foliations $(\mathcal{F}_{0},\mathcal{F}_{1},\mathcal{F}_{2})$ of $R_{2}$. This is called a \textbf{$3$-web} in $R_{2}$ (see \cite{B} and \cite{WEB}).
	
	\begin{figure}[H]
		\centering
		\begin{overpic}[width=5cm]{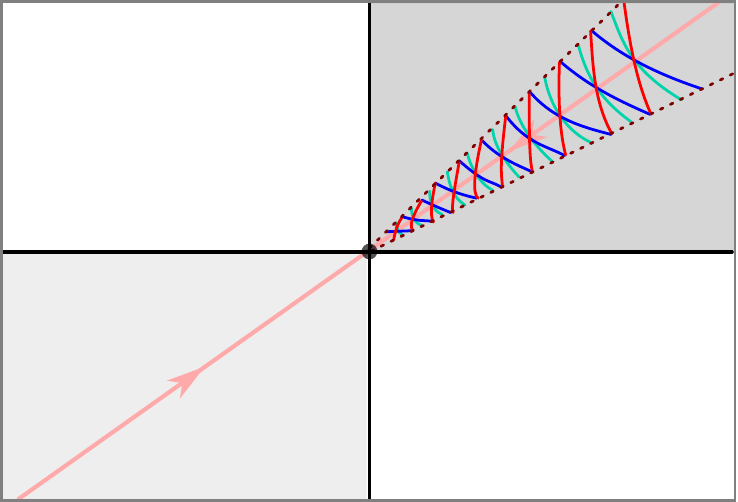}
			\put(51,30){{\small $p$}}
			\put(49,70){{\footnotesize $S_{Y}$}}		
			\put(101,34){{\footnotesize $S_{X}$}}			
			\put(98,70){{\footnotesize $W^{s}$}}										
		\end{overpic}
		\caption{Foliations $\mathcal{F}_{0},\mathcal{F}_{1}$, and $\mathcal{F}_{2}$ originated from the vector fields $F_{0}$, $F_{1}$ and $F_{2}$, respectively, near $W^{s}$.}
	\end{figure}
	
	Since $R_{2}$ is  a $2$-dimensional manifold, it follows that these foliations are structurally unstable in the following sense. If $(\widetilde{\mathcal{F}_{0}},\widetilde{\mathcal{F}_{1}},\widetilde{\mathcal{F}_{2}})$ are the foliations correspondent to a nonsmooth vector field $\widetilde{Z}\approx Z_{0}$, then there exists at least one $\widetilde{Z}$ such that there is no homeomorphism $h:R_{2}\rightarrow \widetilde{R_{2}}$ satisfying $h(\mathcal{F}_{i})=\widetilde{\mathcal{F}_{i}}$, for every $i=0,1,2$, preserving the leaves of each foliation.
	
	Clearly the property above has to be preserved by a $\s$-equivalence, hence there exists a $Z$ sufficiently near of $Z_{0}$ which is topologically different from $Z_{0}$ near $p$.
	
	The instability of $Z_{0}$ at $p$ follows directly from these facts.
\end{proof}

\begin{remark}
	In general, the Theory of Webs used in the last Theorem is developed for foliations on $\mathbb{C}^{n}$. Nevertheless, we can identify $\s$ with $\mathbb{C}$ at $p$ (since $\s$ is $2$-dimensional) and apply the results of this theory for this case.
\end{remark}

%
%
%
%
%

Now, let $Z_{0}=(X_{0},Y_{0})\in\Or$ be a germ of nonsmooth vector field having a Teixeira singularity at $p$. Let $(\ag,\bg,\cg)$ be the normal parameters of $Z_{0}$ at $p$ and assume that $\ag\bg\geq\cg$ and $\ag,\bg<0$.

Let $Z\in \Or$ be any small perturbation of $Z_{0}$ and denote their first return maps by $\phi$ and $\phi_{0}$, respectively. Our goal is to construct a topological equivalence between $Z$ and $Z_{0}$.

Using the Implicit Function Theorem and the continuous dependence between $Z_{0}$ and its normal parameters, we can deduce the following result:

\begin{lemma} \label{lemadasela}
	There exists a neighborhood $\V$ of $Z_{0}$ such that, for each $Z\in\V$, $F_{Z}^{N}$ and  $F_{Z_{0}}^{N}$ have the same topological type and the first return map $\phi$ of $Z$ has a saddle at the origin with both local invariant manifolds in $\s^{c}$.
\end{lemma}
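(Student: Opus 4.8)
The plan is to reduce all three assertions to the openness of a short list of strict inequalities on the normal parameters, together with their continuous dependence on $Z$. Indeed, the topological type of $F_Z^{N}$, the saddle structure of $\phi$, and the location of the local invariant manifolds in $\s^{c}$ are all encoded by the conditions $\tilde{\alpha}<0$, $\tilde{\beta}<0$, $\tilde{\gamma}>0$ and $\tilde{\alpha}\tilde{\beta}>\tilde{\gamma}$ on the normal parameters $(\tilde{\alpha},\tilde{\beta},\tilde{\gamma})$ of $Z$ at its singularity. Here $\tilde{\gamma}>0$ is automatic, since $Z$ remains of invisible fold-fold type; and under the stated hypotheses $(\alpha,\beta,\gamma)$ satisfies these inequalities, where $\alpha\beta\geq\gamma$ is read in its hyperbolic form $\alpha\beta>\gamma$ (the boundary $\alpha\beta=\gamma$ being excluded, as it produces a non-hyperbolic fixed point, hence instability, by Proposition \ref{NH-unst}).

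First I would set up the continuous dependence, exactly as in the proof of Theorem D. The transversality $S_{X_{0}}\pitchfork S_{Y_{0}}$ at $p$ is a stable condition, so the Implicit Function Theorem produces, for $Z$ in a neighborhood $\V$ of $Z_{0}$, a unique T-singularity $q(Z)$ depending continuously on $Z$, with $q(Z_{0})=p$. The normalizing construction of Proposition \ref{FFNF_prop} is carried out by a coordinate change followed by a finite-jet Taylor reduction at the singularity; since these involve only finitely many derivatives of $X$ and $Y$ at $q(Z)$, which vary continuously in the $\Cr$ topology, the normal parameters $(\tilde{\alpha},\tilde{\beta},\tilde{\gamma})$ of $Z$ at $q(Z)$ depend continuously on $Z$ and converge to $(\alpha,\beta,\gamma)$ as $Z\to Z_{0}$.

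Next, shrinking $\V$, I would invoke openness: the conditions $\tilde{\alpha}<0$, $\tilde{\beta}<0$, $\tilde{\alpha}\tilde{\beta}>\tilde{\gamma}$ persist for every $Z\in\V$, whence $\tilde{\alpha}\tilde{\beta}(\tilde{\alpha}\tilde{\beta}-\tilde{\gamma})>0$ with $\tilde{\alpha},\tilde{\beta}<0$. Applying Lemma \ref{posinv}(2)(d) to $Z$ at $q(Z)$ then gives directly that the first return map $\phi$ of $Z$ has a saddle at the origin whose eigenvectors $v_{\mu},v_{\lambda}$, and hence the local invariant manifolds $W^{s,u}_{loc}$ tangent to them, lie in $\s^{c}$. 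For the sliding vector field, since $\tilde{\gamma}>0$ and $(\tilde{\alpha},\tilde{\beta},\tilde{\gamma})$ satisfies $\tilde{\alpha}\tilde{\beta}>\tilde{\gamma}$ with $\tilde{\alpha},\tilde{\beta}<0$, the parameters lie in the open region $R_{E}^{1}$. By Claim $1$ of Subsection \ref{slidingsection}, the phase portrait of $F_{Z}^{N}$ on $\s^{s}$ is then the one described there, and this type is constant throughout the connected open set $R_{E}^{1}$; hence $F_{Z}^{N}$ and $F_{Z_{0}}^{N}$ have the same topological type.

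The only genuinely technical point is the continuous dependence of the normal parameters on $Z$ in the $\Cr$ topology, that is, that the reduction of Proposition \ref{FFNF_prop} can be performed with continuous dependence on the germ. This is what forces one to track the finite-jet nature of the construction and the continuity of $q(Z)$. Once this is in hand, the remaining assertions follow immediately from the openness of strict inequalities together with Lemma \ref{posinv} and Claim $1$, with no further dynamical argument required.
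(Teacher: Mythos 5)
Your proposal is correct and takes essentially the same route the paper intends: the paper gives no detailed proof of Lemma \ref{lemadasela}, merely citing the Implicit Function Theorem and the continuous dependence of the normal parameters on $Z$, which is precisely what you make explicit before concluding via the openness of the strict inequalities, Lemma \ref{posinv}(2)(d) for the saddle with eigenvectors in $\s^{c}$, and Claim 1 for the sliding type in $R_{E}^{1}$. Your explicit exclusion of the boundary case $\ag\bg=\cg$ (which the paper's standing hypothesis $\ag\bg\geq\cg$ formally allows, even though by Lemma \ref{posinv} the fixed point is then non-hyperbolic, so the lemma's conclusion could not hold) is a sensible and indeed necessary reading of the statement.
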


\begin{remark}
	In what follows, $\V$ will denote the neighborhood of Lemma \ref{lemadasela}.
\end{remark}

Now we prove the existence of an invariant nonsmooth diabolo in an analytic way, this result was achieved by M. Jeffrey and A. Colombo for the semi-linear case (see \cite{J1}).

\begin{proposition}\label{diabolo-prop}
	Let $Z_{0}=(X_{0},Y_{0})\in\Or$ be a nonsmooth vector field having a T-singularity at $p$ such that the normal parameters $(\ag,\bg,\cg)$ of $Z_{0}$ at $p$ satisfy $\ag\bg\geq\cg$ and $\ag,\bg<0$. Then $Z_{0}$ has a invariant nonsmooth diabolo $D_{0}$ which prevents connections between points of $\s^{us}$ and $\s^{ss}$  through orbits of $Z$.  
\end{proposition}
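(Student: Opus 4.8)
The plan is to work in the normal coordinates of Proposition~\ref{FFNF_prop} and use the involutions $\phi_{X_0}, \phi_{Y_0}$ from Lemma~\ref{inv}, together with their explicit forms in Lemma~\ref{forma-inv}. Under the hypotheses $\ag\bg \geq \cg$ and $\ag, \bg < 0$, Lemma~\ref{posinv}(2)(d) tells us that $0$ is a saddle of the first return map $\phi_0 = \phi_{X_0}\circ\phi_{Y_0}$ with \emph{both} eigenvectors $v_\mu, v_\la$ lying in the crossing region $\s^c$. The first step is to produce, from this saddle structure, the candidate for the ``diabolo'': the invariant manifolds $W^s, W^u$ of $\phi_0$ at $p$ live in $\s^c$, and by saturating them by the flows of $X_0$ (in $M^+$) and $Y_0$ (in $M^-$) one obtains a pair of surfaces meeting along $\s$ in the two invariant lines. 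I would define $D_0$ as the union of these flow-saturated surfaces (the double-cone shape that earns the name \emph{diabolo}).

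\textbf{Key steps in order.} First I would make precise the local stable and unstable manifolds $W^{s,u}_{loc} \subset \s^c$ of the $C^r$ saddle $\phi_0$, invoking the standard invariant-manifold theorem for the diffeomorphism $\phi_0$; since $v_\mu, v_\la \in \s^c$ these curves are genuinely transverse to both $S_{X_0}$ and $S_{Y_0}$ near $p$. Second, I would use the reversibility encoded in Proposition~\ref{reversibility}, namely $\phi_{Y_0}(W^s)\subset W^u$ and $\phi_{X_0}(W^u)\subset W^s$, to see that the two invariant lines are swapped by the fold involutions; this is what glues the four local branches consistently across the two fold lines. Third, I would saturate: for a point $q$ on $W^s \cap \s^c$ with $X_0 f(q)>0$, the forward $X_0$-orbit enters $M^+$ and returns to $\s$ precisely at $\phi_{X_0}(q)$, and iterating the alternation between $X_0$-arcs in $M^+$ and $Y_0$-arcs in $M^-$ builds the invariant surface $D_0$ out of the orbit segments. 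Invariance of $D_0$ under the full flow of $Z_0$ then follows because $W^s, W^u$ are $\phi_0$-invariant and the connecting arcs are, by construction, honest trajectories of $X_0$ or $Y_0$.

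\textbf{The separation property.} The heart of the statement is that $D_0$ \emph{prevents connections} between $\s^{us}$ and $\s^{ss}$. For this I would argue that $D_0$, together with a piece of $\s$, bounds a region of $M$ near $p$, so that any orbit of $Z_0$ crossing from a neighborhood of $\s^{ss}$ to a neighborhood of $\s^{us}$ would have to cross $D_0$; invariance of $D_0$ then forbids such a crossing. Concretely, the two invariant curves $W^s, W^u \subset \s^c$ partition a punctured neighborhood of $p$ in $\s$ into sectors, and the Filippov sliding regions $\s^{ss}, \s^{us}$ (whose local positions are fixed by the fold configuration and by Proposition~\ref{SVF_FF}) fall into different sectors separated by these curves. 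Since the flow arcs realizing any would-be connection must respect the trapping surface, no such orbit exists.

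\textbf{Main obstacle.} I expect the hard part to be controlling the \emph{global gluing} of the local branches into a genuinely invariant $D_0$ in the $C^r$ (non-semilinear) setting rather than the semilinear model of \cite{J1}. In the semilinear case the flow arcs are explicit, but here one must show that the alternating sequence of $X_0$- and $Y_0$-orbit segments emanating from $W^{s,u}_{loc}$ closes up coherently and stays near $p$, which requires the higher-order terms in the involutions of Lemma~\ref{forma-inv} and in $F^N_{Z_0}$ to be genuinely subdominant. I would handle this by restricting to a small enough neighborhood where the saddle's hyperbolicity estimates (from Lemma~\ref{posinv}) dominate the $\er(\cdot)$ remainders, so that the invariant-manifold and trapping arguments survive the perturbation from the linear normal form; the transversality of $W^{s,u}_{loc}$ to $S_{X_0}, S_{Y_0}$ guaranteed by $v_\mu,v_\la\in\s^c$ is precisely what keeps the saturation well defined up to the fold lines.
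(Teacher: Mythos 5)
Your proposal follows essentially the same route as the paper's proof: both extract the saddle of $\phi_{0}$ with $W^{s},W^{u}\subset\s^{c}$ (via Lemma \ref{posinv}, which underlies Lemma \ref{lemadasela}), use the reversibility relations $\phi_{X_{0}}(W^{u})\subset W^{s}$ and $\phi_{Y_{0}}(W^{s})\subset W^{u}$ of Proposition \ref{reversibility} together with the half-plane--flipping behavior of the fold involutions to connect the branches of the invariant manifolds, and saturate by the flows of $X_{0}$ in $M^{+}$ and $Y_{0}$ in $M^{-}$ to obtain the two invariant cones containing $\s^{us}$ and $\s^{ss}$ that form $D_{0}$ and block any connection between the sliding regions. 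Your additional remarks on hyperbolicity estimates dominating the higher-order remainders and on the trapping argument merely make explicit details the paper leaves implicit.
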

\begin{proof}
	From Lemma \ref{lemadasela}, it follows that the first return map $\phi_{0}=\phi_{X_{0}}\circ\phi_{Y_{0}}$ associated to $Z_{0}$ has a hyperbolic saddle at $p$ with both eigenvectors in $\s^{c}$.
	
	Notice that the local stable manifold of the saddle $W^{s}$ is tangent to the eigenvector $v_{-}$ correspondent to the eigenvalue $\la$ and the local unstable manifold of the saddle $W^{u}$ is tangent to the eigenvector $v_{-}$ correspondent to the eigenvalue $\mu$, where $|\la|<1<|\mu|$.
	
	Moreover, $W^{s}$ and $W^{u}$ are curves on $\s$ passing through $p$ transverse to $S_{X}\cup S_{Y}$ at $p$ and $W^{s}\pitchfork W^{u}$ at $p$ ($p$ is hyperbolic). Using coordinates $(x,y)$ at $p$ (which put $Z_{0}$ in the normal form \ref{FFNF}), we can see that, $S_{X_{0}}= \fix(\phi_{X_{0}})$ is the $x$-axis, $S_{Y_{0}}= \fix(\phi_{Y_{0}})$ is a curve tangent to $y$-axis at $0$, and $W^{s}$ and $W^{u}$ are curves passing through $0$ contained in the second and the fourth quadrants which are transverse to $S_{X_{0}}\cup S_{Y_{0}}$ at $0$.
	
	Therefore we have the following situation:
	
	\begin{figure}[H]
		\centering
		\bigskip
		
		\begin{overpic}[width=6cm]{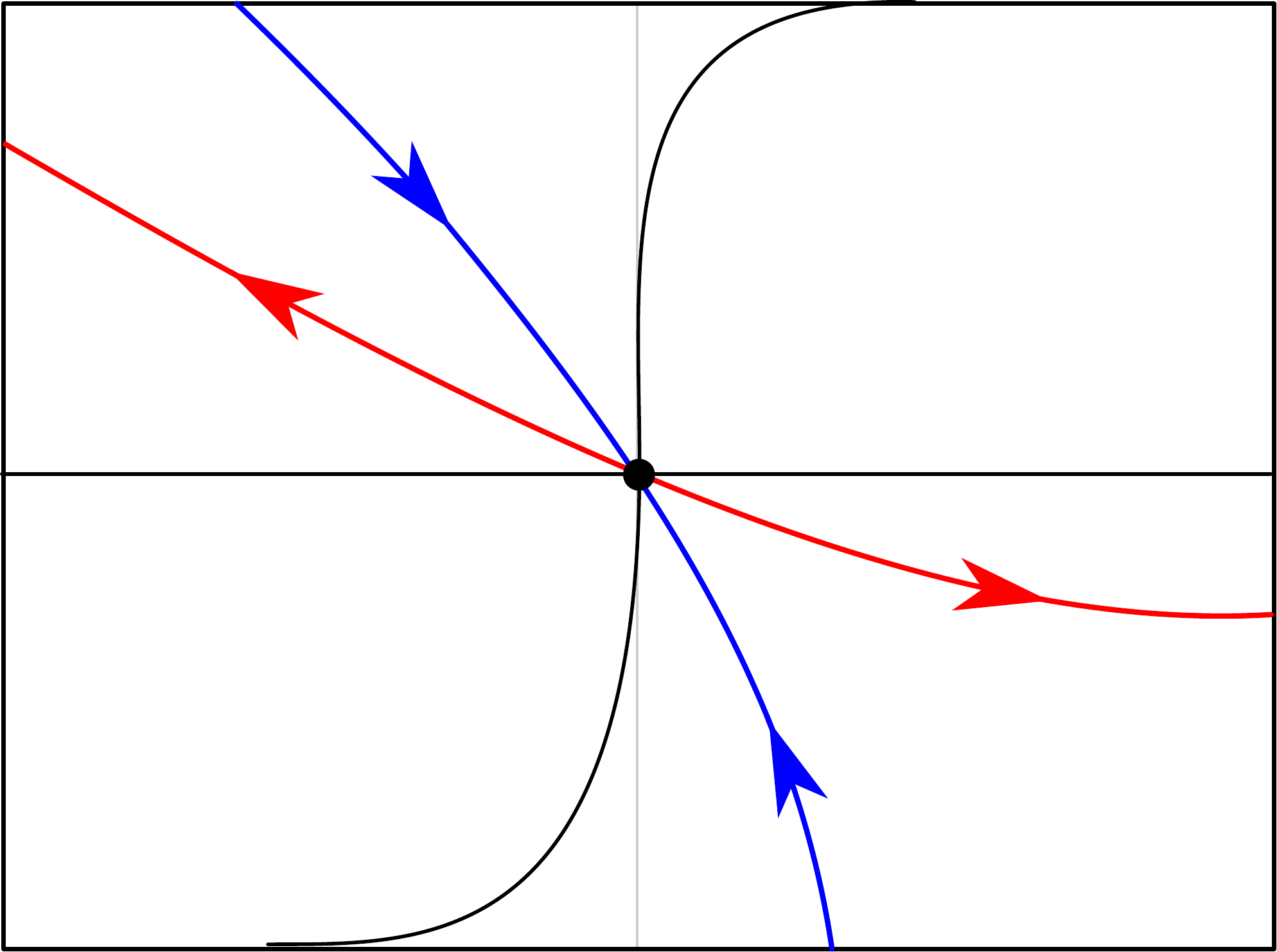}
			\put(65,70){{\scriptsize $S_{Y_{0}}$}}		
			\put(90,39){{\scriptsize $S_{X_{0}}$}}		
			\put(24,70){{\scriptsize $W^{s}$}}		
			\put(1,64){{\scriptsize $W^{u}$}}							
		\end{overpic}
		\bigskip
	\end{figure} 
	
	Now, from Proposition \ref{reversibility}, it follows that $\phi_{X_{0}}(W^{u})\subset W^{s}$, but the image of a point in the semi-plane $\{y>0\}$ through $\phi_{X_{0}}$ is a point in the semi-plane $\{y<0\}$ by the construction of $\phi_{X_{0}}$. It means that the branch of $W^{u}$ in the second quadrant has to be taken into the branch of $W^{s}$ in the fourth quadrant.
	
	Also,  $\phi_{Y_{0}}(W^{s})\subset W^{u}$. Notice that, $S_{Y_{0}}$ splits $\R^{2}$ in two connected components, $C_{-}$ and $C_{+}$. From the construction of $\phi_{Y_{0}}$, the image of a point in $C_{-}$ through $\phi_{Y_{0}}$ is a point in $C_{+}$.  It means that the branch of $W^{s}$ in the fourth quadrant is taken into the branch of $W^{u}$ in the second quadrant.
	
	These connections produce an invariant (nonsmooth) cone with vertex at the fold-fold point which contains $\s^{us}$ in its interior. Analogously, we prove that there exists an invariant (nonsmooth) cone with vertex at the fold-fold point which contains $\s^{ss}$ in its interior. These two cones produce the required nonsmooth diabolo (see Figure \ref{diabolo}).
\end{proof}

\begin{remark}
	In another words, there is no communication between $\s^{us}$ and $\s^{ss}$ in this case.
\end{remark}

\begin{figure}[H]
	\centering
	\bigskip
	\begin{overpic}[width=7cm]{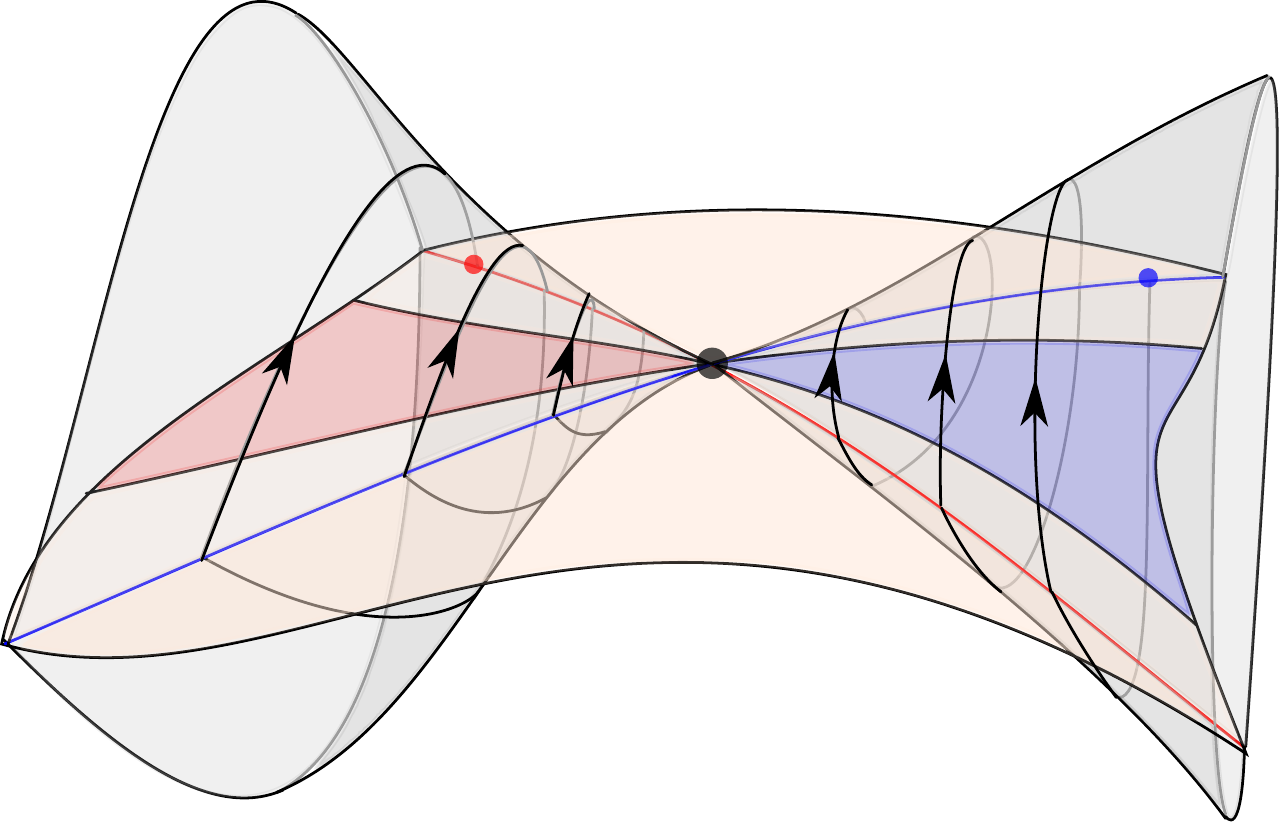}
		\put(55,32){{\footnotesize $p$}}		
		\put(25,35){{\footnotesize $\s^{us}$}}		
		\put(83,32){{\footnotesize $\s^{ss}$}}			
		\put(98,2){{\footnotesize $W^{u}$}}		
		\put(-5,13){{\footnotesize $W^{s}$}}				
	\end{overpic}
	\bigskip
	\caption{A nonsmooth diabolo $D_{0}$ of $Z_{0}$.}
	\label{diabolo}	
\end{figure} 

Now we proceed by constructing a homeomorphism between $Z\in\V$ and $Z_{0}$. 

\begin{lemma}\label{fund}
	If $Z\in \V$, there exists an order-preserving homeomorphism $h:\s^{s}(Z_{0})\rightarrow \s^{s}(Z)$ which carries orbits of $F_{Z_{0}}$ onto orbits of $F_{Z}$.
\end{lemma}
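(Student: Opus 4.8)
The plan is to reduce the statement to a topological equivalence between the \emph{normalized} sliding vector fields $F_{Z_{0}}^{N}$ and $F_{Z}^{N}$, and then to upgrade that equivalence so that it respects the sliding sub-regions and the fold lines and converts into an order-preserving equivalence for the genuine sliding fields. First I would record the orientation bookkeeping: by the Remark following Definition \ref{NSVF}, on $\s^{ss}$ the fields $F_{Z}$ and $F_{Z}^{N}$ induce the same oriented foliation, while on $\s^{us}$ they induce the same foliation with \emph{opposite} orientation (in normal coordinates $Yf-Xf=x+y$ is positive on $\s^{ss}$ and negative on $\s^{us}$). Consequently it suffices to produce an order-preserving homeomorphism $h$ carrying orbits of $F_{Z_{0}}^{N}$ onto orbits of $F_{Z}^{N}$ that maps $\s^{ss}(Z_{0})$ onto $\s^{ss}(Z)$ and $\s^{us}(Z_{0})$ onto $\s^{us}(Z)$: on each piece the orientation labels of $F^{N}$ and $F$ are consistent, so the double reversal on $\s^{us}$ cancels and $h$ is automatically order-preserving for $F_{Z_{0}}$ and $F_{Z}$. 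Lemma \ref{lemadasela} guarantees that the two phase portraits match, and I would build such an $h$ explicitly from that common structure.

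Next I would describe the phase portrait so as to construct $h$ region by region. In the present parameter range ($\ag\bg\geq\cg$, $\ag,\bg<0$, with $\cg>0$ at a T-singularity) the matrix of $F_{Z_{0}}^{N}$ in Proposition \ref{SVF_FF} has negative trace $\ag+\bg$ and positive determinant $\ag\bg-\cg$, so $p$ is a stable node; one eigendirection (the slow one) lies in $\s^{s}$ and furnishes the invariant manifold $W$ of Claim 1, while the fast eigendirection lies in $\s^{c}$. By Lemma \ref{sliding}(2) the field is transverse to the fold lines $S_{X}$, $S_{Y}$ at every fold-regular point, and by Claim 1 every sliding orbit crosses $S_{Z}$ transversally and tends to $p$ tangent to $W$. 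Hence each of $\s^{ss}$ and $\s^{us}$ is a single canonical (nodal/parabolic) region, bounded by arcs of the fold lines described in Corollary \ref{formaS}, whose orbits run from a fold line to the vertex $p$; by Lemma \ref{lemadasela} the identical description holds for $F_{Z}^{N}$ at $q(Z)$.

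I would then construct $h$ on each sector by the flow-box / canonical-region method. On $\s^{ss}(Z_{0})$ I would first prescribe a homeomorphism of the boundary arcs sending $S_{X_{0}}\mapsto S_{X}$, $S_{Y_{0}}\mapsto S_{Y}$ and $p\mapsto q(Z)$, chosen so that $W$ of $Z_{0}$ maps to $W$ of $Z$; then I would extend it to the interior by matching orbits of $F_{Z_{0}}^{N}$ to orbits of $F_{Z}^{N}$ and normalizing a monotone progress parameter (e.g.\ a rescaled flow time) along each orbit, which makes the extension a homeomorphism preserving orbit orientation. Repeating this on $\s^{us}$ and using that $\s^{ss}$ and $\s^{us}$ meet only at $p$, the two sector maps glue into a single $h\colon\s^{s}(Z_{0})\to\s^{s}(Z)$, consistent on the shared point $p$ and on the fold lines (each fold-line point other than $p$ bounds only one of the two sectors).

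The delicate step will be the continuity of $h$ at the vertex $p$: there $F_{Z}$ blows up (orbits reach $p$ in finite time by Claim 1), the two fold lines collapse together, and the two sector maps must agree. I would control this by exploiting that every orbit of both fields enters $p$ tangent to the common direction $W$, so that near $p$ the sector is pinched onto $W$ and the orbit-wise map extends continuously with $p\mapsto q(Z)$; matching the boundary parametrizations near $p$ is exactly what guarantees that the $\s^{ss}$ and $\s^{us}$ pieces fit together continuously at the vertex. A secondary point needing care is the boundary case $\ag\bg=\cg$, where the node degenerates (a zero eigenvalue appears); there the nodal description above must be replaced by the corresponding degenerate canonical region, but the same boundary-plus-flow construction applies once the phase portrait from Lemma \ref{lemadasela} is invoked.
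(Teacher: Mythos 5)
Your proposal is correct and follows essentially the same route as the paper, which dispatches this lemma in one line by combining Lemma \ref{lemadasela} (common topological type of the normalized sliding fields: a stable node with slow invariant manifold $W$ inside $\s^{s}$) with Lemma \ref{sliding} (transversality to the fold lines); the canonical-region, boundary-plus-flow construction you carry out, including the orientation bookkeeping for passing between $F_{Z}^{N}$ and $F_{Z}$ on $\s^{ss}$ versus $\s^{us}$, is exactly the intended argument. The only caveat is your closing remark on the boundary case $\ag\bg=\cg$: there the singularity of $F_{Z}^{N}$ is non-hyperbolic, so Lemma \ref{lemadasela} cannot be invoked (the topological type is not locally constant under perturbation), but this case is in fact excluded, since condition $\s(E)$ forces a hyperbolic saddle of the first return map, i.e. the strict inequality $\ag\bg>\cg$.
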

The proof of this lemma follows straightforward from Lemmas \ref{lemadasela} and \ref{sliding}.

\begin{definition}
	If $\phi:(\R^{2},0)\rightarrow (\R^{2},0)$ is a germ of diffeomorphism at $0$ having a saddle at $0$, then the \textbf{deMelo-Palis invariant} of $\phi$ is defined as:
	$$P(\phi)=\frac{\log(|\la|)}{\log(|\mu|)},$$
	where $\la,\mu$ are the eigenvalues of $d\phi(0)$ and $|\la|<1<|\mu|$.
\end{definition}

\begin{remark}
	In fact, the deMelo-Palis invariant $P$ is a moduli of stability for $\phi$. (See \cite{M,P}.)
\end{remark}

\begin{proposition} \label{sela-prop}
	If $Z\in\V$, there exists a homeomorphism $h:\s\rightarrow\s$ which is a continuous extension of the homeomorphism $h:\s^{s}(Z_{0})\rightarrow \s^{s}(Z)$ given by Lemma \ref{fund}, such that $\phi\circ h=h\circ \phi_{0}$, i.e. it is a topological equivalence between $\phi$ and $\phi_{0}$.
\end{proposition}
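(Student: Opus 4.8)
The plan is to build the extension of $h$ from $\s^s$ to all of $\s$ by first prescribing its action on the stable and unstable manifolds of the saddle $\phi_0$, then propagating it across the crossing region via the dynamics, and finally invoking the reversibility structure to glue the pieces consistently. First I would use Lemma \ref{lemadasela} and Proposition \ref{diabolo-prop}: for $Z \in \V$ the first return map $\phi$ has a hyperbolic saddle at the origin whose local invariant manifolds $W^{s,u}$ lie entirely in $\s^c$, and the invariant diabolo $D$ separates $\s^{us}$ from $\s^{ss}$. The homeomorphism $h$ already given by Lemma \ref{fund} maps $\s^s(Z_0)$ onto $\s^s(Z)$ carrying sliding orbits to sliding orbits; I must extend it continuously across the crossing region $\s^c$ while maintaining the conjugacy relation $\phi \circ h = h \circ \phi_0$.

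The key steps, in order, are as follows. I would first define $h$ on the invariant manifolds $W^{s}_0, W^{u}_0$ of $\phi_0$ by sending them homeomorphically onto $W^{s}, W^{u}$ of $\phi$, respecting orientation and matching the already-defined boundary values on $S_X \cup S_Y$ (since $W^{s,u}$ meet $\s^s$ along the fold lines). Second, I would choose a fundamental domain for the saddle dynamics: a closed region bounded by an arc transverse to $W^{s}_0$ and its image under $\phi_0$, together with the corresponding arc near $W^{u}_0$. On this fundamental domain $h$ can be prescribed as any homeomorphism compatible with the boundary identifications, and then extended to the whole crossing sector by the rule $h = \phi^{n} \circ h \circ \phi_0^{-n}$, iterating the conjugacy so that each orbit of $\phi_0$ is carried to an orbit of $\phi$. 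Third, I would verify continuity at the saddle point $p$ and along $W^{s,u}$, where orbits accumulate; here the standard Palis--de Melo analysis of saddle conjugacies enters, and this is exactly where the deMelo--Palis invariant $P(\phi)$ becomes relevant. Finally, I would check that the reversibility relations of Lemma \ref{comut} and Proposition \ref{reversibility}, namely $\phi_X(W^u) \subset W^s$ and $\phi_Y(W^s) \subset W^u$, force the extension to be consistent on both branches of the diabolo simultaneously, and that the extended $h$ glues continuously with the sliding homeomorphism across $S_X \cup S_Y$.

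The hard part will be the continuity of the extension at the saddle $p$ and the matching of moduli. Because a hyperbolic saddle conjugacy carries a modulus of stability, a naive extension by the iteration formula $h = \phi^n \circ h \circ \phi_0^{-n}$ need not converge to a well-defined continuous map at $p$ unless $P(\phi) = P(\phi_0)$; this is precisely the phenomenon recorded in the remark on the deMelo--Palis invariant. I expect the resolution to exploit the reversibility: since $\phi_0 = \phi_{X_0} \circ \phi_{Y_0}$ is a composition of involutions, its eigenvalues satisfy $\la \mu = 1$ (the product of eigenvalues of a composition of two orientation-reversing involutions is $+1$), so $P(\phi_0) = \log|\la|/\log|\mu| = -1$ identically, and likewise $P(\phi) = -1$ for every $Z \in \V$. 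Thus the deMelo--Palis modulus is constant across the whole neighborhood $\V$, which is the structural reason the obstruction vanishes and the continuous conjugacy exists. I would make this observation the crux of the argument, then assemble the global homeomorphism $h : \s \to \s$ from its restrictions to $\s^s$, to $W^{s,u}$, and to the crossing sectors, checking continuity across each interface using the transversality of the sliding vector field to $W^s \cap \s^{ss}$ established in the proof of Proposition \ref{webproof}.
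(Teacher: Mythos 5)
Your proposal is correct and follows essentially the same route as the paper: the same fundamental-domain construction propagated by $h=\phi^{n}\circ h\circ\phi_{0}^{-n}$ (with the fundamental-domain values pinned down by the sliding homeomorphism and conjugation by $\phi_{Y}$), and the same crux, namely that $\phi_{0}$ and $\phi$ are compositions of involutions of determinant $-1$, so $\la\mu=1$ and $P(\phi_{0})=P(\phi)=-1$, which is exactly what the paper uses to prove convergence of the limit defining $h$ on $W^{u}$. The only presentational difference is that the paper does not prescribe $h$ on $W^{u}$ in advance---it takes an arbitrary continuous extension on $W^{s}$ and then \emph{derives} the values on $W^{u}$ as limits along sequences $\phi_{0}^{N_{i}}(y_{i})$ with $y_{i}\in S_{X_{0}}$---a point your own discussion of the ``hard part'' already effectively concedes.
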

\begin{proof}
	The proof of this proposition is divided into steps.
	
	Let $h:\s^{s}(Z_{0})\rightarrow \s^{s}(Z)$ be the homeomorphism obtained in Lemma \ref{fund}.
	
	Notice that $Z$ has a T-singularity at $q(Z)\approx p$. Since $F_{Z_{0}}^{N}$ and $F_{Z}^{N}$ are transversal to $S_{Z_{0}}\setminus \{p\}$ and $S_{Z}\setminus\{q(Z)\}$, respectively, we can easily continuously extend $h$ on $\overline{\s^{s}(Z_{0})}$:
	$$h:\overline{\s^{s}(Z_{0})}\rightarrow\overline{\s^{s}(Z)},$$
	via limit.
	
	\textbf{Step 1:} The first task is to define a \textbf{fundamental domain} for the first return maps, $\phi$ and $\phi_{0}$.
	
	We will detail it for $\phi_{0}$. The process to construct the fundamental domain of $\phi$ is completely analogous.
	
	By the Linearization Theorem (see \cite{H}), we may assume that $\phi_{0}$ is linear. Moreover, we can consider coordinates $(x,y)$ of $\s$ at $p$ such that:
	$$\phi_{0}(x,y)=(\la_{0}x, \mu_{0}y),$$
	where $\la_{0},\mu_{0}$ are the eigenvalues of $\phi_{0}$ such that $|\mu_{0}|<1<|\la_{0}|$.	
	
	By the position of $S_{X_{0}}$, $S_{Y_{0}}$ and the invariant manifolds of the saddle, obtained in Proposition \ref{diabolo-prop}, it follows that:
	\begin{itemize}
		\item $S_{X_{0}}$ is a curve passing through $0$, with one branch in the first quadrant and another in the fourth;
		\item $S_{Y_{0}}$ is a curve passing through $0$, with one branch in the first quadrant and another in the fourth;		
		\item $S_{X_{0}}$ is tangent to the line $y=k_{0}x$;
		\item $S_{Y_{0}}$ is tangent to the line $y=K_{0}x$;
		\item $0<k_{0}<K_{0}.$
	\end{itemize}
	
	We have the following situation:
	
	\begin{figure}[H]
		\centering
		\bigskip
		\begin{overpic}[width=9cm]{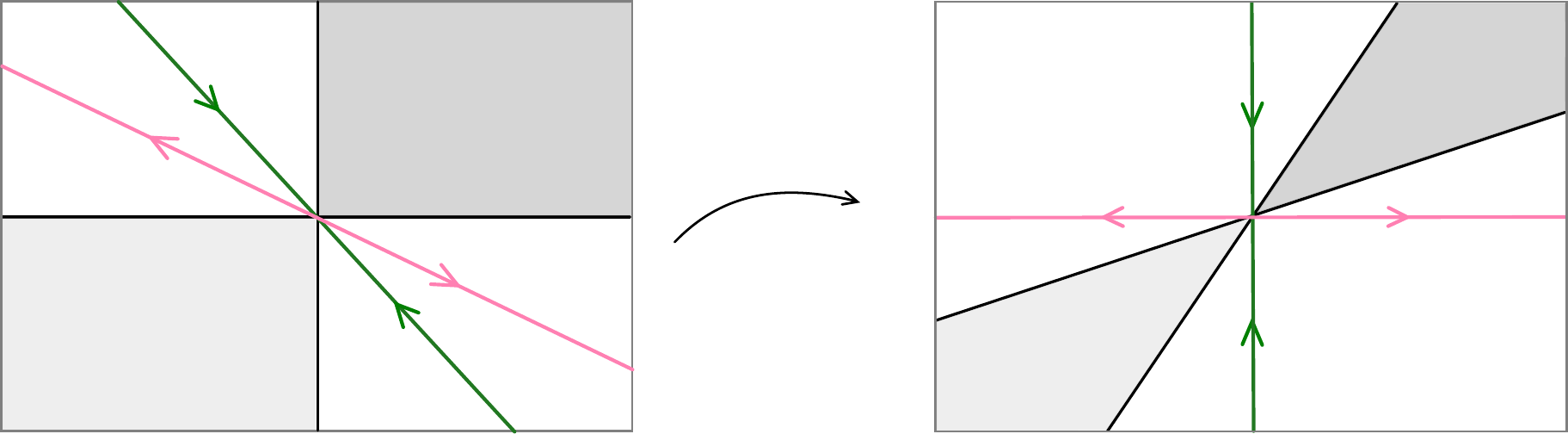}	
			\put(33,-3){{\footnotesize $W^{s}$}}
			\put(41,3){{\footnotesize $W^{u}$}}	
			\put(40,14){{\footnotesize $S_{X}$}}	
			\put(19,28){{\footnotesize $S_{Y}$}}
			\put(30,20){{\footnotesize $\s^{ss}$}}	
			\put(10,5){{\footnotesize $\s^{us}$}}	
			\put(80,-3){{\footnotesize $W^{s}$}}
			\put(102,13){{\footnotesize $W^{u}$}}	
			\put(102,20){{\footnotesize $S_{X}$}}	
			\put(87,28){{\footnotesize $S_{Y}$}}
			\put(90,20){{\footnotesize $\s^{ss}$}}	
			\put(67,5){{\footnotesize $\s^{us}$}}																						
		\end{overpic}
		\bigskip
		\caption{Change of coordinates.}
	\end{figure} 
	
	Without loss of generality, consider that $S_{X_{0}}=\{y=k_{0}x\}$ and $S_{Y_{0}}=\{y=K_{0}x\}$ and assume that these lines are the fixed points of $\phi_{X_{0}}$ and $\phi_{Y_{0}}$, respectively. It will reduce our work, nevertheless it generates no loss of generality, since the same can be done with the original sets.
	
	
	From the existence of the invariant diabolo in Proposition \ref{diabolo-prop}, it follows that, $\phi_{0}^{-1}(S_{X_{0}})$ is a line in the same region of $S_{X_{0}}$, moreover, its inclination is greater than $K_{0}$.
	
	Define:
	$$\omega_{0}=\{ (x,y);\ k_{0}x\leq y\leq K_{0}x\} \textrm{ and }\widetilde{\omega}_{0}=\phi_{Y_{0}}(\omega_{0}).$$
	
	Notice that $R_{0}=\omega_{0}\cup \widetilde{\omega}_{0}$ is the region delimited by the lines $S_{X_{0}}$ and $\phi_{0}^{-1}(S_{X_{0}})$.

	Now it is immediate that $\phi_{0}^{n}(S_{X_{0}})\rightarrow W^{u}$ when $n\rightarrow\infty$ and $\phi_{0}^{n}(S_{X_{0}})\rightarrow W^{s}$ when $n\rightarrow-\infty$. Therefore, the first and the third quadrants are partitioned by $\phi_{0}^{n}(R_{0})$, $n\in\Z$.
	
	In another words, if $Q=\{(x,y);\ xy>0\}$, then
	$$Q=\bigcup_{n\in\Z}\phi_{0}^{n}(R_{0}).$$
	Therefore, we say that $R_{0}$ is the fundamental domain of $\phi_{0}$.
	
	\begin{figure}[H]
		\centering
		\bigskip
		\begin{overpic}[width=7cm]{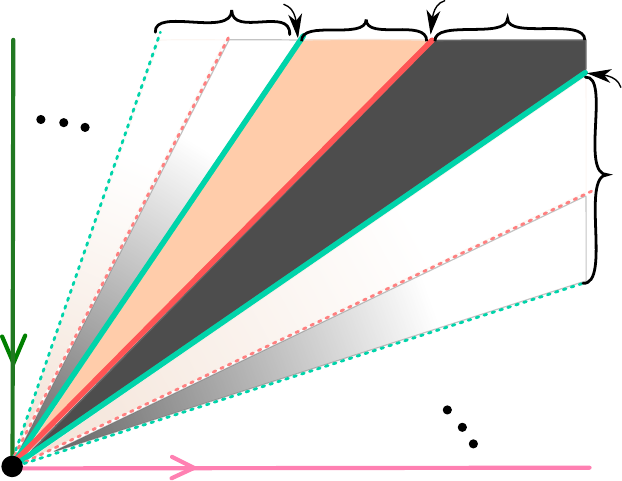}	
			\put(0,74){{\footnotesize $W^{s}$}}
			\put(98,0){{\footnotesize $W^{u}$}}	
			\put(25,77){{\scriptsize $\phi^{-1}(R_{0})$}}	
			\put(42,77){{\scriptsize $\phi_{Y}(S_{X})$}}
			\put(58,76){{\scriptsize $\widetilde{\omega}_{0}$}}	
			\put(72,76){{\scriptsize $S_{Y}$}}	
			\put(80,76){{\scriptsize $\omega_{0}$}}	
			\put(101,61){{\scriptsize $S_{X}$}}	
			\put(101,48){{\scriptsize $\phi(R_{0})$}}						
		\end{overpic}
		\bigskip
		\caption{Fundamental domain $R_{0}= \omega_{0}\cup\widetilde{\omega}_{0}$ in the first quadrant.}
	\end{figure} 
	
	Similarly, we can consider coordinates $(x,y)$ of $\s$ at $p$ such that:
	$$\phi(x,y)=(\la x, \mu y),$$
	where $\la,\mu$ are the eigenvalues of $\phi$ such that $|\mu|<1<|\la|$.
	Therefore, there exists $R=\omega\cup\widetilde{\omega}$, where $\omega$ is the region delimited by $S_{X}$ and $S_{Y}$ and $\widetilde{\omega}=\phi^{Y}(\omega)$.
	
	Also $Q=\bigcup_{n\in\Z}\phi^{n}(R)$, and $R$ is the region delimited by $S_{X}$ and $\phi^{-1}(S_{X})$.
	
	In both cases, each orbit of $\phi_{0}$ (and $\phi$) passes a unique time in each sector of the partition of $Q$.

	\textbf{Step 2:} Extending the domain of $h$ into $h:Q\rightarrow Q$.
	
	Notice that $h:\omega_{0}\rightarrow \omega$ is already defined (it is the homeomorphism $h:\overline{\s^{s}(Z_{0})}\rightarrow\overline{\s^{s}(Z)}$ in these coordinates).
	
	If $q\in \widetilde{\omega}_{0}$, then $q=\phi_{Y_{0}}(\tilde{q})$, for some $\tilde{q}\in\omega_{0}$, therefore, define:
	$$h(q)=\phi_{Y}(h(\tilde{q})).$$
	
	Clearly, it is a continuous extension of $h$ from $\omega_{0}$ into $R_{0}$. Now, we have defined a homeomorphism $h:R_{0}\rightarrow R$.
	
	The extension to $Q$ follows in a natural way (since it is defined in a fundamental domain).
	
	In fact, if $q\in Q$, there exists	a unique $\tilde{q}\in R_{0}$ and a unique $n\in\Z$, such that $q=\phi_{0}^{n}(\tilde{q})$. Define:
	$$h(q)=\phi^{n}(h(\tilde{q})).$$
	
	Clearly, $h: Q\rightarrow Q$ is a homeomorphism satisfying:
	$$h( \phi_{0}(q))=\phi(h(q)),$$
	for each $q\in Q$.
	
	\textbf{Step 3:} Extending $h$ on both $W^{u}$ and $W^{s}$ in a continuous fashion.
	
	This is the most delicate part of the proof.
	
	Consider an arbitrary continuous extension of $h$ on $W^{s}$.
	
	Now, the difficult task is to continuously extend it to $W^{u}$, and it will be only possible because:
	$$P(\phi_{0})=-1=P(\phi),$$
	where $P$ is the deMelo-Palis invariant. 
	
	Only the extension in the first quadrant will be detailed. The extensions in the other quadrants are similar. 
	
	We extend $\phi$ in the following way. 
	
	Fix $w=(d,0)\in W^{u}$, then, there exists a sequence $w_{i}=\phi_{0}^{N_{i}}(y_{i})$ such that $N_{i}\rightarrow\infty$ when $i\rightarrow\infty$ and $y_{i}$ is a sequence contained in $S_{X_{0}}\cap\{x,y>0 \}$ such that $y_{i}\rightarrow 0$ when $i\rightarrow\infty$, which satisfies:
	
	$$\lim_{i\rightarrow\infty}\phi_{0}^{N_{i}}(y_{i})=w.$$
	
	Notice that, the homeomorphism $h$ is already defined for the sequence $w_{i}$. Since we want a continuous extension and an equivalence, we must define:
	$$h(w)=\lim_{i\rightarrow\infty}h(\phi_{0}^{N_{i}}(y_{i}))=\lim_{i\rightarrow\infty}\phi^{N_{i}}(h(y_{i})).$$
	
	Our work is to prove that the limit above exists. Then, $h$ will be extended on $W^{u}$ by doing this process for every $q\in[w,\phi_{0}(w)]$ and then extend it through the images of this fundamental domain by $\phi_{0}$.		
	
	Now, we prove the existence of the limit.
	
	Since $\phi_{0}(S_{X_{0}})=S_{X}$ and $\phi^{n}(S_{X})\in W^{u}$, it follows directly that:
	$$\lim_{i\rightarrow\infty}\pi_{2}(\phi^{N_{i}}(h(y_{i})))=0.$$
	
	Therefore, $\pi_{2}(h(w))=0$ and it is well-defined. The problem happens for the first coordinate.
	Consider:
	\begin{enumerate}
		\item $w=(d,0)$;
		\item $y_{i}\rightarrow 0$, $y_{i}\in S_{X_{0}}$, for every $i$;
		\item $N_{i}\rightarrow$ such that $\phi_{0}^{N_{i}}(y_{i})=w_{i}\rightarrow w$;
		\item $t_{i}\rightarrow\infty$, $x_{i}\rightarrow x\in W^{s}$ such that $y_{i}=\phi_{0}^{t_{i}}(x_{i})$.
	\end{enumerate}
	
	\begin{figure}[H]
		\centering
		\bigskip
		\begin{overpic}[width=6cm]{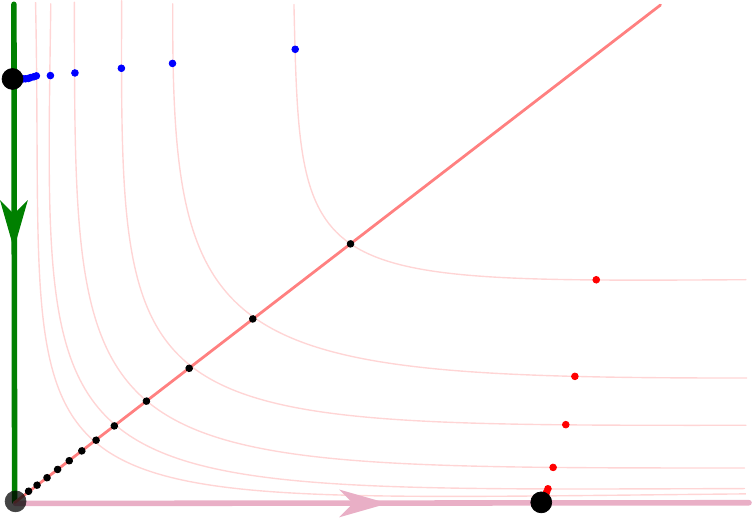}	
			\put(-5,57){{\footnotesize $x$}}
			\put(41,61.5){{\footnotesize $x_{i}$}}	
			\put(26,60){{\footnotesize $x_{i+1}$}}	
			\put(-4,-3.5){{\footnotesize $\vec{0}$}}
			\put(50,36){{\footnotesize $y_{i}$}}
			\put(37,25){{\footnotesize $y_{i+1}$}}		
			\put(81,32){{\footnotesize $w_{i}$}}
			\put(77,20){{\footnotesize $w_{i+1}$}}		
			\put(70.5,-5){{\footnotesize $w$}}								
			\put(102,0){{\footnotesize $W^{u}$}}
			\put(0,69){{\footnotesize $W^{s}$}}		
			\put(83,61.5){{\footnotesize $S_{X_{0}}$}}			
		\end{overpic}
		\bigskip
		\caption{Sequences $(x_{i})$, $(y_{i})$ and $(w_{i})$.}
	\end{figure} 
	
	Now, denote: $d_{i}=\pi_{1}(w_{i})$, $\widetilde{d_{1}}=\pi_{1}(\widetilde{w_{i}})$, $\widetilde{y_{i}}=h(y_{i})$, $a_{i}=\pi_{2}(x_{i})$ and $\widetilde{a_{i}}=\pi_{2}(h(x_{i}))$. Hence, we must prove that $\widetilde{d_{i}}$ converges.
	
	Notice that, since $h$ is continuously extended for $W^{s}$, it follows that $\widetilde{a_{i}}$ is a convergent sequence. Let $\lim \widetilde{a_{i}}=\widetilde{a}$. 	
	$$
	\begin{array}{lclcl}
	\widetilde{d_{1}} &=&\vspace{0.3cm} \pi_{1}(h(\phi_{0}^{N_{i}}(y_{i}))) 
	%
	&=& \la^{N_{i}}\pi_{1}(\widetilde{y_{i}})
	\end{array}
	$$	
	Now, observe that:
	$$\widetilde{y_{i}}=h(y_{i})=h(\phi_{0}^{t_{i}}(x_{i}))=\phi^{t_{i}}(\widetilde{x_{i}})=(\la^{t_{i}}\pi_{1}(\widetilde{x_{i}}),\mu^{t_{i}}\pi_{2}(\widetilde{y_{i}})).$$
	
	Since $\tilde{y_{i}}\in S_{X}=\{y=kx\}$, it follows that:
	$$\pi_{1}(\widetilde{y_{i}})=\frac{1}{k}\pi_{2}(\widetilde{y_{i}})=\frac{1}{k}\mu^{t_{i}}\pi_{2}(\widetilde{y_{i}}).$$ 
	
	Hence:
	$$\widetilde{d_{i}}=\frac{1}{k}\la^{N_{i}}\mu^{t_{i}}\pi_{2}(\widetilde{y_{i}}),$$
	and applying the logarithm, we obtain:
	$$\log(\widetilde{d_{i}}k)=N_{i}\log(\la) +t_{i}\log(\mu)+\log(\widetilde{a_{i}}).$$
	
	With the same process, we also obtain:
	
	$$\log(d_{i}k_{0})=N_{i}\log(\la_{0}) +t_{i}\log(\mu_{0})+\log(a_{i}).$$
	
	Since $\log(d_{i}k_{0})$ and $\log(a_{i})$ converge, it follows that $N_{i}\log(\la_{0}) +t_{i}\log(\mu_{0})$ converges.
	
	Now, using that $P(\phi_{0})=P(\phi)$, it is immediate that $N_{i}\log(\la) +t_{i}\log(\mu)$ converges.
	
	Since $\widetilde{a_{i}}\rightarrow \widetilde{a}$, it follows that $\widetilde{d_{i}}$ converges and the proof is complete.
\end{proof}


\begin{remark}
	Notice that, both $\phi$ and $\phi_{0}$ are composition of elements of $W^{r}$, therefore a perturbation of the first return map $\phi_{0}$ still is a composition of two involutions. Hence the diffeomorphism $\phi_{0}$ is perturbed only over the codimension one submanifold $P^{-1}(-1)$ of Diff$(\rn{2},0)$ (space of germs of diffeomorphisms at $0$.).
\end{remark}
It follows straightforward from the previous results:
\begin{proposition}\label{proptsingstable}
	Let $Z_{0}=(X_{0},Y_{0})\in\Or$ be a germ of nonsmooth vector field having a Teixeira singularity at $p$. Let $(\ag,\bg,\cg)$ be the normal parameters of $Z_{0}$ at $p$. If $\ag\bg\geq\cg$ and $\ag,\bg<0$, then $Z_{0}$ is locally structurally stable at $p$.	
\end{proposition}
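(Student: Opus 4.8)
The plan is to assemble the pieces already established and promote the two-dimensional conjugacy on $\s$ to a genuine topological equivalence of the full three-dimensional flows. First I would observe that the hypotheses $\ag\bg\geq\cg$ and $\ag,\bg<0$ place us precisely in case $(d)$ of Lemma \ref{posinv}: the fixed point $p$ of the first return map $\phi_{0}=\phi_{X_{0}}\circ\phi_{Y_{0}}$ is a hyperbolic saddle whose eigendirections $v_{\mu},v_{\la}$ both lie in the crossing region $\s^{c}$. Thus we are exactly in the setting covered by Lemma \ref{lemadasela}, and I would fix the neighborhood $\V$ furnished there, so that every $Z\in\V$ also has a T-singularity at some $q(Z)\approx p$ whose first return map $\phi$ is a saddle with both local invariant manifolds in $\s^{c}$ and whose sliding field $F_{Z}^{N}$ is topologically equivalent to $F_{Z_{0}}^{N}$.

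Next I would invoke Proposition \ref{sela-prop}, which produces, for each $Z\in\V$, a homeomorphism $h:\s\to\s$ extending the sliding-region conjugacy of Lemma \ref{fund} and satisfying $\phi\circ h=h\circ\phi_{0}$. Since the construction in the proof of that proposition is carried out fundamental-domain by fundamental-domain using the individual involutions $\phi_{Y_{0}},\phi_{Y}$ (and symmetrically $\phi_{X_{0}},\phi_{X}$), the resulting $h$ is in fact an equivalence of the \emph{pair} of involutions, i.e.\ $h\circ\phi_{X_{0}}=\phi_{X}\circ h$ and $h\circ\phi_{Y_{0}}=\phi_{Y}\circ h$; this is the form in which I will need it. At this stage $h$ already carries $S_{X_{0}},S_{Y_{0}},\s^{ss},\s^{us},\s^{c}$ onto the corresponding objects of $Z$, preserving the orientation of every sliding orbit.

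The substantive step is to extend $h$ to a homeomorphism $H:U\to V$ between neighborhoods of $p$ and $q(Z)$ in $M$ carrying orbits of $Z_{0}$ onto orbits of $Z$, in the sense of Definition \ref{equivalencedef}. I would transport $h$ along the flows. A point of $M^{+}$ near $p$ lies on a unique arc of an $X_{0}$-orbit whose endpoints on $\s$ are a point $m$ and its image $\phi_{X_{0}}(m)$; mapping this arc onto the $X$-orbit arc joining $h(m)$ to $\phi_{X}(h(m))=h(\phi_{X_{0}}(m))$, linearly in flow time, gives a continuous extension over $M^{+}$, and the relation $h\circ\phi_{X_{0}}=\phi_{X}\circ h$ is exactly what makes the two endpoints agree. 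The symmetric construction with $Y_{0},Y$ covers $M^{-}$, while on $\s^{s}$ we retain the sliding conjugacy of Lemma \ref{fund}; because $\phi\circ h=h\circ\phi_{0}$, these partial definitions glue along whole orbits that alternately cross and slide.

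The \emph{main obstacle} I anticipate is global consistency and continuity of $H$ along the tangency set $S_{Z_{0}}$ and at $p$ itself, where the orbit arcs degenerate and flow times diverge. This is precisely where the invariant nonsmooth diabolo $D_{0}$ of Proposition \ref{diabolo-prop} becomes indispensable: it guarantees that $\s^{us}$ and $\s^{ss}$ never communicate through orbits of $Z_{0}$, so no single orbit can impose conflicting definitions of $H$ coming from the crossing and the sliding recipes, and the separatrices $W^{s},W^{u}\subset\s^{c}$ delimit the regions on which each recipe applies. Since $P(\phi_{0})=-1=P(\phi)$ forces the two diabolos to have the same combinatorial type, matching their leaves yields the continuity of $H$ across $S_{Z_{0}}$ and at $p$, completing the equivalence and hence establishing the local structural stability of $Z_{0}$ at $p$.
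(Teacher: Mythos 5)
Your proposal is correct and follows essentially the same route as the paper: the authors state that Proposition \ref{proptsingstable} ``follows straightforward from the previous results,'' meaning exactly the assembly you perform of Lemma \ref{lemadasela}, Lemma \ref{fund}, Proposition \ref{diabolo-prop} and Proposition \ref{sela-prop} (including the deMelo--Palis invariant identity $P(\phi_{0})=-1=P(\phi)$). Your added details---that the construction of $h$ in Proposition \ref{sela-prop} conjugates the individual involutions, and the flow-time transport of $h$ to $M^{\pm}$ with the diabolo ensuring no conflicting definitions along orbits---are precisely the routine steps the paper leaves implicit.
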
	
%
%
%
%
%
%
%
%

Finally, we conclude the proof of Theorem A:

\begin{proof}[Proof of Theorem A]
	Notice that $Z$ satisfies condition $\s(E)$ at $p$ if, and only if, the normal parameters $(\ag,\bg,\cg)$ of $Z$ at $p$ satisfy $\ag\bg\geq\cg$ and $\ag,\bg<0$.
	
	The result follows directly from Propositions \ref{NH-unst}, \ref{webproof} and \ref{proptsingstable}, 
\end{proof}

\section{Proofs of Theorems B, C and Corollary E}

In this section we intend to discuss the hyperbolic and the parabolic case of the fold-fold singularity in order to complete the characterization of $\s_{0}$.

\subsection{Hyperbolic Fold-Fold}

Let $Z=(X,Y)\in\Or$ be a nonsmooth vector field having a hyperbolic fold-fold point at $p$ such that $S_{X}\pitchfork S_{Y}$ at $p$. Consider the normal coordinates $(x,y,z)$ of $Z$ at $p$ and let $(\ag,\bg,\cg)$ be the normal parameters of $Z$ at $p$. In this case we do not have any orbit of $X$ or $Y$ connecting points of $\s$, therefore the local structural stability of $Z$ at $p$ depends only on the sliding dynamics which is generically characterized in section \ref{slidingsection}.

\begin{proposition}\label{vis}
	Let $Z_{0}=(X_{0},Y_{0})\in\Or$ be a nonsmooth vector field having a visible fold-fold point at $p$ such that $S_{X_{0}}\pitchfork S_{Y_{0}}$ at $p$.  Let $(\ag_{0},\bg_{0},\cg_{0})$ be the normal parameters of $Z_{0}$ at $p$. Then, $Z_{0}$ is locally structurally stable at $p$ if and only if $(\ag_{0},\bg_{0},\cg_{0})\in R_{H}^{1}\cup R_{H}^{2}$.
\end{proposition}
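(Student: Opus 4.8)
The plan is to exploit the remark made just before the statement: at a visible fold-fold both local flows leave $\s$, the trajectory of $X_0$ remaining in $\overline{M^+}$ and that of $Y_0$ in $\overline{M^-}$, so no single orbit of $X_0$ or $Y_0$ returns to $\s$ near $p$. Hence the whole local phase portrait of $Z_0$ is determined, up to $\s$-equivalence, by the portrait of the sliding field $F_{Z_0}$ on $\overline{\s^s}$ together with the position of the fold lines $S_{X_0},S_{Y_0}$. I would first make this reduction precise: given any $Z$ near $Z_0$, a homeomorphism of $\overline{\s^s}$ carrying orbits of $F_{Z_0}$ to orbits of $F_Z$ and $S_{X_0}\cup S_{Y_0}$ onto $S_X\cup S_Y$ extends to a neighborhood of $p$ in $M$ by transporting it along the orbits of $Z$, which off $\s$ are simple flow boxes of the visible fields (the crossing region $\s^c$ being handled by transversal flow boxes). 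Thus local structural stability of $Z_0$ at $p$ is equivalent to structural stability of $F_{Z_0}$ on $\overline{\s^s}$ relative to its fold boundary.

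Next I would analyze $F_{Z_0}$ through its linear part. By Proposition \ref{SVF_FF}, with $\dg=\sgn(X_0^2f(p))=+1$ and $\cg<0$, the linearization of $F_{Z_0}^N$ at $p$ is $A=\left(\begin{smallmatrix}\ag&-\cg\\ 1&-\bg\end{smallmatrix}\right)$, so $\det A=\cg-\ag\bg$, $\textrm{tr}\,A=\ag-\bg$, and the discriminant equals $(\ag+\bg)^2-4\cg>0$; hence the eigenvalues are always real and the only possible degeneracy is a zero eigenvalue, occurring exactly when $\ag\bg=\cg$. A short computation of the null eigenvector $(\bg,1)$ shows that this direction lies in $\s^s$ (indeed in $\s^{us}$) precisely when $\bg<0$, that is, precisely on the surface $\partial R_H^1=\{\ag\bg=\cg,\ \ag>0,\ \bg<0\}$; off this surface either the equilibrium is hyperbolic, or the vanishing eigendirection points into the crossing region $\s^c$, where it is invisible to the sliding orbits. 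Combined with the transversality of $F_{Z_0}^N$ to the fold lines away from $p$ (Lemma \ref{sliding}), this shows that the portrait on $\overline{\s^s}$ has one of exactly the two robust types of Claim 3.

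For sufficiency I would use that $R_H^1\cup R_H^2=\{\cg<0\}\setminus\partial R_H^1$ is open, so the normal parameters of every $Z$ near $Z_0$ stay in the same region and yield a sliding portrait of the same type; building the equivalence sector by sector on $\overline{\s^s}$ (a Hartman--Grobman normal form at $p$ when the equilibrium is hyperbolic, and matching the invariant curve together with the transverse fold arcs in general) and extending it off $\s$ by the reduction above produces the desired homeomorphism. For necessity I would show that a point of $\partial R_H^1$ is unstable: there $F_{Z_0}^N$ has a center manifold $W\subset\s^s$ through $p$, and arbitrarily small perturbations moving the parameters into $R_H^1$ versus $R_H^2$ switch whether the neighbouring sliding orbits reach $p$ (equivalently create or destroy $W$ and a pseudo-equilibrium), giving non-equivalent portraits. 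This is exactly the failure of condition $\s(H)$.

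The step I expect to be the main obstacle is the construction of the $\s$-equivalence near the corner $p$, where $\overline{\s^s}$ is bounded by the two fold lines meeting at $p$: one must simultaneously match the behaviour of $F_{Z_0}^N$ along $W$, its transversality to each fold arc, and the gluing with the crossing sectors, while keeping the homeomorphism continuous at the non-smooth vertex. The accompanying linear-algebra and flow-box estimates are routine.
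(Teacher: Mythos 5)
Your proposal is correct and follows essentially the same route as the paper: since neither $X_{0}$ nor $Y_{0}$ returns to $\s$ near a visible fold-fold, the paper likewise reduces everything to the sliding dynamics, gets persistence of the singularity and of the parameter region via the Implicit Function Theorem, builds the equivalence on $\s^{s}$ and extends it off $\s$ by the flows (citing Filippov), with necessity coming from the sliding bifurcation on $\partial R_{H}^{1}$. Your eigenvalue computation — in particular checking that on the branch $\ag\bg=\cg$ with $\bg>0$ the null direction $(\bg,1)$ lies in $\s^{c}$, so only the branch with $\bg<0$ (condition $\s(H)$: a center manifold in $\s^{s}$) destroys stability — just fills in details the paper delegates to the unproved Claims of Section 5.2.
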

\begin{proof}[Outline]
	The first implication is obvious since $F_{Z_{0}}$ presents bifurcations in $\s^{s}$.
	To prove the converse, let $(\ag_{0},\bg_{0},\cg_{0})$ be the normal parameters of $Z_{0}$ at $p$. Using Implicit Function Theorem we can find a neighborhood $\V$ of $Z_{0}$ in $\Or$ such that every $Z\in\V$ has a hyperbolic fold-fold point $q(Z)$ near $p$ and the normal parameters of $Z$ at $q(Z)$ are close to $(\ag_{0},\bg_{0},\cg_{0})$. 
	
	Now, it is easy to construct a germ homeomorphism $h:\s\rightarrow\s$ carrying sliding orbits of $F_{Z_{0}}$ onto sliding orbits of $F_{Z}$. Extend it to a germ of homeomorphism $h:(M,p)\rightarrow (M,q(Z))$ using the flows in the same way of \cite{F} (Lemma 3, page 271).
\end{proof}

\subsection{Parabolic Fold-Fold}

Let $Z=(X,Y)\in\Or$ be a nonsmooth vector field having an invisible-visible fold-fold point at $p$ such that $S_{X}\pitchfork S_{Y}$ at $p$. Consider the normal coordinates $(x,y,z)$ of $Z$ at $p$, and let $(\ag,\bg,\cg)$ be the normal parameters of $Z$ at $p$. 

Proceeding as in the elliptic case, $Z$ has an involution $\phi_{X}$ associated to the invisible fold of $X$, and remember that in the normal coordinates it is completely known:
$$\phi_{X}(x,y)=(x-2\ag y,-y).$$
Now we use it to study the connections between sliding orbits, when they exist.

\begin{lemma}
	Let $Z=(X,Y)\in\Or$ be a nonsmooth vector field having an invisible-visible fold-fold point at $p$ such that $S_{X}\pitchfork S_{Y}$ at $p$.  Let $(\ag,\bg,\cg)$ be the normal parameters of $Z$ at $p$. Then, $\phi_{X}(S_{Y})\pitchfork S_{Y}$ at $p$ if and only if $\ag\neq 0$.
\end{lemma}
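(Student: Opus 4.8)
The plan is to reduce the transversality condition to a one-line computation of tangent directions read off from the normal form. First I would collect the first-order data at $p$ from the earlier results. By Corollary \ref{formaS}, in the normal coordinates $(x,y,z)$ we have $\s=\{z=0\}$ and
$$S_{Y}\cap U=\{(g(y),y,0);\ y\in(-\e,\e)\},\qquad g(y)=\er(y^{2}),$$
so that $g(0)=0$ and $g'(0)=0$. Hence $S_{Y}$ passes through $p=(0,0)$ and its tangent line there is $T_{p}S_{Y}=\operatorname{span}\{(g'(0),1)\}=\operatorname{span}\{(0,1)\}$, the $y$-axis. I would also use the explicit involution associated to the invisible fold of $X$, which in these coordinates is $\phi_{X}(x,y)=(x-2\ag y,\,-y)$. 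Evaluating gives $\phi_{X}(0,0)=(0,0)=p$ (equivalently $p\in S_{X}=\fix(\phi_{X})$), so $\phi_{X}(S_{Y})$ is again a regular curve through $p$.

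Next I would differentiate. Since $\phi_{X}$ is a germ of diffeomorphism fixing $p$, the tangent to the image curve is $T_{p}(\phi_{X}(S_{Y}))=d\phi_{X}(0,0)\,T_{p}S_{Y}$. From the formula above,
$$d\phi_{X}(0,0)=\begin{pmatrix} 1 & -2\ag \\ 0 & -1 \end{pmatrix},$$
so $d\phi_{X}(0,0)\,(0,1)=(-2\ag,-1)$ and $T_{p}(\phi_{X}(S_{Y}))=\operatorname{span}\{(-2\ag,-1)\}$. The same direction is obtained by parametrizing $\phi_{X}(S_{Y})$ directly as $y\mapsto(g(y)-2\ag y,\,-y)$ and differentiating at $y=0$, where the $\er(y^{2})$ term of $g$ contributes nothing.

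Finally, $S_{Y}$ and $\phi_{X}(S_{Y})$ are two regular curves through the common point $p$ inside the $2$-dimensional manifold $\s$, so their transversality at $p$ is equivalent to the linear independence of their tangent directions:
$$\det\begin{pmatrix} 0 & -2\ag \\ 1 & -1 \end{pmatrix}=2\ag\neq 0.$$
Therefore $\phi_{X}(S_{Y})\pitchfork S_{Y}$ at $p$ if and only if $\ag\neq 0$, which is exactly the claim.

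There is no genuine analytic difficulty here: once the normal form is in place the computation is purely first order. The only point deserving care, which I would treat as the main (minor) obstacle, is justifying that transversality of the two embedded curves at $p$ is governed entirely by their tangent lines at $p$, so that the higher-order part of $g$ and any higher-order part of $\phi_{X}$ are irrelevant. This is precisely the statement that transversality of regular curves is a first-order condition, which is why only $g'(0)=0$ and $d\phi_{X}(0,0)$ enter the final determinant.
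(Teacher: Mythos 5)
Your proof is correct and takes essentially the same route as the paper: both parametrize $S_{Y}$ as $(g(y),y)$ with $g(y)=\er(y^{2})$ via Corollary \ref{formaS}, use the explicit involution $\phi_{X}(x,y)=(x-2\ag y,-y)$ to get the tangent directions $(0,1)$ and $(-2\ag,-1)$ at $p$, and conclude transversality if and only if $\ag\neq 0$. Your version merely makes explicit the final determinant computation and the first-order nature of transversality, which the paper leaves as ``the result follows from these expressions.''
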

\begin{proof}	
	From Corollary \ref{formaS}, we have that $S_{Y}=\{(g(y),y);\ y\in (-\e,\e)\}$, for some $\e>0$, where $g$ is a smooth function with $g(y)=\er(y^{2})$. Therefore $T_{0}S_{Y}=\textrm{span}\{(0,1)\}$.
	
	On the other hand, $\phi_{X}(S_{Y})=\{(g(y)-2\ag y,-y);\ y\in (-\e,\e)\}$. Then $T_{0}\phi_{X}(S_{Y})=\textrm{span}\{(-2\ag,-1)\}$. The result follows from these expressions
\end{proof}

\begin{lemma}\label{IV}
	Let $Z=(X,Y)\in\Or$ be a nonsmooth vector field having an invisible-visible fold-fold point at $p$ such that $S_{X}\pitchfork S_{Y}$ at $p$.  Let $(\ag,\bg,\cg)$ be the normal parameters of $Z$ at $p$. Then, $\phi_{X}(\s^{us})\cap \s^{ss}= \emptyset$ if and only if $\ag> 0$.
\end{lemma}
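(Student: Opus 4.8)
The plan is to characterize the region $\phi_X(\s^{us})\cap \s^{ss}=\emptyset$ by working directly with the explicit coordinate descriptions we already have in hand. From Corollary \ref{formaS} we know that in normal coordinates $S_Y$ is the curve $\{(g(y),y);\ y\in(-\e,\e)\}$ with $g(y)=\er(y^2)$, so $S_Y$ is tangent to the $y$-axis at $0$ and $S_X$ is the $x$-axis. Since $p$ is an invisible-visible fold-fold, both $X^2f(p)<0$ and $Y^2f(p)<0$, and using the expression \ref{FFNF} for $Z$ together with the sign conventions ($\dg=\sgn(X^2f(p))$, $\sgn(\gamma)=\sgn(Y^2f(p))$) I can read off on which side of each fold curve the sliding regions $\s^{ss}$ and $\s^{us}$ lie. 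Concretely, I would first determine the two open arcs into which $S_X$ and $S_Y$ cut a punctured neighborhood of $0$ in $\s$, and label which arcs bound $\s^{us}$ and which bound $\s^{ss}$, using the signs of $Xf$ and $Yf$ from the definitions of $\s^{ss}$ and $\s^{us}$.

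Next I would compute the image $\phi_X(\s^{us})$. Because $\phi_X(x,y)=(x-2\ag y,-y)$ is a linear involution fixing $S_X$ (the $x$-axis) and flipping the sign of $y$, it maps the half where $y<0$ to the half where $y>0$ and vice versa; so $\phi_X$ carries the sliding region $\s^{us}$ (lying on one side of $S_X$) into the half-plane containing $\s^{ss}$. The crucial question then becomes whether the image $\phi_X(\s^{us})$, which is bounded in part by $\phi_X(S_Y)$, actually overlaps $\s^{ss}$, which is bounded by $S_Y$. This reduces the problem to a comparison of the two boundary curves $S_Y$ and $\phi_X(S_Y)$ in the same half-plane, together with the angular sectors they subtend at the origin.

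The key computation is that $\phi_X(S_Y)=\{(g(y)-2\ag y,-y);\ y\in(-\e,\e)\}$ has tangent direction $\mathrm{span}\{(-2\ag,-1)\}$ at $0$ (exactly as in the previous lemma), whereas $S_Y$ is tangent to $(0,1)$. Thus the sign of $\ag$ controls whether the sector cut out by $\phi_X(S_Y)$ on the $\s^{ss}$-side of $S_X$ lies entirely on the far side of $S_Y$ (giving empty intersection) or straddles it (giving nonempty intersection). I would make this precise to first order: parametrize both boundary curves near $0$, compute the leading linear terms, and determine the relative position of the two sectors as a function of $\sgn(\ag)$. The claim is that the sectors are disjoint precisely when $\ag>0$, and overlap when $\ag<0$ (the degenerate case $\ag=0$ being excluded since it corresponds to non-transversality by the previous lemma).

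The main obstacle I anticipate is the bookkeeping of orientations and signs: correctly identifying which connected arc of $S_X$ and $S_Y$ borders $\s^{us}$ versus $\s^{ss}$, and then tracking how $\phi_X$ permutes these regions, so that the final inequality comes out as $\ag>0$ rather than $\ag<0$. Since $g(y)=\er(y^2)$ is higher order, the tangency/transversality analysis is governed entirely by the linear parts, so once the regions are correctly labeled the conclusion follows from comparing the two tangent lines; the delicate point is purely the consistent choice of signs dictated by the parabolic (invisible-visible) type and the normal form \ref{FFNF}. I would therefore carry out the sign analysis carefully at the start and then let the linearized sector comparison finish the proof.
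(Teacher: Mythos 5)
Your strategy is exactly the paper's: compute $\phi_{X}(S_{Y})=\{(g(y)-2\ag y,-y)\}$ in normal coordinates, compare its tangent line $\mathrm{span}\{(-2\ag,-1)\}$ at the origin with $T_{0}S_{Y}=\mathrm{span}\{(0,1)\}$ and $T_{0}S_{X}=\mathrm{span}\{(1,0)\}$, and decide the lemma by a sector comparison. The genuine gap is that the one step you explicitly defer --- the sign bookkeeping --- is the entire content of the lemma, and the direction you assert (``sectors disjoint precisely when $\ag>0$'') comes out on the wrong side when the deferred analysis is actually performed. In the invisible-visible case $\dg=-1$ and $\cg<0$, so $Xf=-y$ and $Yf=x+\er(|(x,y)|^{2})$ on $\s$; hence $\s^{ss}=\{y>0,\ x>g(y)\}$ (first quadrant) and $\s^{us}=\{y<0,\ x<g(y)\}$ (third quadrant). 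The branch of $\phi_{X}(S_{Y})$ in the upper half-plane is $\{(g(y)-2\ag y,-y):\ y<0\}$, with tangent direction $(2\ag,1)$, and this direction lies strictly inside the sector bounded by $S_{X}\cap\{x>0\}$ and the upper branch of $S_{Y}$ exactly when $\ag>0$. Consequently, for $\ag>0$ the strip of $\s^{us}$ adjacent to $S_{Y}$ is carried by $\phi_{X}$ into $\s^{ss}$, so $\phi_{X}(\s^{us})\cap\s^{ss}\neq\emptyset$; for $\ag<0$ one gets $\phi_{X}(S_{Y})\subset\s^{c}$ and hence $\phi_{X}(\s^{us})\subset\s^{c}$, so the intersection is empty. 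A direct check: the point $(-K\e^{2},-\e)\in\s^{us}$ (with $K$ large, $\e>0$ small) maps to $(2\ag\e-K\e^{2},\e)$, which lies in $\s^{ss}$ for small $\e$ if and only if $\ag>0$.

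Note that the printed statement of the lemma carries this same sign slip, and you have reproduced it rather than caught it: the paper's own proof concludes ``$\phi_{X}(S_{Y})\subset\s^{s}$ if and only if $\ag>0$'' and then observes that in that case a region of $\s^{us}$ is carried into $\s^{ss}$, the remark immediately following the lemma states that orbits of $X$ connect distinct sliding points if and only if $\ag>0$, and condition (4) of Proposition~\ref{vis-inv} (requiring $\ag_{0}+\bg_{0}\neq 0$ only when $\ag_{0}>0$) is coherent only if connections occur for $\ag>0$. So the correct conclusion of your sector comparison is $\phi_{X}(\s^{us})\cap\s^{ss}=\emptyset$ if and only if $\ag<0$ (with $\ag=0$ degenerate, as higher-order terms in $g$ then decide and transversality of $\phi_{X}(S_{Y})$ with $S_{Y}$ fails). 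Since your proposal postpones precisely the computation that determines this inequality and then asserts the opposite direction, it would not survive being carried out as written; the fix is simply to do the quadrant labeling first, as you yourself anticipated, and let it overrule the stated inequality.
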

\begin{proof}	
	In fact, in these coordinates, $S_{Y}=\{(g(y),y);\ y\in (-\e,\e)\}$, and $\phi_{X}(S_{Y})=\{(g(y)-2\ag y,-y);\ y\in (-\e,\e)\}$, for some $\e>0$, where $g$ is a smooth function with $g(y)=\er(y^{2})$.
	
	Therefore, $T_{0}\phi_{X}(S_{Y})=\textrm{span}\{(-2\ag,-1)\}$. The sliding region $\s^{s}$ is the region delimited by $S_{X}$ and $S_{Y}$.
	
	Since $T_{0}S_{Y}=\textrm{span}\{(0,1)\}$ and $T_{0}S_{X}=\textrm{span}\{(1,0)\}$, it follows that $\phi_{X}(S_{Y})\subset \s^{s}$ if and only if $\ag>0$.
	
	We conclude the proof by noticing that, if $\phi_{X}(S_{Y})\subset \s^{c}$, then $\phi_{X}(\s^{us})\subset \s^{c}$. Nevertheless, if $\phi_{X}(S_{Y})\subset \s^{s}$, then the region delimited by $S_{Y}$ and $\phi_{X}(S_{Y})$ in $\s^{us}$ is carried into the region delimited by $S_{Y}$ and $\phi_{X}(S_{Y})$ in $\s^{ss}$.
\end{proof}

\begin{remark}
	In another words, there exist orbits of $X$ in $M^{+}$ connecting distinct points in the sliding region $\s^{s}$ if and only if $\ag>0$.
\end{remark}

\begin{definition}
	If $\phi:\s\rightarrow\s$ is a diffeomorphism and $F$ is a vector field in $\s$, then define the \textbf{reflected vector field of $F$ by $\phi$} as $\phi^{*}F$.
\end{definition}
\begin{remark}
	The reflected vector field of $F$ by $\phi$ can also be referred as \textbf{transport of $F$ by $\phi$}. 
\end{remark}	

\begin{lemma}
	Let $Z=(X,Y)\in\Or$ be a nonsmooth vector field having an invisible-visible fold-fold point at $p$ such that $S_{X}\pitchfork S_{Y}$ at $p$.  Let $(\ag,\bg,\cg)$ be the normal parameters of $Z$ at $p$. 
	
	Assume that there exist a region $S\subset \s^{us}$ such that $\widetilde{S}=\phi_{X}(S)\subset \s^{ss}$, and suppose that $S$ is maximal with respect to this property. If $2(\ag+\bg)(\ag\bg-\cg)\neq 0$, then $F_{Z}^{N}$ and the transport of $F_{Z}^{N}$ by $\phi_{X}$ are transversal vector fields defined in $\tilde{S}$.
\end{lemma}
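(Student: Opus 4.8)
The plan is to reduce transversality of the two vector fields to the non-vanishing of a single determinant, compute its leading term explicitly, and then control that term using the wedge geometry of $\widetilde{S}$. First I would record the relevant normal-form data. In the invisible--visible case $\dg=\sgn(X^{2}f(p))=-1$ and $\sgn(\cg)<0$, so by Proposition \ref{SVF_FF} the normalized sliding vector field is
$$F_{Z}^{N}(x,y)=A\begin{pmatrix}x\\ y\end{pmatrix}+\er(|(x,y)|^{2}),\qquad A=\begin{pmatrix}\ag & \cg\\ 1 & \bg\end{pmatrix},$$
while by Lemma \ref{forma-inv} the involution is exactly linear, $\phi_{X}(x,y)=(x-2\ag y,-y)$, with matrix $P=\begin{pmatrix}1 & -2\ag\\ 0 & -1\end{pmatrix}$ and constant derivative $D\phi_{X}=P$. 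Since $\phi_{X}$ is an involution, its pushforward and pullback coincide, so the transport of $F_{Z}^{N}$ by $\phi_{X}$ is $\phi_{X}^{*}F_{Z}^{N}(q)=P\,F_{Z}^{N}(Pq)=PAP\,q+\er(|q|^{2})$, writing $q=(x,y)$.

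Two planar vector fields are transversal at $q$ exactly when $\det\bigl(F_{Z}^{N}(q),\phi_{X}^{*}F_{Z}^{N}(q)\bigr)\neq 0$. Both fields vanish at $p=0$, so this determinant is $\er(|q|^{2})$ and its quadratic leading term is $\det(Aq,PAP\,q)$. The key computation — routine but where all the content sits — is that after forming
$$PAP=\begin{pmatrix}-\ag & 2\ag^{2}+2\ag\bg-\cg\\ -1 & 2\ag+\bg\end{pmatrix},$$
the coefficients of $x^{2}$ and of $xy$ in $\det(Aq,PAP\,q)$ cancel identically, leaving
$$\det\bigl(Aq,PAP\,q\bigr)=-2(\ag+\bg)(\ag\bg-\cg)\,y^{2}.$$
Thus the hypothesis $2(\ag+\bg)(\ag\bg-\cg)\neq 0$ is precisely the condition that makes this leading term nonzero whenever $y\neq 0$.

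It then remains to control $y$ and the higher-order remainder on $\widetilde{S}$. In the normal coordinates $Xf=\dg y=-y$ and $Yf=x+\er(|(x,y)|^{2})$, so $\s^{ss}=\{x>0,\ y>0\}$ to leading order; hence $\widetilde{S}\subset\s^{ss}$ is the wedge bounded by $S_{Y}$ (tangent to $(0,1)$) and $\phi_{X}(S_{Y})$ which, reparametrized by $s=-y>0$ and using $\ag>0$ from Lemma \ref{IV}, is tangent to $(2\ag,1)$ inside the first quadrant. Both bounding directions have strictly positive second component, so $\widetilde{S}$ is bounded away from $\{y=0\}$: there is $c>0$ with $y\ge c|q|$ for all $q\in\widetilde{S}$. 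Consequently the leading term has modulus at least $2|(\ag+\bg)(\ag\bg-\cg)|\,c^{2}|q|^{2}$, which dominates the $\er(|q|^{3})$ remainder on a sufficiently small representative of the germ, so the determinant is nonzero at every point of $\widetilde{S}$, as claimed.

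The main obstacle I anticipate is not the algebra itself but the domination step: because the leading form $-2(\ag+\bg)(\ag\bg-\cg)y^{2}$ is degenerate along $\{y=0\}$, one cannot merely invoke "leading term nonzero" — if $\widetilde{S}$ approached the $x$-axis tangentially, the $y^{2}$ term would be comparable to the cubic error and the argument would collapse. The geometric fact that both boundary curves $S_{Y}$ and $\phi_{X}(S_{Y})$ enter the first quadrant along directions with positive $y$-slope (hence $y\ge c|q|$ on $\widetilde{S}$) is exactly what rescues the estimate, and establishing this cleanly is the delicate part of the proof.
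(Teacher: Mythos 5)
Your proposal is correct, and it shares the paper's skeleton: both proofs reduce transversality to the non-vanishing of the determinant $D(q)=\det\bigl(F_{Z}^{N}(q),\phi_{X}^{*}F_{Z}^{N}(q)\bigr)$, both exploit the exact linearity of $\phi_{X}(x,y)=(x-2\ag y,-y)$ in normal coordinates, and both arrive at the identical leading coefficient $-2(\ag+\bg)(\ag\bg-\cg)y^{2}$ (your matrix computation of $PAP$ and the cancellation of the $x^{2}$ and $xy$ terms matches the paper's computation exactly). Where you genuinely diverge is the treatment of the higher-order remainder, which is precisely the step you flag as delicate. You dominate the $\er(|q|^{3})$ error by a conical estimate: $\widetilde{S}$ is the wedge between $S_{Y}$ (tangent to $(0,1)$) and $\phi_{X}(S_{Y})$ (tangent to $(2\ag,1)$, with $\ag>0$ forced by the existence of $S$), so $y\geq c|q|$ on $\widetilde{S}$ and the quadratic term wins on a small representative of the germ. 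The paper instead factors the full determinant: it writes $D(x,y)=y^{2}[-2(\ag+\bg)(\ag\bg-\cg)+P_{1}(x,y)]+yP_{2}(x,y)$, observes that $P_{2}(x,0)\equiv 0$ (structurally this holds because along $S_{X}$ the normalized sliding field is parallel to $X$ and $\phi_{X}$ reverses that direction, so the two fields are antiparallel there and $D(x,0)\equiv 0$), and then uses a division argument (invoking Malgrange's Preparation Theorem) to get $D(x,y)=[-2(\ag+\bg)(\ag\bg-\cg)+\er(|q|)]\,y^{2}$. Hence in the paper $D$ vanishes near $p$ \emph{exactly} on $\{y=0\}$, and transversality holds at every point off $S_{X}$ — in particular on $S\cup\widetilde{S}$ — with no non-tangentiality input whatsoever. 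The trade-off: your route is more elementary (no preparation theorem) but depends on the wedge geometry and, as you correctly anticipate, would collapse if $\widetilde{S}$ approached the $x$-axis tangentially; the paper's factorization shows the degeneracy along $\{y=0\}$ is exact rather than merely leading-order, so the worry you confront with the estimate $y\geq c|q|$ is dissolved rather than resolved. Both arguments are complete proofs of the lemma as stated.
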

\begin{proof}
	Consider $F_{0}=F_{Z}^{N}$ and $F_{1}=\phi^{*}F_{Z}^{N}$, where $\phi_{X}$ is the involution associated to $X$.
	
	Clearly, $F_{0}$ and $F_{1}$ are transversal at $q\in\s$ if and only if $F_{0}(q)$ and $F_{1}(q)$ are linearly independent vectors. 
	
	Considering the normal coordinates $(x,y,z)$ at $p$, define the following function: 
	\begin{equation}\label{det}
	D(x,y)=\det\left(\begin{array}{c}
	F_{0}(x,y)\\
	F_{1}(x,y)
	\end{array}\right).
	\end{equation}
	
	Notice that  $D(x,y)\neq 0$ if and only if $F_{0}$ and $F_{1}$ are transversal at $(x,y)$.
	
	Now, we use the expressions of the vector field in these coordinates to derive an approximation for the function $D$.

	Since $\phi_{X}$ is a linear involution, it follows that $\phi_{X}^{-1}=\phi_{X}$ and $d\phi_{X}=\phi_{X}$, therefore:
	
	\begin{equation}
	\begin{array}{lcl}
	F_{1}(x,y) &=& d\phi_{X}(F_{Z}^{N}(\phi_{X}^{-1}(x,y)))\\
	&=& \phi_{X}(F_{Z}^{N}(\phi_{X}(x,y)))\\
	\end{array}
	\end{equation}
	
	In order to compute $D$, we must analyze the influence of the higher order terms in the computation of $F_{Z}^{N}$. From Proposition \ref{FFNF_prop}, we have that:
	
	\begin{equation}\label{NFIV2}
	X(x,y,z)=\left(\begin{array}{c}
	\alpha\\
	1\\
	-y
	\end{array}\right)\textrm{ and }Y(x,y,z)=\left(\begin{array}{c}
	\gamma + \widetilde{F}(x,y,z)\\
	\beta + \widetilde{G}(x,y,z)\\
	x + \widetilde{H}(x,y,z)
	\end{array}\right),
	\end{equation}
	where $\widetilde{F}(x,y,z)=\er(|(x,y,z)|)$, $\widetilde{G}(x,y,z)=\er(|(x,y,z)|)$ and $\widetilde{H}(x,y,z)=\er(|(x,y,z)|^{2})$.
	
	Hence, the sliding vector field is given by:
	$$	F_{Z}^{N}(x,y)=\left(\begin{array}{cc}
	\ag & \cg\\
	1 & \bg
	\end{array}\right)\cdot\left(\begin{array}{c}
	x\\
	y
	\end{array}\right) +\left(\begin{array}{c}
	\ag H(x,y) + y F(x,y)\\
	H(x,y)+ yG(x,y)
	\end{array}\right),$$	
	where $F(x,y)=\widetilde{F}(x,y,0)=\er(|(x,y)|)$, $G(x,y)=\widetilde{G}(x,y,0)=\er(|(x,y)|)$ and $H(x,y)=\widetilde{H}(x,y,0)=\er(|(x,y)|^{2})$.
	
	Using the expression of $F_{Z}^{N}$ and $\phi_{X}(x,y)(x-2\ag y,-y)$, we obtain:	
	$$D(x,y)= y^{2}[-2(\ag+\bg)(\ag\bg-\cg) +P_{1}(x,y)]+yP_{2}[x,y],$$
	where $P_{1}(x,y)=\er(|(x,y)|)$ and $P_{2}=\er(|(x,y)|)$.
	
	In addition, $P_{2}(x,0)\equiv 0$, hence we can use Malgrange Preparation Theorem to find a smooth function $P_{3}$, such that $P_{2}(x,y)=yP_{3}(x,y)$ and $P_{3}(x,y)=\er(|(x,y)|)$.
	
	With this, we conclude that 
	
	$$D(x,y)=[-2(\ag+\bg)(\ag\bg-\cg)+ P(x,y)]y^{2},$$
	where $P(x,y)=\er(|(x,y)|)$.
	
	
	Now, if $(\ag+\bg)(\ag\bg-\cg)\neq 0$, then the $x$-axis is the only solution of $D(x,y)=0$, near the origin. Therefore the vector fields  $F_{0}$ and $F_{1}$ are transversal in the region $S\cup\widetilde{S}$, since it does not contain points of the $x$-axis.
	
\end{proof}

\begin{remark}
	Notice that, in the curves $\ag+\bg=0$ and $\ag\bg=\cg$, the higher order terms may produce curves in $S\cup\widetilde{S}$ where the vector fields are not transversal, and they can be broken by small perturbations (making $\ag+\bg\neq 0$ or $\ag\bg\neq\cg$). Clearly, this situation imply in the instability of the system.
\end{remark}

\begin{lemma}
	Let $Z=(X,Y)\in\Or$ be a nonsmooth vector field having an invisible-visible fold-fold point at $p$ such that $S_{X}\pitchfork S_{Y}$ at $p$.  Let $(\ag,\bg,\cg)$ be the parameters given by Proposition \ref{FFNF_prop} associated to $Z$ at $p$.	If $2\ag(\ag+\bg)-\cg\neq0$, then $F_{Z}^{N}$ is transversal to $\phi_{X}(S_{Y})$ in $\s^{s}$.
\end{lemma}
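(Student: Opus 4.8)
The plan is to reduce the transversality assertion to the non-vanishing of a single determinant along an explicit parametrization of $\phi_{X}(S_{Y})$, in the same spirit as the preceding lemma on $F_{Z}^{N}$ and its transport. First I would recall from Corollary \ref{formaS} that in the normal coordinates the fold line of $Y$ is $S_{Y}=\{(g(y),y):\ y\in(-\e,\e)\}$ with $g(y)=\er(y^{2})$, and that the involution attached to the invisible fold of $X$ is completely explicit, $\phi_{X}(x,y)=(x-2\ag y,-y)$. Composing these, I obtain the parametrization of the image curve
\[
\gamma(y)=(g(y)-2\ag y,\,-y),\qquad \gamma'(y)=(g'(y)-2\ag,\,-1),
\]
whose tangent at the origin is spanned by $(-2\ag,-1)$, in agreement with the value of $T_{0}\phi_{X}(S_{Y})$ computed earlier.

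Next I would evaluate the normalized sliding vector field along $\gamma$. In the invisible--visible case one has $\dg=\sgn(X^{2}f(p))=-1$, so Proposition \ref{SVF_FF} (equivalently, the full expansion used above) gives
\[
F_{Z}^{N}(x,y)=\begin{pmatrix}\ag & \cg\\ 1 & \bg\end{pmatrix}\begin{pmatrix}x\\ y\end{pmatrix}+\er(|(x,y)|^{2}).
\]
Substituting $x=g(y)-2\ag y=-2\ag y+\er(y^{2})$ and simplifying yields
\[
F_{Z}^{N}(\gamma(y))=\begin{pmatrix}-(2\ag^{2}+\cg)\,y\\ -(2\ag+\bg)\,y\end{pmatrix}+\er(y^{2}),
\]
where I use that along $\gamma$ one has $|(x,y)|=\er(y)$, so every quadratic remainder of $F_{Z}^{N}$ is genuinely $\er(y^{2})$ and the linear correction $g'(y)=\er(y)$ in $\gamma'$ is the only source of higher-order terms in the tangent.

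The key step is then to form the transversality determinant
\[
D(y)=\det\begin{pmatrix}F_{Z}^{N}(\gamma(y))\\ \gamma'(y)\end{pmatrix},
\]
which vanishes precisely when $F_{Z}^{N}(\gamma(y))$ is tangent to $\phi_{X}(S_{Y})$ (and in particular whenever $F_{Z}^{N}(\gamma(y))=0$). Expanding the determinant and collecting the coefficient of $y$ gives
\[
D(y)=-\bigl(2\ag(\ag+\bg)-\cg\bigr)\,y+\er(y^{2}).
\]
Hence, if $2\ag(\ag+\bg)-\cg\neq 0$, the linear coefficient is nonzero and $D(y)\neq 0$ for all sufficiently small $y\neq 0$. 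This shows both that $F_{Z}^{N}$ does not vanish along $\gamma$ away from $p$ and that it is transverse to $\phi_{X}(S_{Y})$ at every point of the curve lying in $\s^{s}$, which is exactly the claim.

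The only point requiring a little care is the bookkeeping of the higher-order terms in $D$: one must verify that the quadratic remainders of $F_{Z}^{N}$ and the $\er(y)$ correction of $\gamma'$ do not contribute to the coefficient of $y$. This is immediate here because the leading part of $F_{Z}^{N}(\gamma(y))$ is already linear in $y$ while the remainders are $\er(y^{2})$, so --- unlike the companion lemma, where the determinant began at order $y^{2}$ and forced an appeal to the Malgrange Preparation Theorem --- the first-order term alone settles the matter. The fact that the obstruction turns out to be exactly $2\ag(\ag+\bg)-\cg$ confirms the stated genericity condition.
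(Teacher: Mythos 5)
Your proof is correct and follows essentially the same route as the paper: evaluate $F_{Z}^{N}$ along the image curve $\phi_{X}(S_{Y})$, pair with the (perpendicular of the) tangent vector, and read off the first-order coefficient $\pm\bigl(2\ag(\ag+\bg)-\cg\bigr)y+\er(y^{2})$, which is nonzero for small $y\neq 0$, i.e.\ on $\s^{s}$. If anything, you are slightly more careful than the paper, which replaces $\phi_{X}(S_{Y})$ by its tangent line $\gamma(y)=(-2\ag y,-y)$ and asserts the reduction suffices, whereas you keep the $g'(y)=\er(y)$ correction in $\gamma'$ and verify explicitly that it cannot affect the linear term.
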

\begin{proof}
	In the coordinates of Proposition \ref{FFNF_prop}, we have that $S_{Y}=\{(g(y),y,0);\ y\in(-\e,\e)\}$, for $\e>0$ sufficiently small, where $g$ is a $\Cr$ function such that $g(y)=\er(y^{2})$.
	
	Therefore  $\phi_{X}(S_{Y})=\{(g(y)-2\ag y,-y);\ y\in (-\e,\e)\}$. Since $\phi_{X}(S_{Y})$ is tangent to the curve $\gamma(y)=(-2\ag y,-y)$ at the origin, it is sufficient to prove that $F_{Z}^{N}$ is transversal to $\gamma$.
	
	Clearly, $F_{Z}^{N}$ is transversal to $\gamma$ at $\gamma(y)$ if and only if:
	
	\begin{equation}\label{T}
	T(y)=F_{Z}^{N}(\gamma(y))\cdot(\gamma'(y))^{\perp}\neq 0.
	\end{equation}	
	
	Now, we use the expression of $F_{Z}^{N}$ in these coordinates to obtain an approximation of $T$. In fact, 	
	$$F_{Z}^{N}(\gamma(y))=F_{Z}^{N}(-2\ag y, -y)=(-2\ag^{2}y- \cg y, -2\ag y-\bg y)+ \er(y^{2})$$
	and
	$$(\gamma'(y))^{\perp}= (-2\ag,-1)^{\perp}=(1,-2\ag).$$	
	
	Substituting these expressions in \ref{T}, we obtain:
	$$T(y)=[2\ag(\ag+\bg)-\cg]y+ \er(y^{2})$$
	
	Therefore, if the condition $2\ag(\ag+\bg)-\cg\neq 0$ is assumed and $y\neq 0$ then $F_{Z}^{N}$ is transversal to $\phi_{X}(S_{Y})$. Since $\s^{s}$ does not contain points where $y\neq0$ (because they belong to $S_{X}$), the result follows.
\end{proof}

\begin{remark}
	In the curve $2\ag(\ag+\bg)-\cg=0$, the higher order terms can be used to produce a curve such that $F_{Z}^{N}$ is tangent to $\p_{X}(S_{Y})$ in every point. Such structurally unstable phenomena have to be avoided.
\end{remark}

\begin{proposition}\label{vis-inv}
	Let $Z_{0}=(X_{0},Y_{0})\in\Or$ be a nonsmooth vector field having an invisible-visible fold-fold point at $p$ such that $S_{X_{0}}\pitchfork S_{Y_{0}}$ at $p$. Let $(\ag_{0},\bg_{0},\cg_{0})$ be the normal parametersof $Z_{0}$ at $p$. Then, $Z_{0}$ is locally structurally stable at $p$ if and only if:
	\begin{enumerate}
		\item $(\ag_{0},\bg_{0},\cg_{0})\in \displaystyle\cup_{i=1}^{4} R^{i}_{P}$;
		\item $\ag_{0}\neq 0$;
		\item $2\ag_{0}(\ag_{0}+\bg_{0})-\cg_{0}\neq0$;
		\item $\ag_{0}+\bg_{0}\neq 0$, if $\ag_{0}>0$.
	\end{enumerate}
	Moreover, there exist only eleven topologically distinct classes of local structural stable systems at invisible-visible fold-fold points.
\end{proposition}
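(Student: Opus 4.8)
The plan is to recognize conditions (1)--(4) as precisely the translation of the stability condition $\s(P)$ into the normal parameters: being in $\cup_{i=1}^{4}R_{P}^{i}$ says that $F_{Z_{0}}^{N}$ is transient or has a hyperbolic singularity (Claims 4--7), condition (2) is $\phi_{X}(S_{Y})\pitchfork S_{Y}$ (the first lemma of this subsection), condition (3) is $\phi_{X}(S_{Y})\pitchfork F_{Z_{0}}^{N}$ (the lemma on $2\ag(\ag+\bg)-\cg$), and condition (4) is the transport transversality $\s(P)(2)$ (the lemma on $2(\ag+\bg)(\ag\bg-\cg)$). Necessity is then immediate from mechanisms already recorded: if (1) fails, $(\ag_{0},\bg_{0},\cg_{0})$ lies in none of the $R_{P}^{i}$ and Claim 8 gives a sliding bifurcation; if (2) fails ($\ag_{0}=0$), $\phi_{X}(S_{Y})$ is tangent to $S_{Y}$ at $p$ and a perturbation breaks the contact; if (3) fails, the remark after the $2\ag(\ag+\bg)-\cg$ lemma produces a tangency curve between $F_{Z_{0}}^{N}$ and $\phi_{X}(S_{Y})$ that is destroyed by perturbation; and if (4) fails with $\ag_{0}>0$, then $\s^{ss}\cap\phi_{X}(\s^{us})\neq\emptyset$ (Lemma \ref{IV} and its remark) and $F_{Z_{0}}^{N}$, $\phi_{X}^{*}F_{Z_{0}}^{N}$ are non-transversal there. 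Each case forces local structural instability.

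For sufficiency, assume (1)--(4). As these are open conditions, the Implicit Function Theorem and continuous dependence furnish a neighborhood $\V$ of $Z_{0}$ so that every $Z\in\V$ has an invisible-visible fold-fold point $q(Z)\approx p$ whose normal parameters satisfy (1)--(4) and lie in the same $R_{P}^{i}$. I would then build the equivalence in two stages. On $\s$: by Lemma \ref{sliding} and condition (1) the fields $F_{Z_{0}}^{N}$ and $F_{Z}^{N}$ share one of the topological types of Claims 4--7, so one constructs an order-preserving homeomorphism $h:\s\to\s$ matching the sliding foliations and carrying $S_{X_{0}},S_{Y_{0}}$ onto $S_{X},S_{Y}$; conditions (2) and (3) make the relative position of $\phi_{X}(S_{Y})$ rigid, hence matchable. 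Off $\s$: one extends $h$ into $M^{\pm}$ along the flows of $X$ and $Y$, exactly as in the hyperbolic case (Proposition \ref{vis}) via the flow-box argument of \cite{F}.

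The hard part will be the overlap $\s^{ss}\cap\phi_{X}(\s^{us})$ arising when $\ag_{0}>0$. There two transversal foliations coexist, namely $F_{Z}^{N}$ and its transport $\phi_{X}^{*}F_{Z}^{N}$, and the homeomorphism must respect both simultaneously while intertwining $\phi_{X_{0}}$ with $\phi_{X}$. This is where condition (4) is decisive: two transversal foliations on a surface form a $2$-web, which is a stable configuration admitting a topological normal form, in sharp contrast with the $3$-web at a T-singularity whose instability drove Proposition \ref{webproof}. Concretely, I would fix $h$ on a fundamental region transverse to the action of $\phi_{X}$, propagate it by $\phi_{X}$, and verify continuity at $p$ and along $S_{Y}$, $\phi_{X}(S_{Y})$; once this is done, the extension off $\s$ is routine.

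The count of eleven classes follows from a finite case analysis. Conditions (1)--(4) cut the admissible set into open cells indexed by the sliding type $R_{P}^{1},\dots,R_{P}^{4}$, refined by the sign of $\ag_{0}$ (presence of $X$-connections, condition (2)), the sign of $2\ag_{0}(\ag_{0}+\bg_{0})-\cg_{0}$ (condition (3)), and, when $\ag_{0}>0$, the sign of $\ag_{0}+\bg_{0}$ (condition (4)). Tabulating which sign patterns are realizable inside each $R_{P}^{i}$ and collapsing those that yield the same phase portrait up to $\s$-equivalence leaves exactly eleven types; checking realizability and the pairwise non-equivalence of the surviving cells is the bookkeeping-heavy but routine final step.
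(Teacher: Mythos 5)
Your proposal is correct and follows essentially the same route as the paper: necessity via the bifurcation mechanisms recorded in the preceding lemmas and remarks (sliding bifurcations for (1), tangency of $\phi_{X}(S_{Y})$ with $S_{Y}$ for (2), tangency with $F_{Z}^{N}$ for (3), and degeneracy of the transported field on $\s^{ss}\cap\phi_{X}(\s^{us})$ for (4), where $\ag\bg-\cg\neq0$ is already supplied by (1)), and sufficiency by building $h$ on $\s$ matching the sliding foliations, handling the connection region via the pair of transversal foliations $F_{Z}^{N}$ and $\phi_{X}^{*}F_{Z}^{N}$ so that $h\circ\phi_{X_{0}}=\phi_{X}\circ h$, then extending to $M^{\pm}$ along the flows as in the hyperbolic case. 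Your explicit framing of the connection region as a stable $2$-web (in contrast with the unstable $3$-web of Proposition \ref{webproof}) is exactly the mechanism the paper uses implicitly when it takes the orbits of the two transversal fields as a coordinate system on $S_{0}$, and your treatment of the eleven-class count by sign enumeration over the $R_{P}^{i}$ matches the level at which the paper asserts it.
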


\begin{proof}[Outline]
	Proceeding as is the proof of Theorem \ref{vis}. Consider the neighborhood $\V$ of $Z_{0}$ such that the correspondent parameters $(\ag,\bg,\cg)$ of any $Z\in\V$ are in the same region of $(\ag_{0},\bg_{0},\cg_{0})$.
	
	Let $Z=(X,Y)\in\V$. If there is no orbits of $X$ connecting points of $\s^{ss}$ and $\s^{us}$, then the proof can be done in the following way. We omit some details in this case, since it is very similar to the visible case.
	\begin{itemize}
		\item Construct $h: \s^{s}(Z_{0})\rightarrow \s^{s}(Z)$ carrying orbits of $F_{0}$ onto orbits of $F_{Z}$. In addition extend it to $S_{X_{0}}\cup S_{Y_{0}}$ via limit. Hence $h(S_{X_{0}})=S_{X}$ and $h(S_{Y_{0}})=S_{Y}$;
		\item For each $p\in \s\setminus S_{X_{0}}$, there exists $t_{0}(p)\neq 0$ such that $\p_{X_{0}}(t_{0}(p),p)\in \s$. Similarly, there exists $t(p)\neq 0$ for the vector field $X$;
		\item If $p\in \s^{s}$, then $h(p)$ is already defined. Assume that $p\in \s^{c}$. If $\p_{X_{0}}(t_{0}(p),p)\in \s^{s}$, then define:
		$$h(p)= \p_{X}(-t(\p_{X_{0}}(t_{0}(p),p)), h(\p_{X_{0}}(t_{0}(p),p))).$$
		\item Using Tietze Extension Theorem, we can extend $h$ over $\s^{c}$;
		\item Now, using the same idea of the third item, we can extend it to the whole $\s$;
		\item Extend it to $M^{+}$ using the flow of $X_{0}$, $X$ and $h:\s\rightarrow \s$;
		\item Following the same idea of the hyperbolic case, extend it to $M^{-}$;
		\item Hence we construct a germ of homeomorphism $h:M\rightarrow M$ at $p$, with $h(p)=q(Z)$, which is an equivalence between $Z_{0}$ and $Z$. Then $Z_{0}$ is locally structurally stable at $p$.	 
	\end{itemize}	
	
	Suppose that there exists a connection between $\s^{ss}$ and $\s^{us}$ for $Z_{0}$ and $Z$. Denote by $S_{0}$ and $S$, the regions of $\s^{s}$ exhibiting connections.	
	
	From the previous Lemmas of this subsection, it is possible to say that $F_{0}$ and $\phi_{X_0}^{*}F_{0}$ are transversal in each point of $S_{0}$, and the same works for $F_{Z}$ and $\phi_{X}^{*}F_{Z}$ in $S$.
	
	Therefore, the orbits of $F_{0}$ and $\phi_{X_0}^{*}F_{0}$ define a coordinate system in $S_{0}$, such as the orbits of $F_{Z}$ and $\phi_{X}^{*}F_{Z}$ in $S$.
	
	Hence, let $h$ be a function carrying $S_{Y_{0}}$ onto $S_{Y}$, and $h(0)=0$. Now we can use these coordinate systems to extend $h:S_{0}\rightarrow S$. Moreover, it satisfies:	
	$$h\circ\phi_{X_{0}}=\phi_{X}\circ h.$$
	
	By the transversality of $F_{0}$ to $\phi_{X_0}(S_{Y_{0}})$ (resp. $F_{Z}$ to $\phi_{X}(S_{Y})$), it is possible to extend $h$ on $\s^{s}(Z_{0})$ using the sliding orbits. Then we have a homeomorphism $h:\s^{s}(Z_{0})\rightarrow \s^{s}(Z)$ carrying sliding orbits onto sliding orbits.
	
	By construction, if $x\in S$, then $\phi_{X}(h(x))=h(\phi_{X_0}(x))$. With this, we can use the same idea from the previous case without connections to extend such map to a germ of homeomorphism $h:M\rightarrow M$ at $p$, with $h(p)=q(Z)$, which is a topological equivalence between $Z_{0}$ and $Z$ at $p$. 	
\end{proof}	

\subsection{Proof of Theorem B}

Notice that $Z$ satisfies condition $\s(H)$ at $p$ if, and only if, the normal parameters $(\ag,\bg,\cg)$ of $Z$ at $p$ satisfy the hypotheses of Proposition \ref{vis}.

Moreover, $Z$ satisfies condition $\s(P)$ at $p$ if, and only if, the normal parameters $(\ag,\bg,\cg)$ of $Z$ at $p$ satisfy the hypotheses of Proposition \ref{vis-inv}.

The result follows directly from Propositions \ref{vis}, \ref{vis-inv}. 

\subsection{Proof of Theorem C}
From Proposition \ref{vishik} it follows that $\s_{0}\subset \s(G)$.

The result follows from Theorem \ref{generic} and from Theorems A and B.

\subsection{Proof of Corollary E}
From the characterization of $\s_{0}$, we can see that $\s(G)$, $\s(R)$, $\s(H)$, $\s(P)$ are open dense sets in $\Or$.

Nevertheless, we also prove that $\s(E)$ is not residual in $\Or$. Therefore, it follows that $\s_{0}\cap \s(E)$ is open dense in $\s(E)$ and $\s(E)$ is the biggest set with this property.

\section{Acknowledgments}
This research has been partially supported by FAPESP Thematic Project (2012/18780-0) and FAPESP PhD Scholarship (2015/22762-5).

\bibliographystyle{amsplain}


\end{document}